\setlist[enumerate,1]{label={(\alph*)}}
\title{Toric supervarieties with one odd dimension}
\author{Eric Jankowski}
\date{}
\newcommand{\A}{\mathscr{A}}
\newcommand{\B}{\mathscr{B}}
\newcommand{\C}{\mathscr{C}}
\newcommand{\F}{\mathscr{F}}
\newcommand{\I}{\mathscr{I}}
\newcommand{\J}{\mathscr{J}}
\renewcommand{\O}{\mathscr{O}}
\renewcommand{\S}{\mathscr{S}}
\renewcommand{\AA}{\mathbb{A}}
\newcommand{\PP}{\mathbb{P}}
\newcommand{\NN}{\mathbb{N}}
\newcommand{\ZZ}{\mathbb{Z}}
\newcommand{\QQ}{\mathbb{Q}}
\newcommand{\RR}{\mathbb{R}}
\newcommand{\kk}{\mathbb{C}}
\newcommand{\tab}{\hspace{0.6cm}}
\renewcommand{\epsilon}{\varepsilon}
\newcommand{\m}{\mathfrak{m}}
\newcommand{\p}{\mathfrak{p}}
\newcommand{\h}{\mathfrak{h}}
\newcommand{\g}{\mathfrak{g}}
\renewcommand{\t}{\mathfrak{t}}
\renewcommand{\d}[1]{\frac{\partial}{\partial #1}}
\renewcommand{\tilde}{\widetilde}
\DeclareMathOperator{\Hom}{Hom}
\DeclareMathOperator{\Spec}{Spec}
\DeclareMathOperator{\Proj}{Proj}
\let\Im\undefined
\DeclareMathOperator{\Im}{Im}
\DeclareMathOperator{\Lie}{Lie}
\DeclareMathOperator{\Gr}{Gr}
\DeclareMathOperator{\GL}{GL}
\DeclareMathOperator{\Stab}{Stab}
\DeclareMathOperator{\Orb}{Orb}
\DeclareMathOperator{\Res}{Res}
\DeclareMathOperator{\Sat}{Sat}
\DeclareMathOperator{\Span}{Span}
\newtheorem{theorem}{Theorem}
\newtheorem{lemma}{Lemma}[section]
\newtheorem{corollary}[lemma]{Corollary}
\newtheorem{proposition}[lemma]{Proposition}
\theoremstyle{definition}
\newtheorem{example}[lemma]{Example}
\newtheorem{remark}[lemma]{Remark}
\newtheorem{definition}[lemma]{Definition}
\begin{document}

\begin{abstract}
    In this paper we describe the notion of a toric supervariety, generalizing that of a toric variety from the classical setting. We give a combinatorial interpretation of the category of quasinormal toric supervarieties with one odd dimension using decorated polyhedral fans. We then use this interpretation to calculate some invariants of these supervarieties and extract geometric information from them.
\end{abstract}

\subjclass[2020]{14A22, 14L30, 14M25, 14M30}
\keywords{algebraic supervarieties, algebraic supergroup actions, toric varieties}

\maketitle

\section{Introduction}
Toric varieties form a rich and interesting class of well-behaved examples in algebraic geometry. Many important varieties are toric: affine spaces, projective spaces, Hirzebruch surfaces, and countless more. Toric varieties (especially normal ones) can be described by purely combinatorial data, a feature that allows one to quickly understand important information about a given variety without significant computation. As a result, they provide a concise dictionary between algebraic geometry and polyhedral combinatorics.

It is natural to ask to what extent this story generalizes to the super setting. Of course, one first requires the notion of an algebraic supertorus. A suitable idea arises in \cite{GSS}, where it is said that a Lie superalgebra $\t$ is \textbf{quasitoral} if $[\t, \t_0]=0$. Maximal quasitoral subalgebras play the same role in the theory of Lie superalgebras that maximal toral subalgebras play in the ordinary setting. Following this analogy, an algebraic supertorus will be a supergroup whose underlying variety is a torus and whose Lie superalgebra is quasitoral.

This definition allows us to consider supervarieties with open dense orbits of supertori, which we find exhibit many similarities to classical toric varieties. One of the most important results in the classical theory gives an equivalence of categories between normal toric varieties and polyhedral fans. It is seen in this paper that a similar equivalence, using decorated fans, can be formulated in the super case with one odd dimension.

\subsection{Structure of paper}
In Section \ref{TVSection}, we describe the combinatorial approach to toric varieties, mainly following \cite{CLS}. In Section \ref{SupergeometrySection}, we outline the fundamentals of algebraic supergeometry, following \cite{CCF}. In Section \ref{SupertoriSection}, we study algebraic supertori and their classifying data.

In Sections \ref{ATSVSection} and \ref{QuasinormalSection}, we generalize much of the classical theory of toric varieties to the supergeometric setting with one odd dimension. This treatment includes several equivalent constructions of affine toric supervarieties (with one odd dimension) as well as a combinatorial description of the category of quasinormal toric supervarieties (with one odd dimension). Moreover, the Duflo-Serganova functor of \cite{DS} is used to provide an interesting invariant of these toric supervarieties.

As an application of these combinatorics, in Section \ref{GeometrySection}, we give criteria for splitness and smoothness of toric supervarieties. We also characterize orbits and stabilizers of points in toric supervarieties, and determine the conditions under which fiber products exist in the category of quasinormal toric supervarieties with one odd dimension.

\subsection{Acknowledgements}
The author is grateful to his advisor, Vera Serganova, for inspiring his interest in algebraic supergeometry and for many useful discussions about toric supervarieties. This material is based upon work supported by the National Science Foundation Graduate Research Fellowship Program under Grant No.\ 2146752.

\section{Toric varieties}\label{TVSection}

Our ground field is $\kk$, although we could just as well work over any algebraically closed field of characteristic 0.

\subsection{Algebraic tori}

\begin{definition}
    An \textbf{algebraic torus} is an algebraic group $T$ isomorphic to some $(\kk^\times)^n$, where $\kk^\times \cong \Spec \kk[x^{\pm 1}]$ is the multiplicative group of the complex numbers.
\end{definition}

A torus $T$ has both a \textbf{character lattice} $M = \Hom(T, \kk^\times)$ and a \textbf{cocharacter lattice} $N=\Hom(\kk^\times,T)$. These lattices are dual to each other, admitting a nondegenerate perfect pairing
\begin{align*}
    \langle\,,\rangle : M \times N \to \ZZ
\end{align*}
given by composing the morphisms of algebraic groups $\kk^\times \to T \to \kk^\times$. Given an arbitrary lattice $N$, one may define the torus $T_N := N \otimes_\ZZ \kk^\times$ and observe that $T_N$ has cocharacter lattice $N$ and character lattice $M=N^*$. The character lattice $M$ is naturally a basis for the regular functions on $T_N$, in the sense that the coordinate ring satisfies $\kk[T_N] = \kk[M]$.

If we fix an identification $N \cong \ZZ^n$, then we likewise obtain $M \cong \ZZ^n$ and $T_N \cong (\kk^\times)^n$, so that $\kk[T_N] \cong \kk[x_1^{\pm 1}, ..., x_n^{\pm 1}]$.

\subsection{Affine toric varieties}\label{ATVBackground}

\begin{definition}
    A \textbf{toric variety} is an (irreducible) algebraic variety $X$ containing a torus $T$ as a Zariski open subset such that the action of $T$ on itself extends to an algebraic action of $T$ on the whole variety.
\end{definition}

An affine toric variety may be constructed in a number of equivalent ways. Firstly, given a finite set $\A = \{m_1, ..., m_r\}$ in a lattice $M'$, one can define $N' = (M')^*$ and subsequently a morphism of varieties
\begin{align*}
    \Phi_{\A} : T_{N'} &\to \kk^r
\end{align*}
given by $\Phi_{\A}(t) = (t^{m_1}, ..., t^{m_r})$. The Zariski closure of the image $\Phi_{\A}(T_{N'}) \subseteq \kk^r$ becomes an affine toric variety $Y_{\A}$ with torus $\Phi_{\A}(T_{N'}) \cong T_{(\ZZ \A)^*}$.

Such subvarieties of $\kk^r$ may instead be characterized by their \textbf{toric ideals}, which are prime ideals of the form $$(x^{\ell_+} - x^{\ell_-} \mid \ell_+, \ell_- \in \NN^s \text{ and } \ell_+ - \ell_- \in L)$$ for some sublattice $L \subseteq \ZZ^r$. Equivalence with the prior construction may be established by viewing $\Phi_{\A}$ as a map $T_{N'} \to (\kk^\times)^r$ and defining $L$ as the kernel of the induced map $\bar \Phi_{\A}^* : \ZZ^r \to M$ on character lattices.

Equivalently, one may begin with an \textbf{affine semigroup}, i.e.\ a finitely generated semigroup $S$ that can be embedded in a lattice. The semigroup algebra $\kk[S]$ then becomes the coordinate ring of the toric variety $\Spec \kk[S]$ whose torus is $T_{(\ZZ S)^*}$. If $S = \NN \A$, then $\Spec \kk[S] \cong Y_{\A}$.

We observe that $T_N$ acts on monomials $x^m$ in its coordinate ring $\kk[T_N] = \kk[M]$ by the corresponding characters $m \in M$. Moreover, if $\Spec \kk[S]$ is an affine toric variety with torus $T_N$, then $\kk[S]$ is a $T_N$-invariant subalgebra of $\kk[M]$.

A \textbf{toric morphism} between toric varieties is one which restricts to a group homomorphism on their underlying tori. A toric morphism between affine toric varieties $\phi : \Spec \kk[S] \to \Spec \kk[S']$ is characterized by the fact that its dual map $\phi^* : \kk[S'] \to \kk[S]$ arises from a map of semigroups $S' \to S$.

\subsection{Normal toric varieties}\label{NTVBackground}

\begin{definition}
    An algebraic variety is \textbf{normal} if the local ring at every point is an integrally closed domain.
\end{definition}

Normal toric varieties can be described especially easily with combinatorial data. In order to obtain a normal affine toric variety $\Spec \kk[S]$, one requires that the semigroup $S$ be \textbf{saturated}, meaning that if $km \in S$ for $m \in \ZZ S$ and $k \geq 0$, then $s \in S$.

Such semigroups can also be characterized in terms of cones. Let $N$ be a lattice in a real vector space $N_\RR := N \otimes_\ZZ \RR$. A \textbf{cone} $\sigma$ in $N_\RR$ is a nonempty subset closed under addition and multiplication by nonnegative scalars. We say $\sigma$ is \textbf{polyhedral} if it is generated (under these operations) by finitely many points, and \textbf{rational} if these points can be taken to lie in $N$. Finally, $\sigma$ is \textbf{strongly convex} if it contains no positive-dimensional subspace of $N_\RR$.

Let $M = N^*$ be the dual lattice of $N$, so $M_\RR := M \otimes_\ZZ \RR$ is the dual vector space of $N_\RR$. The \textbf{dual cone} of $\sigma$ is the cone $\check \sigma = \{m \in M_\RR \mid \langle m, u \rangle \geq 0 \text{ for all } u \in \sigma\}$ in $M_\RR$. If $\sigma$ is a strongly convex rational polyhedral cone, then the lattice points $\check \sigma \cap M$ of $\check \sigma$ form a saturated affine semigroup called $S_\sigma$. This construction leads to the classic characterization of normal affine toric varieties.

\begin{proposition}
    Let $X$ be an affine toric variety with torus $T_N$. Then the following are equivalent:
    \begin{enumerate}
        \item $X$ is normal.
        \item $X \cong \Spec \kk[S]$ for a saturated affine semigroup $S \subseteq M = N^*$ such that $\ZZ S = M$.
        \item $X \cong X_\sigma := \Spec \kk [S_\sigma]$ for a strongly convex rational polyhedral cone $\sigma \subset N_\RR$.
    \end{enumerate}
\end{proposition}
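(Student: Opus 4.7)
The plan is to use the previously established reduction, namely that any affine toric variety with torus $T_N$ is isomorphic to $\Spec \kk[S]$ for some affine semigroup $S \subseteq M$ with $\ZZ S = M$. The proposition then reduces to two purely combinatorial/algebraic assertions: that normality of $\kk[S]$ is equivalent to saturation of $S$, and that saturated affine semigroups $S \subseteq M$ with $\ZZ S = M$ are in bijection with strongly convex rational polyhedral cones $\sigma \subset N_\RR$ via $\sigma \mapsto S_\sigma$.

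For $(1) \Leftrightarrow (2)$ I would identify the integral closure of $\kk[S]$ inside the Laurent polynomial ring $\kk[M]$, which is normal and shares a fraction field with $\kk[S]$. The claim is that $\overline{\kk[S]} = \kk[\Sat(S)]$. The inclusion $\kk[\Sat(S)] \subseteq \overline{\kk[S]}$ is immediate: if $km \in S$ then $x^m$ satisfies the monic equation $y^k - x^{km} = 0$. The reverse inclusion uses the $M$-grading on $\kk[M]$ coming from the $T_N$-action: any integral element $f = \sum c_m x^m$ satisfies a monic equation over $\kk[S]$ which, decomposed into $M$-homogeneous components, forces each individual monomial $x^m$ appearing in $f$ to be integral, and hence $m \in \Sat(S)$. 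Normality of $\kk[S]$ is therefore equivalent to $S = \Sat(S)$, i.e.\ to saturation.

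For $(2) \Leftrightarrow (3)$ the key tool is Gordan's lemma: for any rational polyhedral cone $\sigma \subset N_\RR$, the semigroup $S_\sigma = \check \sigma \cap M$ is finitely generated. The direction $(3) \Rightarrow (2)$ is then direct: $S_\sigma$ is saturated by construction (being the intersection of a cone with a lattice), and strong convexity of $\sigma$ is equivalent to $\check \sigma$ being full-dimensional in $M_\RR$, which gives $\ZZ S_\sigma = M$. For $(2) \Rightarrow (3)$: given a saturated $S$ with $\ZZ S = M$, set $\sigma = \Cone(S)^\vee \subset N_\RR$. Finite generation of $S$ makes $\Cone(S)$, and hence $\sigma$, rational polyhedral; the condition $\ZZ S = M$ forces $\Cone(S)$ to be full-dimensional, making $\sigma$ strongly convex; and $\check \sigma \cap M = \Cone(S) \cap M$ coincides with $S$ by saturation, since every lattice point of $\Cone(S)$ is a nonnegative rational combination of the finitely many generators of $S$, hence a positive integer multiple of it lies in $S$.

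The main technical obstacle is Gordan's lemma itself, whose standard proof takes a finite cone-generating set $\{m_1, \ldots, m_r\}$ of $\check \sigma \subseteq M_\RR$, forms the compact box $P = \{\sum t_i m_i : 0 \leq t_i \leq 1\}$, and uses the finiteness of $P \cap M$ to write an arbitrary lattice point of $\check \sigma$ as an $\NN$-combination of elements of $P \cap M$. A subtler point is the identification $\overline{\kk[S]} = \kk[\Sat(S)]$, whose nontrivial direction relies on the $M$-grading to reduce an integrality relation for a general element of $\kk[M]$ to integrality relations for its individual monomials.
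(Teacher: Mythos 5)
Your proposal is correct, and it is essentially the standard argument from \cite{CLS} (Theorem 1.3.5), which is what the paper itself relies on: this proposition appears in the background section without proof, cited from the classical theory. The two technical points you flag — that the integral closure of $\kk[S]$ in $\kk[M]$ is $M$-graded (so integrality reduces to monomials, whose top graded component gives $(k-i)m \in S$), and Gordan's lemma together with the rationality of the coefficients expressing a lattice point of $\Cone(S)$ — are exactly the ingredients of that standard proof, so nothing is missing.
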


The language of cones is a powerful one, as it allows us to translate results about toric varieties into results about convex geometry, and vice-versa. Moreover, it allows for a nice characterization of arbitrary normal toric varieties via polyhedral fans:

A \textbf{face} of the cone $\sigma$ is a sub-cone $H_m \cap \sigma$ of $\sigma$, where $H_m := \ker m \cap \sigma = \{u \in \sigma \mid \langle m, u \rangle = 0\}$ for some $m \in \check \sigma$. A \textbf{fan} $\Sigma$ in $N_\RR$ is a finite collection of strongly convex rational polyhedral cones $\sigma$ such that $\Sigma$ is closed under taking faces, and the intersection of any two cones in $\Sigma$ is a face of each.

One may construct a toric variety $X_\Sigma$ from a fan $\Sigma$ by gluing together the open affines $X_\sigma$ for $\sigma \in \Sigma$ along their intersections (which correspond to intersections of cones). A theorem of Sumihiro \cite{Sumihiro} asserts that all normal toric varieties arise in this way.

Toric morphisms between normal toric varieties can likewise be described entirely in terms of their corresponding fans. In particular, a morphism $\phi : X_\Sigma \to X_{\Sigma'}$ is toric if and only if it arises from a lattice map $\bar \phi : N \to N'$ that is compatible with the fans in the sense that for every $\sigma \in \Sigma$, there is $\sigma' \in \Sigma'$ such that $\bar \phi_\RR(\sigma) \subseteq \sigma'$. In this sense, the category of normal toric varieties is equivalent to the category of fans.

\section{Algebraic supergeometry}\label{SupergeometrySection}

We briefly introduce the basic notions of complex algebraic supergeometry that will be relevant to this paper.

\subsection{Superalgebras}
\begin{definition}
A \textbf{superalgebra} is a $\ZZ/2\ZZ$-graded, unital, associative $\kk$-algebra $A = A_0 \oplus A_1$.
\end{definition}

If $a \in A$ is homogeneous, we denote by $|a| \in \{0,1\}$ the \textbf{parity} of $a$. For the purposes of this paper, all superalgebras $A$ will be \textbf{commutative} in the sense that the multiplication satisfies $a \cdot b = (-1)^{|a| \cdot |b|}b \cdot a$ for homogeneous elements $a \in A_{|a|}$ and $b \in A_{|b|}$. We will generally use Roman letters to denote even elements and Greek letters to denote odd elements; e.g.\ $\kk[x_1, x_2, \xi_1, \xi_2, \xi_3]$ is a superalgebra with two even generators and three odd generators.

\textbf{Morphisms of superalgebras} will always respect parity. Note that if $A$ and $B$ are superalgebras, then so is $A \otimes_\kk B$:
    \begin{align*}
        (A \otimes_\kk B)_0 &= (A_0 \otimes_\kk B_0) \oplus (A_1 \otimes_\kk B_1) \\
        (A \otimes_\kk B)_1 &= (A_0 \otimes_\kk B_1) \oplus (A_1 \otimes_\kk B_0)
    \end{align*}

Let $A$ be a commutative superalgebra. We say $A$ is an \textbf{integral superdomain} if the ideal $(A_1)$ generated by $A_1$ is a prime ideal containing all the zero divisors of $A$. It is \textbf{local} if it has a unique maximal ideal. As in the usual setting, the localization of a superalgebra at a prime ideal is local.

A \textbf{Hopf superalgebra} is a superalgebra $A$ which is simultaneously a super coalgebra and is equipped with a map called the antipode, satisfying all the usual diagrams.

\subsection{Superspaces}

\begin{definition}
    A \textbf{superspace} $X = (|X|, \O_X)$ is a pair consisting of a topological space $|X|$ and a sheaf $\O_X$ of superalgebras such that each stalk $\O_{X,x}$ is a local superalgebra. A \textbf{morphism of superspaces} is a pair $\phi = (|\phi|, \phi^\#) : X \to Y$ for a continuous map $|\phi| : |X| \to |Y|$ and a morphism of $\phi^\# : \O_Y \to |\phi|_* \O_X$ of sheaves on $|Y|$ which becomes a local morphism of local superalgebras on each stalk.
    
    We say that $X$ is a \textbf{superscheme} if the odd part $\O_{X,1}$ of the structure sheaf is a quasi-coherent sheaf of $\O_{X,0}$-modules. An \textbf{open subscheme} of $X$ is given by the restriction of $\O_X$ to an open set $U \subseteq |X|$, and a \textbf{closed subscheme} is an equivalence class of closed immersions in the usual sense. If $|X|$ is Noetherian and irreducible, we write $\kk(X)$ for the stalk of $\O_X$ at the generic point.
\end{definition}

Let $(\O_{X,1})$ be the ideal sheaf in $\O_X$ generated by the odd elements at every open $U \subseteq |X|$. Let $\O_{\overline{X}}$ be the sheaf of ordinary algebras defined by $\O_{\overline{X}}(U) = (\O_X/(\O_{X,1}))(U)$. We call $\overline{X} := (|X|, \O_{X}^r)$ the \textbf{bosonic reduction} of $X$.

Given a superalgebra $A = A_0 \oplus A_1$, we observe that $A$ is an $A_0$-module via left multiplication, allowing us to construct the \textbf{affine superscheme} $\Spec A = (|\Spec A_0|, \tilde A)$ where $\tilde A$ is the sheaf associated to the module $A$. Letting $\overline{A} = A/(A_1)$, we note that $|\Spec A_0| = |\Spec \overline{A}|$ since the two algebras differ only by nilpotents. Hence $\overline{\Spec A} \cong \Spec \overline{A}$. It can be shown that affine superschemes are the local models of arbitrary superschemes (\cite{CCF}, Proposition 10.1.3).

The following definition is due to Sherman \cite{Sherman}.

\begin{definition}\label{SupervarietyDefinition}
    An \textbf{algebraic supervariety} is a superscheme $X$ over $\kk$ such that all of the following conditions hold:
    \begin{enumerate}[label=(\roman*)]
        \item $X$ is irreducible (i.e.\ $|X|$ is an irreducible topological space).
        \item $X$ is separated (i.e.\ the diagonal morphism $X \to X \times_\kk X$ is a closed embedding).
        \item $X$ is of finite type (i.e.\ $X$ admits a cover by finitely many open affine subschemes $\Spec A$ where the $A$ are finitely generated superalgebras over $\kk$).
        \item For any open subscheme $U \subseteq X$, the restriction map $\O_X(U) \to \kk(X)$ is injective.
        \item $\kk(X)$ is an integral superdomain.
    \end{enumerate}
\end{definition}

The latter two conditions provide a weakened version of the usual integrality condition for ordinary algebraic varieties. An algebraic supervariety is moreover \textbf{integral} if (v) is strengthened to
\begin{enumerate}[label=(v$'$)]
        \item For any open subscheme $U \subseteq X$, $\O_X(U)$ is an integral superdomain.
\end{enumerate}

Unless a supervariety $X$ is integral, it is not true that its bosonic reduction $\overline{X}$ is an algebraic variety (since integrality will fail). However, it is still possible to extract an ordinary algebraic variety from an arbitrary supervariety: Define an ideal sheaf $\I$ in $\O_X$ by setting $\I(U)$ equal to the preimage of $(\kk(X)_1)$ under the restriction map $\O_X(U) \to \kk(X)$. Then $\I$ contains $(\O_{X,1})$ by part (iv) of the above definition, so we may define the \textbf{underlying variety} $X_0:= (|X|, \O_X/\I)$. To see that it is a variety, we observe that the quotient kills all zero divisors in $\O_X(U)$ by part (v) of the above definition. One may alternatively construct $X_0$ as the reduced induced closed subscheme of $\overline{X}$ with the same topological space $|X|$.

\begin{example}
    Consider the subalgebra $A = \kk[x, \xi_1, x\xi_2, \xi_1\xi_2] \subseteq \kk[x, \xi_1, \xi_2]$. One can check that $X=\Spec A$ is a non-integral supervariety with $\overline{X} \cong \Spec \kk[x, \xi_1\xi_2]$ and $X_0 \cong \Spec \kk[x]$.
\end{example}

\begin{proposition}\label{Affine}
Let $X$ be a supervariety. The following are equivalent:
\begin{enumerate}
    \item $X$ is affine.
    \item $\overline{X}$ is affine.
    \item $X_0$ is affine.
\end{enumerate}
\end{proposition}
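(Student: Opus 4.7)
The plan is to establish the cycle $(1) \Rightarrow (2) \Rightarrow (3) \Rightarrow (2) \Rightarrow (1)$. The forward portion is essentially formal from the constructions already recalled. If $X \cong \Spec A$, the excerpt notes that $\overline X \cong \Spec \overline A$, giving $(1) \Rightarrow (2)$. For $(2) \Rightarrow (3)$, the excerpt constructs $X_0$ as the reduced induced closed subscheme of $\overline X$ on the same topological space, and a closed subscheme of an affine scheme is affine.

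The substantive content is the two reverse implications, and I would handle them by a single idea: $|X| = |\overline X| = |X_0|$, and the three structure sheaves are related by quotients by \emph{nilpotent} ideal sheaves. On the one hand, $(\O_{X,1})$ is nilpotent on $X$: by finite type, $X$ is locally covered by affines $\Spec A$ with finitely many odd generators $\xi_1, \ldots, \xi_r$, each satisfying $\xi_i^2 = 0$ by supercommutativity in characteristic $0$, so any product of more than $r$ odd elements vanishes and $(A_1)^{r+1} = 0$; taking the maximum over a finite affine cover gives a global bound. On the other hand, the kernel of $\O_{\overline X} \to \O_{X_0}$ lies in the nilradical of $\O_{\overline X}$ (since $X_0$ is reduced with the same underlying space), and this nilradical is nilpotent because $\overline X$ is a finite-type ordinary scheme over $\kk$.

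To exploit this, I would invoke Serre's cohomological criterion for affineness in the super setting: a superscheme $X$ is affine if and only if $|X|$ is quasi-compact and $H^i(X, F) = 0$ for every quasi-coherent $\O_X$-module $F$ and all $i > 0$. Quasi-compactness is automatic by finite type. For $(2) \Rightarrow (1)$, given a quasi-coherent $\O_X$-module $F$, consider the finite filtration
\begin{align*}
F \supseteq (\O_{X,1}) F \supseteq (\O_{X,1})^2 F \supseteq \cdots \supseteq (\O_{X,1})^{r+1} F = 0.
\end{align*}
Each successive quotient is annihilated by $(\O_{X,1})$, so it is a quasi-coherent $\O_{\overline X}$-module; since $\overline X$ is affine, classical Serre vanishing kills its higher cohomology. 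The long exact sequences then propagate the vanishing up the filtration to $F$. The implication $(3) \Rightarrow (2)$ is entirely analogous, with the nilradical of $\O_{\overline X}$ playing the role of $(\O_{X,1})$.

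The main obstacle will be pinning down the cohomological criterion in the super setting---specifically, checking that Serre's theorem transfers to superschemes as stated, and that once $H^i(X, F) = 0$ is established one can identify $X$ with $\Spec \O_X(X)$ via the canonical morphism. I expect both to be routine adaptations of the classical arguments, since sheaf cohomology of superschemes is computed just as for ordinary schemes after forgetting the $\ZZ/2\ZZ$-grading, and the canonical morphism can be checked to be an isomorphism on $\overline X$ (reducing to the ordinary case) and then lifted through the nilpotent filtration.
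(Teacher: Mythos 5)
Your argument is correct in outline, but it is a genuinely different route from the paper: the paper does not argue at all, it simply cites \cite{Zubkov} for (a) $\iff$ (b) and \cite{Hartshorne}, Exercise III.3.1 for (b) $\iff$ (c), whereas you reconstitute both citations from scratch. Your treatment of (b) $\iff$ (c) is exactly the standard solution of the Hartshorne exercise (nilradical is nilpotent on a noetherian scheme, filter a quasi-coherent sheaf by its powers, apply Serre vanishing on the reduction and the cohomological criterion), and your nilpotence bound $(A_1)^{r+1}=0$ on each affine chart, hence globally by finite type, is sound. The one step that is not off-the-shelf is the ``Serre criterion for superschemes'' you invoke for (b) $\implies$ (a): you would have to re-prove that cohomological vanishing plus quasi-compactness forces the canonical map $X \to \Spec \Gamma(X,\O_X)$ to be an isomorphism in the super setting, and while your proposed dévissage (check on $\overline X$, lift through the nilpotent filtration) can be made to work, it is real work rather than a quotation. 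A cleaner way to close this gap, which is essentially the content of the result of Zubkov that the paper cites, is to note that the even ringed space $(|X|,\O_{X,0})$ is an ordinary finite-type scheme, that $\O_{X,1}^2 \subseteq \O_{X,0}$ is a nilpotent ideal with quotient $\O_{\overline X}$, and that $X$ is affine if and only if $(|X|,\O_{X,0})$ is affine (using quasi-coherence of $\O_{X,1}$ over $\O_{X,0}$ to recover $X$ as $\Spec$ of its global sections); then the classical criterion applied to $(|X|,\O_{X,0})$ finishes the argument and no super version of Serre's theorem is needed. With that step either proved or replaced by the citation, your proof is complete; what it buys over the paper's is self-containedness, at the cost of redoing infrastructure the paper deliberately outsources.
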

\begin{proof}
    See \cite{Zubkov} for (a) $\iff$ (b) and \cite{Hartshorne}, Exercise III.3.1 for (b) $\iff$ (c).
\end{proof}

\begin{proposition}\label{IntegralSupervarietyProp}
Let $X$ be a supervariety. The following are equivalent:
\begin{enumerate}
    \item $X$ is integral.
    \item $\overline{X}$ is integral.
    \item $\overline{X} = X_0$.
\end{enumerate}
\end{proposition}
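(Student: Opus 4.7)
The plan is to prove (a)$\Rightarrow$(b), (b)$\iff$(c), and (c)$\Rightarrow$(a), closing the cycle. For (b)$\iff$(c), I would observe that both $\overline{X}$ and $X_0$ have underlying space $|X|$, which is irreducible by condition (i) of the supervariety definition. The text already notes that $X_0$ is the reduced induced closed subscheme of $\overline{X}$ with topological space $|X|$, so $X_0 = \overline{X}^{\mrm{red}}$, and $X_0$ is always a variety (in particular, integral). Since an irreducible scheme is integral iff it is reduced iff it equals its own reduction, it follows that $\overline{X}$ is integral iff $\overline{X} = X_0$.

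For (a)$\Rightarrow$(b), I would take any affine open $V = \Spec A$ in $X$. The integral superdomain condition on $A$ gives that $A/(A_1)$ is a domain. Since $\O_{\overline{X}}(V) = A/(A_1)$ on affines, $\overline{X}$ has domain sections on an affine cover, hence is reduced, hence (combined with the irreducibility noted above) integral.

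For (c)$\Rightarrow$(a), fix any open $U \subseteq X$ and set $A = \O_X(U)$ and $K = \kk(X)$. Condition (iv) provides an injection $A \hookrightarrow K$, and by (v) $K$ is an integral superdomain, so $(K_1)$ is prime and contains every zero divisor of $K$. Any zero divisor of $A$ maps to a zero divisor of $K$, hence lies in $\I(U) = A \cap (K_1)$ by the definition of $\I$. Under (c), the sheaves $\I$ and $(\O_{X,1})$ coincide; on an affine $V = \Spec B$ this reads $\I(V) = (B_1)$, and quasi-coherence of $\O_{X,1}$ should propagate this identification to $\I(U) = (A_1)$ on an arbitrary open $U$. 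Thus $(A_1)$ contains every zero divisor of $A$, and the quotient $A/(A_1) = A/\I(U) = \O_{X_0}(U)$ embeds into the domain $K/(K_1)$, so $A/(A_1)$ is itself a domain and $(A_1)$ is prime. Therefore $A$ is an integral superdomain.

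The main obstacle I anticipate is the sheaf-theoretic identification $\I(U) = (A_1)$ on non-affine opens $U$; this requires a careful sheafification argument leveraging the fact that the ideal sheaves $\I$ and $(\O_{X,1})$ agree on a base of affines together with the quasi-coherence of $\O_{X,1}$ as an $\O_{X,0}$-module. Everything else reduces to the standard equivalences between integrality, reducedness, and reduction for an irreducible scheme, plus the direct consequences of conditions (iv) and (v) of the supervariety definition.
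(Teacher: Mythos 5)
Your argument follows essentially the same route as the paper. The paper proves (a)$\iff$(c) directly and obtains (b)$\iff$(c) from the fact that $X_0$ is the reduced induced closed subscheme structure on $\overline{X}$, exactly as you do; your only structural deviation is closing the cycle with (a)$\Rightarrow$(b), checked on an affine cover, instead of the paper's (a)$\Rightarrow$(c). That variant is harmless, and arguably cleaner, since it sidesteps the paper's observation that elements of $\I(U)$ are nilpotent (which the paper records, slightly inaccurately, as $\xi^2=0$). Your (c)$\Rightarrow$(a) --- identify $\I$ with $(\O_{X,1})$ via the correspondence between closed subschemes and ideal sheaves, then use condition (iv) to send zero divisors of $\O_X(U)$ to zero divisors of $\kk(X)$ and the embedding $\O_X(U)/\I(U) \hookrightarrow \kk(X)/(\kk(X)_1)$ to get primality --- is precisely the paper's argument, written out in more detail than the paper gives.

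Concerning the obstacle you flag: the passage from the sheaf-level equality $\I = (\O_{X,1})$ to the section-level equality $\I(U) = (\O_X(U)_1)$ on an arbitrary (possibly non-affine) open $U$ is exactly the point at which the paper is silent; it writes only that $\I = (\O_{X,1})$ ``and hence $X$ is integral.'' So you are not omitting anything that the paper supplies. Be aware, though, that quasi-coherence of $\O_{X,1}$ alone does not obviously close this step: over a non-affine $U$, a section of the ideal sheaf generated by $\O_{X,1}$ is only locally a combination of odd sections, and there is no a priori reason it is a combination of odd sections defined on all of $U$. In effect the paper reads $(\O_{X,1})(U)$ as the ideal generated by $\O_X(U)_1$ for every open $U$; with that reading your argument (like the paper's) goes through verbatim, whereas with the sheafified reading an additional argument is needed at this point rather than an appeal to quasi-coherence.
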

\begin{proof}
For (a) $\implies$ (c), suppose $X$ is integral. It suffices to show that $\I = (\O_{X,1})$. The inclusion $(\O_{X,1}) \subseteq \I$ is already known. Let $\xi \in \I(U)$, so $\xi^2 = 0$. Since $\O_X(U)$ is integral, we have $\xi \in (\O_{X,1})(U)$, and so $\overline{X} = X_0$. Conversely, if $\overline{X} = X_0$ (both of which are closed subschemes of $X$), then by the correspondence of closed subschemes with ideal sheaves, we obtain $\I = (\O_{X,1})$ and hence $X$ is integral. Therefore (a) $\iff$ (c).

We also have (b) $\iff$ (c) because $X_0$ is the reduced induced closed subscheme structure on $\overline{X}$.
\end{proof}

The following definition occurs in \cite{Sherman}, Section 3.3.2.

\begin{definition}
    Given an algebraic supervariety $X$, we may find a dense open subscheme $U \subseteq X$ such that its bosonic reduction $\overline{U}$ is smooth. The conormal sheaf $(\O_{X,1})/(\O_{X,1})^2$ is coherent on $U$, so it trivializes over a dense open subscheme $\overline{U}' \subseteq \overline{U}$. We then define the \textbf{dimension} of $X$ to be the dimension of $\m_x / \m_x^2$ as a super vector space for any choice of $x \in \overline{U}'$. In particular, if $\dim \overline{U} = n$ and the rank of the vector bundle over $\overline{U}'$ is $r$, then $\dim X = (n|r)$.
\end{definition}

The following result shows that for the purposes of this paper, the distinction between $\overline{X}$ and $X_0$ is insignificant.
\begin{proposition}\label{IntegralIfOneOddDim}
    An $(n|1)$-dimensional supervariety is integral.
\end{proposition}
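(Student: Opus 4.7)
By Proposition \ref{IntegralSupervarietyProp}, integrality of $X$ is equivalent to the equality of ideal sheaves $\I = (\O_{X,1})$. The containment $(\O_{X,1}) \subseteq \I$ holds automatically, so the plan is to prove the reverse inclusion $\I(U) \subseteq (\O_{X,1})(U)$ for every open $U \subseteq |X|$ by translating the problem to a statement about the generic super-stalk $\kk(X)$ via axiom (iv) of Definition \ref{SupervarietyDefinition}, and exploiting $\dim X = (n|1)$ to control the odd part of $\kk(X)$.

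The analytic heart of the argument is the claim that $\kk(X)_1$ is generated by a single odd element over $\kk(X)_0$. To establish this, I would choose a dense open $\overline{U}' \subseteq |X|$ on which $\overline{U}$ is smooth and the conormal sheaf $(\O_{X,1})/(\O_{X,1})^2$ is free of rank $1$ over $\O_{\overline{X}}$, as furnished by the definition of dimension. Localizing a free generator at the generic point of $|X|$ produces a generator of $\kk(X)_1/\kk(X)_1^3$ as a module over $\kk(X)_0/\kk(X)_1^2$. Super Nakayama on the local superalgebra $\kk(X)$---whose even part $\kk(X)_0$ is local and contains $\kk(X)_1^2$ in its maximal ideal, and over which $\kk(X)_1$ is finitely generated by the finite-type hypothesis---then promotes this to a single element $\xi$ generating all of $\kk(X)_1$.

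Once $\kk(X)_1 = \kk(X)_0 \cdot \xi$, supercommutativity gives $\xi^2 = 0$ and hence $\kk(X)_1 \cdot \kk(X)_1 = 0$, so the ideal $(\kk(X)_1) \subseteq \kk(X)$ has trivial even part and equals $\kk(X)_1$ itself. Now for $a = a_0 + a_1 \in \I(U)$ with $a_i \in \O_{X,i}(U)$, the image of $a$ in $\kk(X)$ lies in $(\kk(X)_1) = \kk(X)_1$, so the even piece $a_0$ maps to $0$ in $\kk(X)_0$. Axiom (iv) of Definition \ref{SupervarietyDefinition} asserts that the parity-preserving restriction $\O_X(U) \to \kk(X)$ is injective, forcing $a_0 = 0$. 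Therefore $a = a_1 \in \O_{X,1}(U) \subseteq (\O_{X,1})(U)$, as desired.

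The principal obstacle is the super-Nakayama step: one must verify that $\kk(X)_0$ is local, that $\kk(X)_1^2$ sits inside its maximal ideal, and that $\kk(X)_1$ is finitely generated over $\kk(X)_0$. All three follow routinely from standard properties of stalks of superschemes together with the finite-type assumption, but packaging them cleanly is the one non-formal moment in the argument.
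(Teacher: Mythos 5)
Your proposal is correct, and its analytic core coincides with the paper's: everything turns on the fact that one odd dimension forces $\kk(X)_1$ to be principal over $\kk(X)_0$, hence $\kk(X)_1^2=0$ and $(\kk(X)_1)=\kk(X)_1$, after which the injectivity axiom (iv) transports this back to sections. Where you differ is the packaging of the endgame: the paper reduces to the affine case and verifies the definition of an integral superdomain directly, arguing via axiom (v) that all zero divisors of $\kk(X)$, and hence of $\kk[X]$, are odd; you instead prove the ideal-sheaf equality $\I=(\O_{X,1})$ on every open and conclude by Proposition \ref{IntegralSupervarietyProp} (strictly, its statement is $\overline{X}=X_0$, but the identification with $\I=(\O_{X,1})$ is exactly the correspondence used in that proposition's proof, so this is a fair invocation). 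The real added value of your version is that you justify $\kk(X)_1^2=0$, which the paper simply asserts: you derive it from the definition of $(n|1)$-dimensionality (generic rank one of the conormal sheaf $(\O_{X,1})/(\O_{X,1})^2$, whose stalk at the generic point is $\kk(X)_1/\kk(X)_1^3$ as a module over $\kk(X)_0/\kk(X)_1^2$) together with super Nakayama, for which your three verifications---$\kk(X)_0$ is local, $\kk(X)_1^2$ is a nilpotent ideal inside its maximal ideal, and $\kk(X)_1$ is finitely generated over $\kk(X)_0$ because it is a localization of a finitely generated module---are all correct. So your route is a mild restructuring (ideal-sheaf criterion rather than the zero-divisor check), and it fills in a step the paper leaves implicit.
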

\begin{proof}
    It suffices to show for an affine supervariety $X$. We have an injection $\kk[X] \to \kk(X)$ where $\kk(X)_1^2 = 0$. Hence $(\kk(X)_1) = \kk(X)_1$, so all zero divisors in $\kk(X)$ (and thus all zero divisors in $\kk[X]$) are odd. Therefore $\kk[X]$ is integral, and we are finished.
\end{proof}

We will later encounter definitions of graded, split, and smooth supervarieties in Sections \ref{SplitSubsection}-\ref{SmoothSubsection}.

\subsection{Algebraic supergroups}

\begin{definition}
    An \textbf{affine algebraic supergroup} is any of the following equivalent notions:
    \begin{enumerate}
        \item The spectrum of a finitely-generated Hopf superalgebra $R$ which is an integral superdomain
        
        \item A group object $G$ in the category of affine supervarieties
        
        \item An affine supervariety $G = \Spec R$ whose functor of points $G(A) := \Hom(R,A)$ is group-valued
        
        \item A pair $G = (G_0, \g)$ where $G_0$ is an ordinary (irreducible) affine algebraic group, $\g = \g_0 \oplus \g_1$ is a finite-dimensional Lie superalgebra such that $\Lie G_0 = \g_0$, and $\g$ is equipped with a $G_0$-module structure such that the corresponding $\g_0$-module is adjoint (called a \textbf{Harish-Chandra pair})
    \end{enumerate}
\end{definition}

We use these notions (which can be found in \cite{CCF}, Sections 7 and 11) interchangeably throughout this paper. The requirement that the Hopf superalgebra $R$ of part (a) be integral is needed only to guarantee irreducibility of the supergroup. If we relax the assumption that a supervariety must be integral (and accordingly adjust for the possibility of multiple generic points), then this requirement may be dropped, and definitions (a)-(d) remain equivalent.

The notation in the definition of a Harish-Chandra pair is intentional, as $G_0 = \overline{G}$ is the underlying algebraic group of $G$ and $\g$ is the Lie superalgebra (i.e.\ tangent space at the identity) of $G$. To see the equivalence of definition (a) with definitions (b) and (c), we note the following:
\begin{proposition}
    An affine algebraic supergroup is an integral supervariety.
\end{proposition}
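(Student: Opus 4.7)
The plan is to reduce the claim to showing that the bosonic reduction $\overline{G}$ is an ordinary integral variety, and then apply Proposition \ref{IntegralSupervarietyProp}. That proposition tells us $G$ is integral as a supervariety if and only if $\overline{G}$ is integral, which in the affine case amounts to showing that $\overline{R} := R/(R_1)$ is an integral domain, where $R$ is the coordinate Hopf superalgebra of $G$.

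First, I would observe that $\overline{G}$ inherits the structure of an ordinary affine algebraic group. Under definition (d) this is automatic, since $\overline{G} = G_0$ is by assumption such a group. Under definitions (a)--(c), one checks that $(R_1)$ is a Hopf ideal of $R$: for any odd $\xi \in R_1$, the supercocommutativity forces $\Delta(\xi) \in R_0 \otimes R_1 + R_1 \otimes R_0 \subseteq R \otimes (R_1) + (R_1) \otimes R$, and the counit and antipode respect $(R_1)$ by similar parity arguments. Hence the Hopf superalgebra structure on $R$ descends to a Hopf algebra structure on $\overline{R}$, realizing $\overline{G}$ as an affine algebraic group in the classical sense.

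Next, I would invoke Cartier's theorem: over a field of characteristic zero, every affine algebraic group scheme is smooth, and in particular reduced. Combining this with the irreducibility of $|G| = |\overline{G}|$ (which is axiom (i) of Definition \ref{SupervarietyDefinition}, and is automatic under (d) since $G_0$ is assumed irreducible), we get that $\overline{R}$ is reduced with irreducible spectrum, i.e.\ an integral domain. Thus $\overline{G}$ already equals its reduced-induced subscheme structure $X_0$, and Proposition \ref{IntegralSupervarietyProp} ((c) $\Rightarrow$ (a)) concludes that $G$ is an integral supervariety.

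The main subtlety is the appeal to Cartier's theorem, which is precisely what fails in positive characteristic and which prevents examples like $A = \kk[x, \xi_1, x\xi_2, \xi_1\xi_2]$ from occurring as the coordinate ring of a supergroup. If one starts from definition (a) directly, the conclusion is essentially a repackaging of the hypothesis: the integral superdomain condition on $R$ just says that $(R_1)$ is prime and captures the zero divisors, which is exactly the statement that $\overline{R}$ is a domain. The real content of the proposition, therefore, is that the group structure built into (b)--(d) forces this integrality without having to impose it separately.
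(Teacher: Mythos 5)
Your proposal is correct and follows essentially the same route as the paper: pass to the bosonic reduction $\overline{G}$, note it is an affine group scheme over a characteristic-zero field and hence reduced by Cartier's theorem, conclude $\overline{G}$ is integral, and finish via Proposition \ref{IntegralSupervarietyProp}. The extra details you supply (that $(R_1)$ is a Hopf ideal, and the remark about definition (a)) are fine but not needed beyond what the paper's argument already contains.
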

\begin{proof}
    Using definition (b), consider the bosonic reduction $\overline{G}$, which is an affine group scheme satisfying all properties of an algebraic variety, except possibly reducedness. But any group scheme over a field of characteristic 0 is reduced (\cite{Perrin}, I, Theorem 1.1 and I, Corollary 3.9 and II, Theorem 2.4), so indeed $\overline{G}$ is integral. We are therefore finished by Proposition \ref{IntegralSupervarietyProp}.
\end{proof}

An \textbf{action} of an algebraic supergroup on a supervariety is defined as a morphism $$\rho : G \times X \to X$$ satisfying the usual properties.

\section{Algebraic supertori}\label{SupertoriSection}

\begin{definition}
    An \textbf{algebraic supertorus} is an algebraic supergroup $T = (T_0, \t)$ such that $T_0 \cong (\kk^\times)^n$ is an ordinary algebraic torus lying in the center of $T$.
\end{definition}

This definition is motivated by the theory of Cartan subalgebras of quasireductive Lie superalgebras as in \cite{SerganovaQred}. A \textbf{quasireductive} Lie superalgebra $\g$ is one whose even part $\g_0$ is reductive. In this setting, one chooses a Cartan subalgebra $\h_0$ of $\g_0$, and defines $\h$ as the centralizer of $\h_0$ in $\g$. Thus, an algebraic supertorus is an algebraic supergroup $T$ such that $T_0$ is a torus and $\Lie T$ is a Cartan subalgebra of a quasireductive Lie superalgebra. The representation theory of these superalgebras (and thus of algebraic supertori) has been studied in \cite{GSS}.

\begin{example}
    Define $Q(1)$ as the supergroup whose $A$-points consist of matrices $$\begin{pmatrix}
        a & \xi \\
        \xi & a
    \end{pmatrix}$$ for $a \in A_0^\times$ and $\xi \in A_1$, equipped with the usual matrix multiplication. Then $Q(1)$ is a $(1|1)$-dimensional supertorus.
\end{example}

\subsection{Supergroup structure}

If $\dim T$ = $(n|1)$, then $\t$ is an $(n|1)$-dimensional Lie superalgebra with $\t_0 \subset Z(\t)$ and an odd generator $\theta$ such that $\theta^2 := \frac{1}{2}[\theta, \theta] \in \t_0$. An $(n|1)$-dimensional supertorus is therefore parameterized by a point $c \in \t_0 \cong N_\kk := N \otimes_\ZZ \kk$, where $N$ is the cocharacter lattice of $T_0$.

\begin{definition}
    Denote by $T_N$ the ordinary algebraic torus with character lattice $N$, and $T_{N,c} = (T_N, \t)$ the supertorus with one odd dimension corresponding to the parameter $c = \theta^2 \in N_\kk$.
\end{definition}

Let $M = N^*$ be the character lattice of $T_N$. The supertorus $T_{N,c}$ will have coordinate ring $\kk[T_{N,c}] = \kk[M, \xi]$, with a Hopf superalgebra structure
\begin{align*}
    \Delta x^m &= x^m \otimes x^m (1+ \langle m, c \rangle \xi \otimes \xi) & S x^m &= x^{-m} & \epsilon x^m &= 1 \\
    \Delta \xi &= \xi \otimes 1 + 1 \otimes \xi &
    S \xi &= -\xi & \epsilon \xi &= 0
\end{align*}
where $\langle\, , \rangle$ is the perfect pairing $M \otimes N \to \ZZ$, extended $\kk$-linearly to $M \otimes N_\kk$.

Fixing an identification $N \cong \ZZ^n$ so that $c = (c_1, ..., c_n) \in \kk^n$, write $T_{N,c}(A) = \Hom(\Spec A, T_{N,c})$ for the functor of points of $T$ evaluated on a superalgebra $A$. Then the group law of $T_{N,c}(A)$ is given by
\begin{align*}
    (t_1, ..., t_n \mid \xi) \cdot (u_1, ..., u_n \mid \eta) &= (t_1 u_1 (1+c_1 \xi \eta), ..., t_n u_n (1+c_n \xi \eta) \mid \xi + \eta)
\end{align*}
for $t_i, u_i \in (A_0)^\times$ and $\xi, \eta \in A_1$.

\subsection{Representation theory}

The coordinate ring $\kk[T_{N,c}]$ admits an action by $\t$, wherein the even part acts as usual (fixing $\xi$) and the odd generator acts by $x^m \xi \mapsto x^m$ and $x^m \mapsto \langle m, c \rangle x^m \xi$. Using the above identification $T_0 \cong \ZZ^n$, we may write $\t$ as the subalgebra
$$\Span_\kk \left( x_1 \d{x_1}, ..., x_n \d{x_n} \right) \oplus \Span_\kk \left( \xi \left( c_1 x_1 \d{x_1} + ... + c_nx_n\d{x_n} \right) + \d\xi \right)$$
of the Lie superalgebra of derivations on $\kk[T_{N,c}] \cong \kk[x_1^{\pm 1}, ..., x_n^{\pm 1}, \xi]$.
Thus, as a representation of $T_{N,c}$, $\kk[T_{N,c}]$ splits into a direct sum of $(1|1)$-dimensional representations $L(m) = \Span_\kk ( x^m, x^m \xi )$ for $m \in M$.

\begin{lemma}\label{SubrepsLem}
If $T=T_{N,c}$, then $L(m)$ is irreducible if and only if $\langle m, c \rangle \neq 0$. If $\langle m, c \rangle = 0$, then $L(m)$ is indecomposable but has a $(1|0)$-dimensional subrepresentation $\langle x^m \rangle$.
\end{lemma}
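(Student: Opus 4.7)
The plan is to work directly from the explicit action of $\t$ on $L(m)$ stated in the paragraph just above the lemma, namely $\theta \cdot x^m = \langle m, c\rangle x^m \xi$ and $\theta \cdot x^m\xi = x^m$, while the even part $\t_0$ acts diagonally by the character $m$ on both basis vectors. Since $\t_0$ acts by a single weight, any $\t$-subrepresentation is automatically $\t_0$-stable, so what controls everything is the single operator $\theta$.

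First I would dispose of the irreducibility statement. A $(1|1)$-dimensional graded subspace of $L(m)$ that is proper and nonzero must be purely even, hence equal to $\langle x^m\rangle$, or purely odd, hence equal to $\langle x^m\xi\rangle$. Apply $\theta$ to a nonzero vector in each candidate: on $\langle x^m\rangle$ one gets $\langle m,c\rangle x^m\xi$, and on $\langle x^m\xi\rangle$ one gets $x^m$. Assuming $\langle m,c\rangle\neq 0$, both outputs leave the chosen subspace (one by parity, one by the nonvanishing of $\langle m,c\rangle$), so no proper nonzero subrepresentation exists and $L(m)$ is irreducible. Conversely, if $\langle m,c\rangle = 0$ then $\theta$ annihilates $x^m$, so $\langle x^m\rangle$ is a $(1|0)$-dimensional subrepresentation, immediately exhibiting reducibility.

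For the indecomposability claim under $\langle m,c\rangle=0$, I would argue that any complement to $\langle x^m\rangle$ in the category of $\t$-representations would have to be a $(0|1)$-dimensional subspace on which $\theta$ acts; the only such graded complement is $\langle x^m\xi\rangle$ (up to scalar), but $\theta\cdot x^m\xi = x^m \notin \langle x^m\xi\rangle$, so no $\t$-stable complement exists. Hence $L(m)$ has no decomposition as a direct sum of its proper nonzero subrepresentations.

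There is no real obstacle here: once the formulas for $\theta$ on the basis are taken as given, the argument reduces to checking two cases distinguished by the scalar $\langle m,c\rangle$. The only point requiring mild care is the reminder that a subrepresentation of a $\ZZ/2\ZZ$-graded module must itself be graded, which justifies the reduction to the two candidate subspaces $\langle x^m\rangle$ and $\langle x^m\xi\rangle$.
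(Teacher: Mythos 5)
Your proof is correct and follows essentially the same route as the paper, which simply observes that the even part of $\t$ preserves the span of each monomial and the odd generator acts by the explicit formulas $x^m \mapsto \langle m,c\rangle x^m\xi$, $x^m\xi \mapsto x^m$; your write-up just spells out the resulting case check on the two candidate graded subspaces in more detail than the paper does.
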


\begin{proof}
The even part preserves the span of each monomial, and the odd part acts as above.
\end{proof}

\subsection{Decomposability}

Under an identification $T_N \cong (\kk^\times)^n$ as above, we obtain $c = (c_1, ..., c_n) \in \kk^n$. Without loss of generality, arrange the indices so that $$c = (c_{1,1}, ..., c_{1,k_1}, ..., c_{r,1}, ..., c_{r,k_r},0,...,0)$$ where for each $i$, $\{c_{i,1}, ..., c_{i,k_i}\}$ is a maximal collection of nonzero rational multiples of an element of $\kk$. That is, we may assume $c_{i,j} = \ell_{i,j} c_i$ for some $c_i \in \kk$ and $\ell_{i,j} \in \ZZ$.

Since the $\GL(n,\ZZ)$-orbit of a point in $\ZZ^n$ consists of all $n$-tuples with the same least common multiple, there exists $g_i \in GL(k_i, \ZZ)$ such that $g(\ell_{i,1}, ..., \ell_{i,k_i}) = (\ell_i, 0, ..., 0)$ for some $\ell_i \in \ZZ$. If $g = (g_1, ..., g_r) \in \GL(k_1,\ZZ) \times ... \times \GL(k_r, \ZZ) \subseteq \GL(n,\ZZ)$, then (up to re-ordering the entries) we obtain $$gc = (c_1, ..., c_r, 0,...,0)$$ for some $\QQ$-linearly independent elements $c_1, ..., c_r \in \kk$. It follows that $$T_{N,c} \cong T_{\ZZ^r, (c_1, ..., c_r)} \times (\kk^\times)^{n-r}.$$

\begin{definition}
    We say a supertorus $T$ (of any dimension) is \textbf{indecomposable} if it cannot be written as a product $T' \times T''$ of two smaller supertori.
\end{definition}

We have seen that if the entries of $c = (c_1, ..., c_n) \in \kk^n$ admit a $\QQ$-linear dependency, then $T_{\ZZ^n, c}$ is decomposable. The following proposition shows that this condition exactly determines indecomposability of a supertorus with one odd dimension.

\begin{proposition}
    Let $T=T_{\ZZ^n,c}$ be a supertorus such that $c_1, ..., c_n \in \kk$ are $\QQ$-linearly independent. Then $T$ is indecomposable.
\end{proposition}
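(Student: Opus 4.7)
The plan is to prove the contrapositive: if $T = T_{\ZZ^n, c}$ decomposes nontrivially as $T' \times T''$, then I will extract a nonzero vector $v \in \ZZ^n$ with $v_1 c_1 + \cdots + v_n c_n = 0$, violating $\QQ$-linear independence. Since the total odd dimension of $T$ equals $1$, exactly one of the two factors absorbs the odd direction; up to relabeling, $T'$ has odd dimension $1$ and $T'' = T_{N''}$ is an ordinary algebraic torus of positive dimension $k \geq 1$.

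First I would pass to the bosonic reduction. The isomorphism of supergroups restricts to an isomorphism $\overline{T} \cong \overline{T'} \times T_{N''}$ of ordinary algebraic tori, which at the level of cocharacter lattices gives a $\ZZ$-module direct sum decomposition $\ZZ^n = N' \oplus N''$ with $N''$ of rank $k > 0$. Next I would locate $c$ inside this decomposition. Because $T''$ is purely even, the odd generator $\theta$ of $\t$ must lie in $\t'$, and hence $c = \theta^2 \in \t'_0 = N'_\kk \subset N_\kk$. In other words, $c$ has vanishing component in $N''_\kk$ under the induced $\kk$-linear decomposition $N_\kk = N'_\kk \oplus N''_\kk$.

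Finally I would convert this into a $\ZZ$-linear dependence in the original coordinates. Let $\pi \colon \ZZ^n \twoheadrightarrow N''$ be the projection away from $N'$. Extending scalars, $\pi_\kk(c) = 0$. Since $N'' \neq 0$, I can choose any nonzero $\ZZ$-linear functional $\lambda \colon N'' \to \ZZ$; then the composition $v := \lambda \circ \pi \in \Hom_\ZZ(\ZZ^n, \ZZ) \cong \ZZ^n$ is a nonzero integer vector satisfying $\langle v, c \rangle = 0$. Writing $v = (v_1, \ldots, v_n)$ in the standard basis, this reads $\sum_{i=1}^n v_i c_i = 0$, the desired nontrivial $\QQ$-linear relation, contradicting the hypothesis.

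The only step I expect to require care is the first one, namely justifying rigorously that a supergroup decomposition $T \cong T' \times T''$ really produces a splitting of the cocharacter lattice $\ZZ^n$ for which $c$ lies in the appropriate summand. Using the Harish-Chandra pair description $(T_0, \t)$ of an algebraic supergroup given earlier, a product decomposition amounts to compatible decompositions of $T_0$ and $\t$ as a $T_0$-module, and the parity constraint that $\theta \in \t_1$ together with $\t''_1 = 0$ forces $\theta \in \t'$, making the argument straightforward; everything else reduces to linear algebra over $\ZZ$.
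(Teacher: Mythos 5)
Your proof is correct and follows essentially the same route as the paper: assume a nontrivial decomposition $T \cong T' \times T''$, note that the single odd dimension forces one factor (say $T''$) to be a purely even torus of positive rank so that $c$ lands in $N'_\kk$, and contradict the $\QQ$-linear independence of $c_1, \dots, c_n$. The only difference is that you spell out the step the paper leaves implicit—producing an explicit nonzero integer functional $v = \lambda \circ \pi$ with $\sum_i v_i c_i = 0$ rather than just asserting that $c = (c'_1, \dots, c'_r, 0, \dots, 0)$ in suitable coordinates is a contradiction—which is a welcome addition, not a departure.
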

\begin{proof}
    Suppose $T = T' \times T''$. Since $T$ has only one odd dimension, we may assume without loss of generality that $T''$ is a purely even torus of nonzero dimension. Hence $T' = T_{\ZZ^r,c'}$ for some $c' \in \kk^r$, and $T'' = (\kk^\times)^{n-r}$. Then $c = (c_1', ..., c_r', 0, ..., 0)$ is a contradiction which proves the result.
\end{proof}

\subsection{Morphisms}

We will need the following lemmas on morphisms between supertori.

\begin{lemma}\label{ImageLem}
    Let $\phi : T \to T'$ be a morphism of supergroups, where $T$ and $T'$ are both supertori. Then the image of $\phi$ is a supertorus which is closed in $T'$.
\end{lemma}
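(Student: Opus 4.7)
The plan is to use the Harish-Chandra pair description, writing $T = (T_0, \t)$ and $T' = (T'_0, \t')$, under which $\phi$ corresponds to a pair $(\phi_0, d\phi)$ consisting of a torus morphism $\phi_0 : T_0 \to T'_0$ and a compatible Lie superalgebra morphism $d\phi : \t \to \t'$ with $d\phi|_{\t_0} = d\phi_0$. I will identify the scheme-theoretic image of $\phi$ with the closed sub-supergroup $H \subseteq T'$ corresponding to the sub-Harish-Chandra pair $(\phi_0(T_0), d\phi(\t))$, and then verify that $H$ is itself a supertorus.

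The first step is to invoke the classical result that the image of a homomorphism of algebraic tori is a closed subtorus; applied to $\phi_0$, this yields a closed subtorus $\phi_0(T_0) \subseteq T'_0$ with Lie algebra $d\phi_0(\t_0)$. Since $d\phi$ is a Lie superalgebra morphism, its image $d\phi(\t)$ is a Lie subsuperalgebra of $\t'$ whose even part coincides with $d\phi_0(\t_0) = \Lie \phi_0(T_0)$. This guarantees that $(\phi_0(T_0), d\phi(\t))$ is a well-defined sub-Harish-Chandra pair of $(T'_0, \t')$, so it corresponds to a closed sub-supergroup $H \subseteq T'$. Now $H$ is a supertorus: its bosonic reduction is the torus $\phi_0(T_0)$, and since $T'_0 \subseteq Z(T')$, the torus $\phi_0(T_0)$ is central in any sub-supergroup of $T'$ containing it, in particular in $H$.

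To identify $H$ with the image of $\phi$, note that $\phi$ factors through $H$ by construction, and any closed sub-supergroup of $T'$ through which $\phi$ factors must contain $\phi_0(T_0)$ in its bosonic reduction and $d\phi(\t)$ in its Lie algebra; so $H$ is the smallest such, i.e.\ the scheme-theoretic image, which is manifestly closed. The main obstacle is making this identification rigorous, namely verifying that the scheme-theoretic image in the super setting really coincides with the Harish-Chandra pair described above. As a direct alternative in the $(n|1)$-dimensional setting of the paper, one may compute $\phi^* : \kk[T'] \to \kk[T]$ explicitly: the comultiplication constraints force $\phi^*(x^{m'}) = x^{\phi'(m')}$ for some lattice map $\phi' : M' \to M$ and $\phi^*(\xi') = \alpha \xi$ for some $\alpha \in \kk$, so $\mrm{im}\,\phi^* = \kk[\phi'(M')][\alpha\xi]$ is visibly the coordinate ring of a supertorus, and its image in $T'$ is the closed subscheme cut out by $\ker \phi^*$.
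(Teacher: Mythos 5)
Your proposal is correct and takes essentially the same route as the paper: pass to Harish--Chandra pairs, invoke the classical fact that $\phi_0(T_0)$ is a closed subtorus of $T_0'$, observe that $d\phi(\t)$ is a Lie subsuperalgebra whose even part is $\Lie \phi_0(T_0)$, and use centrality (of $T_0'$ in $T'$, resp.\ $\t_0'$ in $\t'$) to conclude the image is a supertorus. The paper's proof is in fact terser than yours --- it does not spell out the identification of the image with the sub-Harish--Chandra pair at all --- so your extra care on that point, and your optional explicit $(n|1)$ coordinate computation, only add detail beyond what the paper records.
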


\begin{proof}
    We may view the morphism as living in the category of Harish-Chandra pairs, so that we have a morphism $\phi_0 : T_0 \to T_0'$ of algebraic groups and a compatible morphism $d\phi : \t \to \t'$ of Lie superalgebras. By the classical fact (\cite{CLS}, Proposition 1.1.1), $\phi_0(T_0)$ is a closed subtorus of $T_0'$. Moreover, $d\phi(\t)$ is a Lie superalgebra whose even part is the Lie algebra of $\phi_0(T_0)$. Since $t_0'$ lies in the center of $t'$, the same holds for $\Lie \phi_0(T_0) \subseteq \t_0'$ in $d\phi(\t) \subseteq \t'$, so the lemma follows.
\end{proof}

\begin{lemma}\label{MorphismOfSupertoriLemma}
    Let $\phi : T_{N,c} \to T_{N',c'}$ be a morphism of supervarieties. Then $\phi$ is a morphism of supergroups if and only if all of the following conditions hold.
    \begin{enumerate}[label=(\roman*)]
        \item There is a lattice map $\bar \phi : N \to N'$ such that $\phi|_{T_N} = \bar \phi \otimes 1_{\kk^\times}$.
        \item The dual map $\phi^* : \kk[M',\xi'] \to \kk[M,\xi]$ satisfies $\phi^*(\xi') = a\xi$ for some $a \in \kk$.
        \item $\bar \phi_\kk = \bar \phi \otimes 1_\kk : N_\kk \to N'_\kk$ satisfies $\bar \phi(c) = a^2 c'$.
    \end{enumerate}
\end{lemma}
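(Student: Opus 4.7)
The plan is to translate $\phi$ into its dual Hopf superalgebra map $\phi^* : \kk[M',\xi'] \to \kk[M,\xi]$ and check compatibility with the coproduct on the generators $x^{m'}$ and $\xi'$; the counit and antipode compatibilities will then follow in either direction from the explicit formulas for $\Delta$, $S$, $\epsilon$ recalled above.

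For the forward implication, each of (i)--(iii) comes from a separate feature of the Hopf structure. Condition (i) is obtained by passing to the bosonic reduction: a supergroup morphism $\phi$ induces a homomorphism of ordinary tori $T_N \to T_{N'}$, which by classical torus theory (as used in Lemma~\ref{ImageLem}) must arise from a lattice map $\bar\phi : N \to N'$. Condition (ii) follows from the fact that $\phi^*$ sends the primitive element $\xi'$ to an odd primitive in $\kk[M,\xi]$; a short sign-careful computation of $\Delta(x^m \xi) = \Delta(x^m)\Delta(\xi)$ shows the cross-terms involving $(\xi\otimes\xi)(\xi\otimes 1)$ and $(\xi\otimes\xi)(1\otimes\xi)$ vanish, from which one reads off that the only odd primitives of $\kk[M,\xi]$ are the scalar multiples of $\xi$. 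Condition (iii) then emerges from applying $\Delta$ to $\phi^*(x^{m'}) = x^{\bar\phi^*(m')}$: using (ii) to rewrite $(\phi^* \otimes \phi^*)(\xi' \otimes \xi') = a^2\, \xi\otimes\xi$ and matching the $\xi \otimes \xi$-coefficients on both sides gives $\langle \bar\phi^*(m'), c \rangle = a^2 \langle m', c' \rangle$; the duality $\langle \bar\phi^*(m'), c \rangle = \langle m', \bar\phi_\kk(c) \rangle$ together with $m'$ ranging over $M'$ then delivers the desired identity in $N'_\kk$.

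For the reverse implication, the same three calculations run in the opposite direction: given (i), (ii), (iii), the explicit Hopf formulas show that $\Delta \phi^*(x^{m'}) = (\phi^* \otimes \phi^*) \Delta x^{m'}$ and $\Delta \phi^*(\xi') = (\phi^* \otimes \phi^*) \Delta \xi'$. Since $\phi^*$ is already a morphism of superalgebras, $\Delta$-compatibility on generators extends to all of $\kk[M',\xi']$, and the antipode and counit conditions reduce to an immediate check on $x^{m'}$ and $\xi'$.

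I expect the main obstacle to be tracking the Koszul signs when expanding $\Delta(x^m \xi)$ in the super tensor product, which is needed both for classifying the odd primitives in step (ii) and for matching the $\xi \otimes \xi$-component in step (iii); once these signs are correctly handled, all remaining manipulations are a routine unwinding of the Hopf structure already specified on $\kk[T_{N,c}]$.
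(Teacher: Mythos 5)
Your proposal is correct and follows essentially the same route as the paper: reduce to Hopf-superalgebra compatibility of $\phi^*$, check the coproduct identity on the generators $\xi'$ (forcing $\phi^*(\xi')=a\xi$, i.e.\ classifying odd primitives) and $x^{m'}$ (yielding $\langle\bar\phi^*(m'),c\rangle = a^2\langle m',c'\rangle$ and hence $\bar\phi_\kk(c)=a^2c'$ by nondegeneracy of the pairing), with the counit and antipode checks being routine. No gaps worth noting.
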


\begin{proof}
    Condition (i) is required as in the usual setting. In order for $\phi$ to be a morphism of supergroups, we require $\phi^*$ to be a morphism of Hopf superalgebras. In particular, we require $(\phi^* \otimes \phi^*) \circ \Delta' = \Delta \circ \phi^*$. Since each of these maps is a morphism of superalgebras, it suffices to check the identity on the generators $x^{m'}$ and $\xi'$. First, observe that
    \begin{align*}
        &(\phi^* \otimes \phi^*) \circ \Delta'(\xi') &&\Delta \circ \phi^*(\xi') \\
        &= (\phi^* \otimes \phi^*)(\xi' \otimes 1 + 1 \otimes \xi') &&= \Delta(\phi^*(\xi')) \\
        &= \phi^*(\xi') \otimes 1 + 1 \otimes \phi^*(\xi') &&= \sum_{i=1}^\ell a_i \Delta(x^{k_i} \xi) \\ 
        &= \left( \sum_{i=1}^\ell a_i x^{k_i} \xi \right) \otimes 1 + 1 \otimes \left( \sum_{i=1}^\ell a_i x^{k_i} \xi \right) &&= \sum_{i=1}^\ell a_i x^{k_i} \otimes x^{k_i} (\xi \otimes 1 + 1 \otimes \xi)
    \end{align*}
    where we write $\phi^*(\xi') = \sum_{i=1}^\ell a_i x^{k_i} \xi$ for some $a_i \in \kk$ and $k_i \in M$.
    Since the two sides are equal, it follows that all $k_i = 0$ and hence $\phi^*(\xi') = a \xi$ for some $a \in \kk$. Moreover, for $m' \in M$, we have
    \begin{align*}
        &(\phi^* \otimes \phi^*) \circ \Delta'(x^{m'}) &&\Delta \circ \phi^*(x^{m'}) \\
        &= (\phi^* \otimes \phi^*)(x^{m'} \otimes x^{m'}(1+ \langle m', c' \rangle) \xi' \otimes \xi') &&= \Delta(x^{\bar \phi^*(m')}) \\
        &= x^{\bar \phi^*(m')} \otimes x^{\bar \phi^*(m')} (1+\langle m', c' \rangle \phi^*(\xi') \otimes \phi^*(\xi')) &&= x^{\bar \phi^*(m')} \otimes x^{\bar \phi^*(m')} (1 + \langle \bar \phi^*(m'), c \rangle \xi \otimes \xi)
    \end{align*}
    so that $a^2 \langle m', c' \rangle = \langle \bar \phi^*(m'), c \rangle$ for all $m' \in M'$. Since $\langle\,,\rangle$ is a nondegenerate perfect pairing, we know $\langle \bar \phi^*(m'), c \rangle = \langle m', \bar \phi_\kk(c) \rangle$ and hence $\bar\phi_\kk(c) = a^2 c'$. The other axioms of a Hopf algebra morphism are routine to check, and yield no additional requirements.    
\end{proof}

\subsection{The moduli space}

One can verify that isomorphism classes of $(n|1)$-dimensional supertori for $c \neq 0$ depend only on the line through $c \in N_\kk$, up to automorphisms $\GL(T_N)$ of the torus $T_N$. That is, the moduli space of $(n|1)$-dimensional supertori is the disjoint union of $\PP(N_\kk) / \GL(T_N) \cong \PP^{n-1}/\GL(n,\ZZ)$ and a point (representing the $c=0$ case).

\subsection{Supertori with higher odd dimension}

If $T = (T_N, \t)$ is a supertorus with $\t = \Span_\kk ( x_1, ..., x_n ) \oplus \Span_\kk ( \theta_1, ..., \theta_s )$, then its supergroup structure is determined entirely by the parameters $c_{ij} \in N_\kk$ representing the coefficients of $\frac{1}{2}[\theta_i, \theta_j] = (c_{ij})_1 x_1 + ... + (c_{ij})_n x_n$. Since $[\cdot, \cdot]$ is symmetric on $\t_1$, the $c_{ij}$ arrange themselves into a symmetric $s \times s$ matrix (with entries in $N_\kk$).

\begin{definition}
    Let $N$ be a lattice, and $(c_{ij})$ a symmetric square matrix with entries in $N_\kk$. Denote by $T_{N, (c_{ij})}$ the supertorus with the given parameters.
\end{definition}

The supertorus $T_{N,(c_{ij})}$ has coordinate Hopf superalgeba $\kk[M, \xi_1, ..., \xi_s]$ where $M = N^*$ and $s$ is the order of the matrix $(c_{ij})$. The Hopf algebra structure is given by
\begin{align*}
    \Delta x^m &= x^m \otimes x^m \left(1 + \sum_{i,j=1}^s \langle m, c_{ij} \rangle \xi_i \otimes \xi_j \right) & S x^m &= x^{-m} & \epsilon x^m &= 1 \\
    \Delta \xi_i &= \xi_i \otimes 1 + 1 \otimes \xi_i &
    S \xi_i &= -\xi_i & \epsilon \xi_i &= 0
\end{align*}
and likewise the group law on $A$-points is given by
\begin{align*}
    &(t_1, ..., t_n \mid \xi_1, ..., \xi_s) \cdot (u_1, ..., u_n \mid \eta_1, ..., \eta_s) \\
    &= \left(t_1 u_1 \left(1+ \sum_{i,j=1}^s (c_{ij})_1 \xi_i \eta_j \right), ..., t_n u_n \left(1+ \sum_{i,j=1}^s (c_{ij})_n \xi_i \eta_j\right) \; \Bigg| \; \xi_1 + \eta_1, ..., \xi_s + \eta_s \right)
\end{align*}
for $t_i, u_i \in (A_0)^\times$ and $\xi_, \eta_i \in A_1$.

The following lemma generalizes Lemma \ref{MorphismOfSupertoriLemma}, and can be proven in a virtually identical way.

\begin{lemma}
    Let $\phi : T_{N, (c_{ij})} \to T_{N', (c_{ij}')}$ be a morphism of supervarieties. Then $\phi$ is a morphism of supergroups if and only if all of the following conditions hold.
    \begin{enumerate}[label=(\roman*)]
        \item There is a lattice map $\bar \phi : N \to N'$ such that $\phi|_{T_N} = \bar \phi \otimes 1_{\kk^\times}$.
        \item The dual map $\phi^* : \kk[M', \xi_1', ..., \xi_{s'}'] \to \kk[M, \xi_1, ..., \xi_s]$ satisfies $\phi^*(\xi'_i) = \sum_{j=1}^s a_{ij} \xi_j$ for some $a_{ij} \in \kk$.
        \item $\bar \phi_\kk = \bar \phi \otimes 1_\kk : N_\kk \to N_\kk'$ satisfies $\bar \phi(c_{k\ell}) = \sum_{i,j=1}^{s'} a_{ik} a_{j\ell} c_{ij}'$ for all $k,\ell = 1, ..., s$.
    \end{enumerate}
\end{lemma}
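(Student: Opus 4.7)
The plan is to mimic the proof of Lemma \ref{MorphismOfSupertoriLemma} almost verbatim, replacing the single odd generator $\xi$ by a family $\xi_1, \ldots, \xi_s$ and $\xi'$ by $\xi_1', \ldots, \xi_{s'}'$. Condition (i) is immediate from bosonic reduction and the classical characterization of torus morphisms, so the substantive content of the proof is to unpack the condition that $\phi^*$ intertwines the comultiplications $\Delta$ and $\Delta'$; by superalgebra-multiplicativity, this reduces to checking compatibility on the generators $\xi_i'$ and $x^{m'}$.

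For the odd generators, I would expand $\phi^*(\xi_i')$ as a sum of odd-degree monomials $b_{m,J}\, x^m \xi_J$ where $J \subseteq \{1, \ldots, s\}$ has odd cardinality and $b_{m,J} \in \kk$, then exploit that $\phi^*(\xi_i')$ must be primitive since $\xi_i'$ is. Computing $\Delta(x^m \xi_J)$ in $\kk[M, \xi_1, \ldots, \xi_s]$, one sees that non-primitive cross terms are produced whenever $m \neq 0$ (via the correction $\sum_{k,\ell} \langle m, c_{k\ell} \rangle \xi_k \otimes \xi_\ell$ in $\Delta x^m$) or $|J| \geq 3$ (via the expansion of $\prod_{j \in J}(\xi_j \otimes 1 + 1 \otimes \xi_j)$). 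Linear independence of monomials in $\kk[M, \xi_1, \ldots, \xi_s]^{\otimes 2}$ forces these cross terms to vanish separately, leaving $\phi^*(\xi_i') = \sum_j a_{ij} \xi_j$ for some $a_{ij} \in \kk$, which is condition (ii).

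For the even generators, I would apply $\phi^* \otimes \phi^*$ to $\Delta'(x^{m'})$ and compare with $\Delta(\phi^*(x^{m'})) = \Delta(x^{\bar\phi^*(m')})$. Substituting the formula from (ii) for $\phi^*(\xi_i')$ into the first expression and equating the coefficient of $x^{\bar\phi^*(m')} \otimes x^{\bar\phi^*(m')}(\xi_k \otimes \xi_\ell)$ on both sides yields $\sum_{i,j=1}^{s'} a_{ik} a_{j\ell}\langle m', c_{ij}' \rangle = \langle m', \bar\phi_\kk(c_{k\ell})\rangle$ for all $m' \in M'$, and nondegeneracy of the perfect pairing converts this to condition (iii). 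The converse is a direct reversal: given data satisfying (i)-(iii), the assignments $x^{m'} \mapsto x^{\bar\phi^*(m')}$ and $\xi_i' \mapsto \sum_j a_{ij}\xi_j$ extend uniquely to a superalgebra morphism whose compatibility with $\Delta$ follows from the same calculations run backwards, and compatibility with counit and antipode is immediate on generators.

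The main obstacle is the bookkeeping in the primitivity argument: unlike the $s=1$ case, where $\xi^2 = 0$ trivially eliminates all higher square-free monomials and only terms of the shape $x^m \xi$ survive, with multiple odd generators one must simultaneously rule out all odd-cardinality $J$ with $|J| \geq 3$ and all Laurent-monomial coefficients $x^m$ with $m \neq 0$. Correctly isolating the relevant bigraded pieces of $\kk[M, \xi_1, \ldots, \xi_s]^{\otimes 2}$ (by $\ZZ^2$-grading on $M \oplus M$ and by $(|I|, |J|)$ on the odd variables) and appealing to linear independence within each piece is the step that requires the most care.
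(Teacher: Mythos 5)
Your proposal is correct and follows essentially the same route as the paper, which proves this lemma "in a virtually identical way" to Lemma \ref{MorphismOfSupertoriLemma}: use primitivity of the $\xi_i'$ under comultiplication to force $\phi^*(\xi_i') = \sum_j a_{ij}\xi_j$, then compare $(\phi^*\otimes\phi^*)\Delta'(x^{m'})$ with $\Delta(x^{\bar\phi^*(m')})$ and invoke nondegeneracy of the pairing to obtain condition (iii). Your added care in ruling out monomials $x^m\xi_J$ with $m\neq 0$ or $|J|\geq 3$ (the only genuinely new feature beyond the $s=1$ case) is exactly the right bookkeeping.
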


If we define a \textbf{supertorus datum} as a tuple $(N, (c_{ij})_{i,j=1, ..., s})$, then we can formulate a category of supertorus data with the above objects, and morphisms $(\phi, (a_{ij})_{i=1, ..., s';\, j=1, ..., s}) : (N, (c_{ij})) \to (N', (c_{ij}'))$ as in the above lemma. Then the lemma yields the following result:
\begin{proposition}
    The category of algebraic supertori is equivalent to the category of supertorus data.
\end{proposition}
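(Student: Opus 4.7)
The plan is to establish the equivalence by constructing a functor $F$ from the category of supertorus data to the category of algebraic supertori, and verifying that it is essentially surjective and fully faithful. On objects, $F$ sends $(N, (c_{ij}))$ to the supertorus $T_{N,(c_{ij})}$ with the Hopf superalgebra structure described in the paper. On morphisms, $F$ sends $(\bar\phi, (a_{ij}))$ to the morphism of supervarieties determined by $\phi|_{T_N} = \bar\phi \otimes 1_{\kk^\times}$ and $\phi^*(\xi_i') = \sum_j a_{ij}\xi_j$; the preceding lemma (specifically, condition (iii)) guarantees that this is a morphism of supergroups.

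For functoriality, identity morphisms are obviously preserved, and composition reduces to verifying that if $(\bar\phi, A) : (N, (c_{ij})) \to (N', (c'_{ij}))$ and $(\bar\psi, B) : (N', (c'_{ij})) \to (N'', (c''_{ij}))$ are composable, then the composite lattice map $\bar\psi \circ \bar\phi$ together with the matrix product $BA$ satisfies condition (iii) and produces the correct composite map on coordinate Hopf superalgebras. The first claim is a direct double-sum expansion of $(\bar\psi_\kk \circ \bar\phi_\kk)(c_{k\ell})$ using the two individual versions of (iii); the second follows by evaluating $(\psi \circ \phi)^*$ on the generators $x^{m''}$ and $\xi_i''$ and matching terms.

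For essential surjectivity, given any supertorus $T = (T_N, \t)$, I would choose an arbitrary $\kk$-basis $\theta_1, \ldots, \theta_s$ of $\t_1$ and set $c_{ij} := \frac{1}{2}[\theta_i, \theta_j]$. Because $\t_0$ is central and $[\cdot,\cdot]$ is symmetric on odd elements, the bracket lands in $\t_0 \cong N_\kk$ and yields a symmetric matrix with entries in $N_\kk$, i.e.\ a supertorus datum. The Harish-Chandra pair of $T_{N,(c_{ij})}$ then coincides with that of $T$ via the identity on $T_N$ and the chosen basis identification on $\t_1$, so the two supertori are isomorphic.

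Finally, fully faithfulness is precisely the content of the lemma preceding the proposition: morphisms $T_{N,(c_{ij})} \to T_{N',(c'_{ij})}$ in the category of supergroups are in bijection with tuples $(\bar\phi, (a_{ij}))$ satisfying (i)--(iii), which are by definition the morphisms in the category of supertorus data. The main obstacle in executing the plan is the composition check in the functoriality step; this is tedious bookkeeping but strictly parallel to the computations already performed in the proof of the preceding lemma, so no genuinely new difficulty arises.
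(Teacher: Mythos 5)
Your proposal is correct and follows essentially the same route as the paper, which simply derives the equivalence from the preceding lemma on morphisms of supertori; you spell out the standard details (functoriality, essential surjectivity via the Harish-Chandra pair description, full faithfulness from the lemma) that the paper leaves implicit. No gaps of substance: the composition bookkeeping with the matrices and the identification $c_{ij}=\tfrac{1}{2}[\theta_i,\theta_j]$ work exactly as you describe.
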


\section{Affine toric supervarieties}\label{ATSVSection}

\begin{definition}\label{TSVDef}
    A \textbf{toric supervariety} is a supervariety $X$ containing a supertorus $T$ as an open subscheme such that the action of $T$ on itself extends to an action of $T$ on the whole supervariety.
\end{definition}

\begin{lemma}
    Let $X$ be a toric supervariety with torus $T$. Then the underlying variety $X_0$ is a toric variety with torus $T_0$.
\end{lemma}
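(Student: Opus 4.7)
The plan is to check the three defining conditions for $X_0$ to be a toric variety with torus $T_0$: that $X_0$ is an irreducible algebraic variety, that $T_0$ sits as a Zariski open subset of $X_0$, and that the self-action of $T_0$ extends to $X_0$. The first is essentially immediate from the construction: $|X_0| = |X|$ is irreducible, $X_0$ is reduced by the very definition of $\I$, and it inherits separatedness and finite type from $X$. That $T_0$ is an ordinary algebraic torus is built into the definition of a supertorus, and since $T$, being an affine algebraic supergroup, is integral, we have $T_0 = \overline{T}$, so no further reduction is needed on the $T$-side.

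For the open immersion $T_0 \hookrightarrow X_0$, I would show that the ideal sheaf $\I \subseteq \O_X$ restricts on $T$ to the ideal cutting out $T_0$. Because $T$ is a nonempty open of the irreducible $X$, it is dense, and so $\kk(X) = \kk(T)$; thus $\I|_T$ is by its definition the preimage of $(\kk(T)_1)$ in $\O_T$, which is exactly the ideal defining $T_0 \hookrightarrow T$. Combined with $|T_0| = |T| \subseteq |X| = |X_0|$ being open, this yields the desired open immersion of varieties.

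For the action, I would apply the functor $(-)_0$ to $\rho : T \times X \to X$. Functoriality of $(-)_0$ follows from the alternate description of $Y_0$ as the reduction of $\overline{Y}$: any morphism $\phi : Y \to Z$ induces $\overline{\phi} : \overline{Y} \to \overline{Z}$, and since $Y_0$ is reduced, the composite $Y_0 \hookrightarrow \overline{Y} \to \overline{Z}$ factors uniquely through $Z_0$. Applying this to $\rho$ produces $(T \times X)_0 \to X_0$, which I would then identify with a map $T_0 \times X_0 \to X_0$. The identification $(T \times X)_0 \cong T_0 \times X_0$ uses that bosonic reduction commutes with products (a direct computation on the ideal generated by odd elements of $\O_{T \times X}$) and that products of reduced schemes over $\kk$ are again reduced, so the reduction of $\overline{T} \times \overline{X} = T_0 \times \overline{X}$ is $T_0 \times X_0$. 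The group action axioms for $\rho_0$, and the fact that its restriction to $T_0 \times T_0$ is the group law on $T_0$, then follow formally from the corresponding statements for $\rho$.

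The main obstacle I anticipate is the product identification $(T \times X)_0 \cong T_0 \times X_0$; once this is in hand along with the functoriality of $(-)_0$, the remaining verifications are routine bookkeeping.
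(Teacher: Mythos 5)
Your proof is correct and is in essence the same argument the paper gives, which compresses everything into the single observation that the action of $T_0$ on $X$ preserves the defining ideal $\I$ of $X_0$, so the action descends. Your write-up simply supplies the details the paper leaves implicit (that $\I|_T$ cuts out $T_0$ so $T_0$ is open in $X_0$, functoriality of $(-)_0$, and the identification $(T \times X)_0 \cong T_0 \times X_0$), all of which check out.
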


\begin{proof}
    The action of $T_0$ on $X$ preserves the defining ideal $\I$ of $X_0$.
\end{proof}

In this paper, we will assume $X$ is $(n|1)$-dimensional, so by Proposition \ref{IntegralIfOneOddDim} we may use $X_0$ and $\overline{X}$ interchangeably. We will begin by assuming $X$ is affine, which by Proposition \ref{Affine} is equivalent to the assumption that $X_0$ is affine.

\subsection{From a finite set in a monoid}

Given a supertorus $T'=T_{N',c'}$, let $M' = (N')^*$ be the character lattice of $T_0'=T_{N'}$ and $\A = \{m_1, ..., m_r\}$ a finite set in $M'$. We wish to construct a super-analog of the variety $Y_{\A}$ described in section \ref{ATVBackground}. Define
\begin{align*}
    \A' = \B' &= \{m_i \in \A \mid \langle m_i, c' \rangle \neq 0\} = \{m_1, ..., m_q\}  \\ \A'' &= \{m_i \in \A \mid \langle m_i, c' \rangle = 0\} = \{m_{q+1}, ..., m_r\}
\end{align*}
so that $\A = \A' \sqcup \A''$. Moreover, let $\B'' = \{n_{q+1}, ..., n_s\}$ be a finite set in the monoid $\NN \A''$, and set $\B = \B' \sqcup \B''$. (We assume henceforth that $\B$ is nonempty so that we do not degenerate to the classical, purely-even case.) We will write $n_j = m_j$ for $j=1, ..., q$ so that $\B = \{n_1, ..., n_s\}$, and $n_{j} = \sum_{i=q+1}^r p_i^j m_i$ for $j=q+1, ..., s$ for some $p_{i}^j \in \NN$. We can do the same for $j=1, ..., q$, in which case we have $p_i^j = 1$ if $i=j$ and $p_i^j=0$ otherwise.

Now define a morphism of supervarieties
$$\Phi_{\A,\B} : T' \to \kk^{r|s}$$
at the level of $A$-points (for $A$ a superalgebra) by
$$\Phi_{\A,\B}(t \mid \xi) = (t^{m_1}, ..., t^{m_r} \;|\; t^{n_1}\xi, ..., t^{n_s}\xi) \in \kk^{r|s}(A)$$
for $t \in ((A_0)^\times)^{\dim N'}$ and $\xi \in A_1$. Denote by $J \subseteq \kk[x_1, ..., x_r, \xi_1, ..., \xi_s]$ the ideal of functions vanishing on the image of $\Phi_{\A,\B}$. Then the Zariski closure of the image is defined as the closed subvariety $V(J) := \Spec \kk[x_1, ..., x_r, \xi_1, ..., \xi_s] / J$ of $\kk^{r|s}$, henceforth referred to as $Y_{\A,\B}$.

\begin{proposition}\label{YA is a TSV Prop}
    Given $\A, \B \subseteq M$ as above, $Y_{\A,\B}$ is an affine toric supervariety whose supertorus is $\Phi_{\A,\B}(T') = T_{(\ZZ \A)^*, \bar \Phi_{\A,\B}(c')}$, where $\bar \Phi_{\A,\B}(c')$ is defined in as in Lemma \ref{MorphismOfSupertoriLemma}.
\end{proposition}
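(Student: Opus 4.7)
The plan is to establish four points in turn: (1) the image $\Phi_{\A,\B}(T')$ is a supertorus of the claimed form, (2) $Y_{\A,\B}$ satisfies the axioms of Definition \ref{SupervarietyDefinition}, (3) this supertorus is an open subscheme of $Y_{\A,\B}$, and (4) the multiplication of the supertorus extends to an action on $Y_{\A,\B}$.

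First I would compute the image of $\Phi_{\A,\B}^*$ as the subalgebra
\[
R := \kk[x^{m_1}, \ldots, x^{m_r},\, x^{n_1}\xi', \ldots, x^{n_s}\xi']
\]
of $\kk[T'] = \kk[M',\xi']$, so $\kk[Y_{\A,\B}] \cong R$. Localizing at the $x^{m_i}$ turns each $x^{n_j}$ into a unit (as $n_j \in \NN\A$), recovering $\xi' = x^{-n_j}(x^{n_j}\xi')$ and yielding $R[(x^{m_i})^{-1}] = \kk[\ZZ\A,\xi']$. A direct check on the Hopf structure of $\kk[T']$ shows that $\kk[\ZZ\A,\xi']$ is a Hopf sub-superalgebra, so its spectrum is a supertorus quotient $T''$ of $T'$. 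Since the odd generator of $T''$ maps to $\xi'$ under this inclusion, the scalar $a$ of Lemma \ref{MorphismOfSupertoriLemma} is $1$, forcing the parameter $c''$ of $T''$ to satisfy $c'' = \bar\Phi_{\A,\B}(c')$. Lemma \ref{ImageLem} then confirms $\Phi_{\A,\B}(T') = T''$ is a supertorus of the asserted form.

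Next I would verify the axioms of Definition \ref{SupervarietyDefinition} for $Y_{\A,\B}$. Irreducibility follows from that of $T'$ via continuity of $\Phi_{\A,\B}$; separatedness and finite type are automatic from the closed immersion $Y_{\A,\B} \hookrightarrow \kk^{r|s}$. For (iv) and (v) I would invoke Proposition \ref{IntegralIfOneOddDim}, which requires $\dim Y_{\A,\B} = (n|1)$: since every odd element of $R \subseteq \kk[M',\xi']$ is a $\kk[M']$-multiple of $\xi'$, and since the open locus $T''$ already has odd dimension $1$, the odd dimension of $Y_{\A,\B}$ is exactly $1$. The open embedding $T'' \hookrightarrow Y_{\A,\B}$ of step (3) is the localization above.

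Finally, extending the multiplication of $T''$ to an action on $Y_{\A,\B}$ amounts to showing that the coproduct $\Delta : \kk[T''] \to \kk[T''] \otimes \kk[T'']$ restricts to a coaction $R \to R \otimes \kk[T'']$. I would verify this on generators: the correction term $\langle m_i, c''\rangle\, x^{m_i}\xi' \otimes x^{m_i}\xi'$ in $\Delta x^{m_i}$ vanishes when $m_i \in \A''$ (as then $\langle m_i, c''\rangle = \langle m_i, c'\rangle = 0$) and otherwise is a multiple of $x^{n_i}\xi' \in R$; and a short computation gives $\Delta(x^{n_j}\xi') = x^{n_j}\xi' \otimes x^{n_j} + x^{n_j} \otimes x^{n_j}\xi' \in R \otimes \kk[T'']$, using that $x^{n_j} \in R$ whenever $n_j \in \NN\A$. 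The main obstacle is the careful identification $c'' = \bar\Phi_{\A,\B}(c')$, which requires tracing lattice pairings through the quotient $T' \twoheadrightarrow T''$; the remaining steps are inherited from the classical affine toric variety argument or amount to routine Hopf superalgebra computations.
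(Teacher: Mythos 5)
Your proposal is correct in outline but follows a genuinely different route from the paper. The paper never touches the image subalgebra: it equips the ambient $(\kk^\times)^r \times \kk^{0|s}$ with a supertorus structure (decomposing as $Q(1)^q \times T'''$) making $\Phi_{\A,\B}$ a supergroup morphism, applies Lemma \ref{ImageLem} to conclude that the image is a closed sub-supertorus of this ambient supertorus (hence equals $Y_{\A,\B}\cap T''$ and is open in $Y_{\A,\B}$), and then gets the action by the classical functor-of-points translation argument: $(t\mid\xi)\cdot Y_{\A,\B}(A)$ contains the image, hence contains $Y_{\A,\B}(A)$, and inverting $(t\mid\xi)$ gives equality. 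You instead work dually: identify $\kk[Y_{\A,\B}]$ with $R=\kk[x^{m_1},\dots,x^{m_r},x^{n_1}\xi',\dots,x^{n_s}\xi']\subseteq\kk[M',\xi']$, exhibit the torus as the localization $\kk[\ZZ\A,\xi']$, which is a Hopf sub-superalgebra of $\kk[T']$, so that the induced morphism has $a=1$ and $c''=\bar\Phi_{\A,\B}(c')$ by Lemma \ref{MorphismOfSupertoriLemma} (the same computation as the paper's final step), and extend the action by checking on generators that $\Delta$ restricts to a coaction $R\to R\otimes\kk[\ZZ\A,\xi']$. Your generator check is right, and the pivotal point --- the correction term in $\Delta x^{m_i}$ stays in $R$ because $m_i\in\A'$ forces $n_i=m_i$, while $m_i\in\A''$ kills it --- is precisely where the hypothesis $\B\supseteq\A'$ enters. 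Your route buys an explicit description of the coordinate ring and of the coaction, essentially anticipating Lemma \ref{UniqueGenSetLem(b)}(c); the paper's route avoids knowing $\kk[Y_{\A,\B}]$ at all (its ideal is only computed afterwards, in Proposition \ref{IdealProposition}), at the cost of building the auxiliary supertorus structure on $(\kk^\times)^r\times\kk^{0|s}$.

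Two repairs are needed. First, the identification $\kk[Y_{\A,\B}]\cong R$ deserves a sentence: the vanishing ideal $J$ equals $\ker\Phi_{\A,\B}^*$, which one sees by evaluating functions at the universal point over $A=\kk[M',\xi']$; the paper only computes this ideal later, so you cannot simply quote it. Second, you cannot invoke Proposition \ref{IntegralIfOneOddDim} to obtain conditions (iv) and (v) of Definition \ref{SupervarietyDefinition}: that proposition presupposes that its input is already a supervariety, so the citation is circular. The fix is immediate and in the same spirit: check directly that $R$ is an integral superdomain, since $R/(R_1)\cong\kk[\NN\A]$ is a domain and any $f=f_0+\xi' f_1$ with $f_0\neq 0$ annihilating a nonzero $g$ forces $g=0$ (using that $\kk[M']$ is a domain), so all zero divisors lie in $(R_1)$; integrality of $R$ then yields (iv) and (v) for $\Spec R$.
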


\begin{proof}
    Let $T'' = (\kk^\times)^r \times \kk^{0|s} \subset \kk^{r|s}$ as a supervariety without any assumption of a supergroup structure. We can view $\Phi_{\A,\B}$ as a map of supergroups $T' \to T''$ in the following way: We may freely change the underlying classical variety of the codomain to $(\kk^\times)^r$. Now it remains to impose a supertorus structure on $T''$, so let $A$ be a superalgebra and let $t,u \in ((A_0)^\times)^{\dim N'}$ and $\xi,\eta \in A_1$. If $\Phi_{\A,\B}$ is to be a group homomorphism, we must have
    \begin{align*}
        &(t^{m_1}, ..., t^{m_r} \mid t^{n_1} \xi, ..., t^{n_s} \xi) \cdot (u^{m_1}, ..., u^{m_r} \mid u^{n_1} \eta, ..., u^{n_s} \eta) \\
        &= \Phi_{\A,\B}(t \mid \xi) \cdot \Phi_{\A,\B}(u \mid \eta) \\
        &= \Phi_{\A,\B}((t \mid \xi) \cdot (u \mid \eta)) \\
        &= \Phi_{\A,\B}(t_1u_1(1+c_1' \xi\eta), ..., t_n u_n(1+c_n' \xi \eta) \mid \xi+\eta) \\
        &= \left(t^{m_1}u^{m_1} (1+\langle m_1, c' \rangle \xi \eta), ..., t^{m_r}u^{m_r} (1+\langle m_r, c' \rangle \xi \eta) \mid t^{n_1}u^{n_1} (\xi+\eta), ..., t^{n_s}u^{n_s} (\xi+\eta) \right)
    \end{align*}
    where we identify $T_0'$ with $(\kk^\times)^n$ and $M'$ with $\ZZ^n$. Thus, a potential group law on $T''(A)$ is $$(a_1, ..., a_r \mid \alpha_1, ..., \alpha_s) \cdot (b_1, ..., b_r \mid \beta_1, ..., \beta_s) = (d_1, ..., d_r \mid \delta_1, ..., \delta_s)$$ where
    \begin{align*}
        d_i &= a_i b_i + \langle m_i, c' \rangle \alpha_i \beta_i &&\text{for } i=1, ..., q \\
        d_i &= a_i b_i &&\text{for } i=q+1, ..., r \\
        \delta_j &= b^{p^j} \alpha_j + a^{p^j} \beta_j &&\text{for } j=1, ..., s.
    \end{align*}
    For $j=1, ..., q$, the latter simplifies to
    \begin{align*}
        \delta_j &= b_j \alpha_j + a_j \beta_j.
    \end{align*}
    It is routine to check that this operation makes $T''(A)$ into a group with identity element $(1, ..., 1 \mid 0, ..., 0)$, and moreover $T''$ into an $(r|s)$-dimensional supertorus  which decomposes as $Q(1)^q \times T'''$ for some $(r-q | s-q)$-dimensional supertorus $T'''$.
    
    It follows by Lemma \ref{ImageLem} that $\Phi_{\A,\B}(T')$ is a supertorus closed in $T''$, so $Y_{\A,\B} \cap T'' = \Phi_{\A,\B}(T')$ because $Y_{\A,\B}$ is the closure of $\Phi_{\A,\B}(T')$. As in the classical setting, $\Phi_{\A,\B}(T')$ is open in $Y_{\A,\B}$ and irreducible, so $Y_{\A,\B}$ is also irreducible.

    We now verify that $\Phi_{\A,\B}(T')$ acts on $Y_{\A,\B}$ via the functor of points. Since $\Phi_{\A,\B}(T') \subset T''$, the action is by homeomorphisms and so takes supervarieties to supervarieties. For a superalgebra $A$, it follows that $\Phi_{\A,\B}(T')(A) = (t, \xi) \cdot \Phi_{\A,\B}(T')(A) \subseteq (t, \xi) \cdot Y_{\A,\B}(A)$ for $t \in (A_0^*)^{\dim N'}, \xi \in A_1$. Hence $(t,\xi) \cdot Y_{\A,\B}(A)$ is the functor of points of a supervariety containing $\Phi_{\A,\B}(T')$, meaning $Y_{\A,\B}(A) \subseteq (t,\xi) \cdot Y_{\A,\B}(A)$ by definition of the Zariski closure. Replacing $(t,\xi)$ with its inverse $(t^{-1}, -\xi)$ yields the reverse inclusion, so in fact $Y_{\A,\B}(A) = (t,\xi)Y_{\A,\B}(A)$. Therefore $Y_{\A,\B}$ is an affine toric supervariety with torus $\Phi_{\A,\B}(T')$.

    As in the usual setting, the even part $\Phi_{\A,\B}(T')_0$ of the torus will have character lattice $\ZZ \A$. Since we assume $s>0$, we also have one dimension of odd functions (generated by $\xi$). Viewing $\Phi_{\A,\B}$ as a morphism $T' \to \Phi_{\A,\B}(T')$, we see that $\Phi_{\A,\B}^*(\xi) = \xi$ and hence the additional parameter $c$ is $\bar \Phi_{\A,\B}(c')$ by Lemma \ref{MorphismOfSupertoriLemma}.
\end{proof}

\subsection{The ideal of an affine toric supervariety}

As in the usual case, we are interested in describing the ideal $J(Y_{\A,\B})$ of functions that vanish on $Y_{\A,\B}$. The map $\Phi_{\A,\B}$ can be viewed as a map to an $(r|s)$-dimensional torus, inducing a morphism of lattices
$$\hat \Phi_{\A,\B} : \ZZ^{r|s} \to M' \oplus \ZZ^{0|1}$$
mapping the coordinate vectors of the domain as
\begin{align*}
    e_i &\mapsto (m_i \mid 0) &&i=1, ..., r \\
    e_j' &\mapsto (n_j \mid 1) &&j=1, ..., s.
\end{align*}
The kernel $L$ of this morphism records the relations among the $m_i$ and $n_j$, so if $(\ell \mid \ell') \in L$, then $\sum_{i=1}^r \ell_i (m_i \mid 0) + \sum_{j=1}^s \ell_j' (n_j \mid 1) = 0$. Notice that $\sum_{j=1}^s \ell_j' = 0$ due to the ``odd" coordinate.

For $\ell = (\ell_1, ..., \ell_r \mid \ell_1', ..., \ell_s') \in L$, set
\begin{align*}
    \ell_+ &= \sum_{\ell_i>0} \ell_i e_i &\ell_- &= -\sum_{\ell_i<0} \ell_i e_i &\ell_+' &= \sum_{\ell_j'>0} \ell_j' e_j' &\ell_-' &= -\sum_{\ell_j'<0} \ell_j' e_j'
\end{align*}
so that $\ell = (\ell_+ - \ell_-) + (\ell_+' - \ell_-')$ and $\ell_\pm \in \NN^r, \ell_\pm' \in \NN^s$. It follows immediately that the binomial
\begin{align*}
    x^{\ell_+} \xi^{\ell_+'} - x^{\ell_-} \xi^{\ell_-'}
\end{align*}
vanishes on $\Phi_{\A,\B}(T')$ and hence also on its closure $Y_{\A,\B}$. Since each $\xi_i$ is sent to a multiple of $\xi$, the monomials $\xi_i \xi_j$ also vanish on $Y_{\A,\B}$. Thus, it is sufficient to consider only those binomials which contain either no $\xi_j$ or exactly one in each term.

\begin{proposition}\label{IdealProposition}
    The ideal of the affine toric supervariety $Y_{\A,\B} \subseteq \kk^{r|s}$ is
    \begin{align*}
        J(Y_{\A,\B}) &= (x^{\ell_+} \xi^{\ell_+'} - x^{\ell_-} \xi^{\ell_-'} \mid \ell \in L) + (\xi_i \xi_j) \\
        &= (x^{k_+} \xi^{k_+'} - x^{k_-} \xi^{k_-'} \mid k_\pm \in \NN^r, k_\pm' \in \NN^s \text{ and } k_+ - k_- + k_+' - k_-' \in L) + (\xi_i \xi_j)
    \end{align*}
\end{proposition}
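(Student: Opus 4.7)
The plan is to prove the two equalities in sequence. The two right-hand sides describe the same ideal by a short manipulation: any generator $x^{k_+}\xi^{k_+'} - x^{k_-}\xi^{k_-'}$ of the second form equals, up to a sign from reordering odd variables, the product $x^{k_{\min}} \xi^{k'_{\min}} (x^{\ell_+}\xi^{\ell_+'} - x^{\ell_-}\xi^{\ell_-'})$, where $\ell := (k_+ - k_-) + (k_+' - k_-') \in L$ and $k_{\min}, k'_{\min}$ denote the componentwise minima. The supports of $\ell_\pm$ and $k_{\min}$ are disjoint, and likewise for the odd parts, so the products of $\xi$'s are nonzero; hence the two ideals agree.

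For the inclusion $\supseteq$ in the main equality, $\xi_i \xi_j$ vanishes under $\Phi_{\A,\B}^*$ because $\Phi_{\A,\B}^*(\xi_i \xi_j) = t^{n_i+n_j}\xi^2 = 0$. For a binomial generator indexed by $\ell \in L$: when $|\ell_+'| \geq 2$ both sides vanish from $\xi^2 = 0$, and otherwise the defining conditions $\sum_i \ell_i m_i + \sum_j \ell_j' n_j = 0$ and $\sum_j \ell_j' = 0$ (the two components of $\ell$ lying in $L$) force $|\ell_+'| = |\ell_-'|$ and equate the $t$- and $\xi$-exponents on both sides after pullback.

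The main work lies in the inclusion $\subseteq$. Take $f \in J(Y_{\A,\B})$. Modulo $(\xi_i \xi_j)$ we may write $f \equiv g(x) + \sum_{j=1}^s \xi_j h_j(x)$ for polynomials $g, h_j \in \kk[x_1, \ldots, x_r]$. Applying $\Phi_{\A,\B}^*$ and separating the even and odd components of the image in $\kk[M', \xi]$ yields $g(t^{m_1}, \ldots, t^{m_r}) = 0$ and $\sum_j t^{n_j} h_j(t^{m_1}, \ldots, t^{m_r}) = 0$. The first equation is purely classical: it forces $g$ to lie in the toric ideal for $\{m_1, \ldots, m_r\}$ of Section \ref{ATVBackground}, whose generators $x^{\ell_+} - x^{\ell_-}$ (ranging over $\ell \in \ZZ^r$ with $\sum \ell_i m_i = 0$) are also generators of our ideal upon extending by zeros in the odd coordinates. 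For the second, expand $h_j = \sum_k a_{j,k} x^k$ and group monomials in $\kk[M']$ by the exponent $m := n_j + \sum_i k_i m_i$. Linear independence of characters in $\kk[M']$ forces $\sum_{(j,k) \in S_m} a_{j,k} = 0$ on each fiber $S_m := \{(j,k) : n_j + \sum_i k_i m_i = m\}$; choosing any base $(j_0, k^0) \in S_m$ lets one rewrite $\sum_{S_m} a_{j,k} \xi_j x^k$ as a sum of differences $\xi_j x^k - \xi_{j_0} x^{k^0}$, each of which corresponds to the lattice element $(k - k^0 \mid e_j' - e_{j_0}') \in L$ and hence is one of our generators. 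The main obstacle is organizing this second computation cleanly, since it is a twisted analog of the classical toric-ideal argument that must simultaneously track the even exponent relations and the odd-coordinate balance.
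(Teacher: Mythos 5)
Your proof is correct, but it takes a genuinely different route from the paper's for the main inclusion $J(Y_{\A,\B}) \subseteq J_L + (\xi_i\xi_j)$. The paper runs the Gr\"obner-style descent from the classical case: choose a lexicographic order, take a counterexample $f$ with minimal leading monomial $x^a$ or $x^a\xi_j$, use the vanishing of $\Phi_{\A,\B}^*(f)$ to find a smaller monomial $x^b$ or $x^b\xi_k$ with the same image, and subtract the corresponding binomial to contradict minimality. You instead put $f$ in the normal form $g(x) + \sum_j \xi_j h_j(x)$ modulo $(\xi_i\xi_j)$, split $\Phi_{\A,\B}^*(f)=0$ into its even and odd components, reduce the even component to the classical toric ideal, and handle the odd component by grouping monomials over each fiber $S_m$ and using linear independence of characters to telescope into binomial generators. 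Your route is arguably cleaner in one respect: it avoids the paper's slightly awkward use of a ``monomial order'' on a ring with odd variables (which the paper itself flags as not quite a monomial order), replacing the induction on leading terms with a single application of linear independence of characters. The paper's route has the virtue of being a verbatim transplant of the argument in \cite{CLS}. Two small points in your write-up are worth tightening: in the factorization $x^{k_{\min}}\xi^{k'_{\min}}(x^{\ell_+}\xi^{\ell_+'}-x^{\ell_-}\xi^{\ell_-'})$ the odd supports of $k'_{\min}$ and $\ell'_\pm$ need not be disjoint (e.g.\ $k'_+=2, k'_-=1$ in one coordinate), and the reordering signs on the two terms can differ; but every discrepancy this creates lies in $(\xi_i\xi_j)$, which both ideals contain, so the equality of the two presentations survives --- you should say this explicitly rather than claim the products of $\xi$'s are nonzero.
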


\begin{proof}
    The proof is largely the same as in the classical case: Firstly, the two ideals on the right are equal by an identical proof, and we let $J_L \subseteq J(Y_{\A,\B})$ denote this ideal. Fixing identifications $T_0' \cong (\kk^\times)^n$ and $M' \cong \ZZ^n$, the map $\Phi_{\A,\B} : T' \to \kk^{r|s}$ is given by monomials $t^{m_i}$ for $i=1, ..., r$ and $t^{n_j} \xi$ for $j=1, ..., s$ in the variables $t_1, ..., t_n, \xi$.
    
    Now pick a monomial order $\geq$ on $\kk[x_1, ..., x_r, \xi_1, ..., \xi_s]$, say lexicographic order with $x_1>...>x_r>\xi_1>...>\xi_s$. While this is not precisely a monomial order in the traditional sense due to the presence of odd variables, we will not invoke any properties that do not hold in this situation. If $J_L \neq J(Y_{\A,\B})$, we can pick $f \in J(Y_{\A,\B}) \backslash J_L$ with minimal leading monomial $x^a = \prod_{i=1}^r x_i^{a_i}$ or $x^a \xi_j = \left( \prod_{i=1}^r x_i^{a_i} \right) \xi_j$. We may assume without loss of generality that $f$ has leading coefficient 1, and that $f$ contains no summands in $(\xi_i \xi_j)$.

    Since $f(t^{m_1}, ..., t^{m_r}, t^{n_1}\xi, ..., t^{n_s}\xi) \equiv 0$ as a polynomial in $t_1, ..., t_n, \xi$ and $f$ contains no summands in $(\xi_i \xi_j)$, there must be cancellation involving the leading term. Hence $f(x_1, ..., x_r, \xi_1, ..., \xi_s)$ contains a monomial $x^b < x^a$ or $x^b \xi_k < x^a \xi_j$, respectively, such that
    \begin{align*}
        \prod_{i=1}^r (t^{m_i})^{a_i} = \prod_{i=1}^r (t^{m_i})^{b_i} \quad\quad\text{or} \quad\quad \left( \prod_{i=1}^r (t^{m_i})^{a_i}\right) t^{n_j} \xi= \left(\prod_{i=1}^r (t^{m_i})^{b_i}\right) t^{n_k} \xi,
    \end{align*}
    implying
    \begin{align*}
        \sum_{i=1}^r a_i m_i = \sum_{i=1}^r b_i m_i \quad\quad\text{or} \quad\quad \sum_{i=1}^r a_i m_i + n_j = \sum_{i=1}^r b_i m_i + n_k.
    \end{align*}
    Therefore $a-b \in L$ or $(a+e_j') - (b+e_k') \in L$, so by the second description of $J_L$ it follows that $f - x^a + x^b$ or $f-x^a\xi_j + x^b\xi_k$ also lies in $J(Y_{\A,\B}) \backslash J_L$ but has strictly smaller leading term. With this contradiction, we are finished.
\end{proof}

\begin{remark}
In the classical setting, there is a nice classification of affine toric varieties via toric ideals (i.e.\ prime ideals generated by binomials arising from a lattice as above). A super-analog of this result is possible but rather less interesting, since it fails to describe the torus action on the corresponding supervariety, and in particular the parameter $c \in N_\kk$. Additionally, our constraint of one odd dimension makes the description quite cumbersome.
\end{remark}

\subsection{From an affine semigroup}

Recall from section \ref{ATVBackground} that an affine toric variety is isomorphic to $\Spec \kk[S]$ for some affine semigroup $S$, and every $\Spec \kk[S]$ has the structure of an affine toric variety. The generalization to the super case with one odd dimension requires the same data with one minor addition.

Let $T_N$ be the ordinary torus with character lattice $N^* = M = \ZZ S$. For $c \in N_\kk$, let $J_c$ be the $T_N$-invariant ideal $(S - \ker c) = (x^m \in \kk[S] \mid \langle m, c \rangle \neq 0)$ of $\kk[S]$, where $S-\ker c$ denotes the set difference. Note that $J_c \neq 0$ unless $c=0$.

\begin{lemma}\label{UniqueGenSetLem(b)}
\begin{enumerate}
    \item Let $S$ be an affine semigroup, and let $J \subseteq \kk[S]$ be a nonzero $T_N$-invariant ideal that contains $J_c$. Then $\Spec (\kk[S] \oplus \xi J)$ is an affine toric supervariety whose torus is $T_{N,c}$.

    \item The ideal $J$ has a finite generating set consisting of monic monomials. Moreover, there is a minimal generating set of this type, unique up to multiplication by invertible monomials in $\kk[S]$. In particular, if $S$ is a pointed semigroup (i.e.\ the only unit is 1), then there is a unique minimal generating set.
    
    \item Suppose $S = \NN \A$ for a finite set $\A$ in a lattice $M'$ ($\supset M = \ZZ S$) such that $\A = \A' \sqcup \A''$ as above where $\A'' = \{m \in \A \mid \langle m,c \rangle_{M \times N_\kk} = 0\}$. Let $\B''$ be a finite set in $\NN \A''$ such that $J = (x^{n_j} \mid n_j \in \A' \sqcup \B'')$. Then $\Spec (\kk[S] \oplus \xi J) \cong Y_{\A,\B}$.
\end{enumerate}
\end{lemma}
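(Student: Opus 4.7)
The plan is to prove the three parts in order: (a) gives the structural result, (b) is a Noetherian/combinatorial statement about monomial ideals, and (c) identifies this construction with the one from $Y_{\A,\B}$.

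For (a), I would view $R := \kk[S] \oplus \xi J$ as a super-subalgebra of $\kk[T_{N,c}] = \kk[M] \oplus \xi \kk[M]$; the multiplication $(a+\xi b)(c+\xi d) = ac + \xi(ad+bc)$ is well-defined precisely because $J$ is an ideal of $\kk[S]$. The key content is to verify that the comultiplication $\Delta$ on $\kk[T_{N,c}]$ restricts to a coaction $R \to \kk[T_{N,c}] \otimes R$, which supplies the $T_{N,c}$-action. For $x^m \in \kk[S]$ one has
\[
\Delta(x^m) = x^m \otimes x^m + \langle m,c\rangle\, x^m\xi \otimes x^m \xi,
\]
so the second summand lies in $\kk[T_{N,c}] \otimes R$ exactly when $x^m \in J$ whenever $\langle m,c\rangle \neq 0$ --- which is the hypothesis $J_c \subseteq J$. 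For $\xi x^n$ with $x^n \in J$, the identity $\Delta(\xi x^n) = \xi x^n \otimes x^n + x^n \otimes \xi x^n$ (using $\xi^2=0$ in the supertensor product) lies in $\kk[T_{N,c}] \otimes R$ automatically. The remaining axioms of a toric supervariety --- integral superdomain structure, finite generation, and $T_{N,c}$ as a dense open subscheme --- follow because $R$ sits inside the integral superdomain $\kk[T_{N,c}]$, $\kk[S]$ is Noetherian (so $J$ is finitely generated as an ideal, by part (b)), and localizing $\Spec R$ along the characters of $S$ recovers $T_{N,c}$.

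For (b), finiteness follows from Noetherianity of $\kk[S]$ together with $T_N$-invariance: any finite ideal generating set can be replaced by the monic monomials appearing in its $M$-graded decomposition, each of which lies in $J$. For minimality, I would work with the semigroup ideal $T = \{m \in S : x^m \in J\}$ and the preorder $m \preceq n$ iff $n-m \in S$, which descends to a partial order on $T/\!\sim$ where $m \sim n$ iff $m - n \in S^\times$. The minimal elements of $T/\!\sim$ form the canonical minimal generating set, and any other minimal monomial generating set must consist of representatives of these classes, yielding uniqueness up to invertible monomials (and literally uniqueness when $S$ is pointed). Finiteness of minimal elements is again Noetherianity: an infinite antichain would produce a strictly ascending chain of monomial ideals in $\kk[S]$.

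For (c), I would directly compute the image of $\Phi_{\A,\B}^* : \kk[x_1,\dots,x_r,\xi_1,\dots,\xi_s] \to \kk[T']$, which is the coordinate ring of $Y_{\A,\B}$ by definition. The even generators $t^{m_i}$ generate the subring $\kk[\NN\A] = \kk[S]$; the odd generators $t^{n_j}\xi$ have pairwise products zero (since $\xi^2 = 0$), so the odd part of the image is the $\kk[S]$-submodule $\xi \cdot (x^{n_j} : j = 1,\dots,s)\kk[S]$, which equals $\xi J$ by the hypothesis on $\B$. Thus $\kk[Y_{\A,\B}] = \kk[S] \oplus \xi J$, giving the desired isomorphism, with compatible torus structures by Lemma \ref{MorphismOfSupertoriLemma} as in Proposition \ref{YA is a TSV Prop}. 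I expect the main obstacle to be the Hopf-superalgebra computation in (a): tracking signs in the supertensor product and applying $\xi^2 = 0$ at the right moments is where a subtle error could slip in. Part (b) has a standard Dickson/Noetherian flavor, and (c) reduces to tracing the images of generators once (a) supplies the correct algebraic model.
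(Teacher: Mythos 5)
Your proof is correct, but on part (a) it takes a genuinely different route from the paper. The paper deduces (a) from (c) together with Proposition~\ref{YA is a TSV Prop}: one chooses a finite monomial generating set of $J$ (part (b)), discards the generators already lying in $J_c$ to obtain $\B''\subseteq \NN\A''$, and realizes $\Spec(\kk[S]\oplus\xi J)$ as $Y_{\A,\B}$, inheriting the torus action from the closure-of-the-image construction. Your argument is intrinsic: you verify that the comultiplication of $\kk[T_{N,c}]$ restricts to a coaction on the subalgebra $R=\kk[S]\oplus\xi J$, which makes completely transparent that $J_c\subseteq J$ is exactly the condition for $\Delta(x^m)=x^m\otimes x^m+\langle m,c\rangle\,x^m\xi\otimes x^m\xi$ to land in $\kk[T_{N,c}]\otimes R$; your sign bookkeeping for $\Delta(x^n\xi)$ is right. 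This buys a self-contained proof of (a) not routed through the ambient $\kk^{r|s}$, while the paper's route buys brevity given that Propositions \ref{YA is a TSV Prop} and \ref{IdealProposition} are already available. Parts (b) and (c) essentially coincide with the paper: your poset of divisibility classes modulo units of $S$ is a cleaner packaging of the paper's direct comparison of two monomial generating sets, and in (c) your computation of $\Im\Phi_{\A,\B}^*$ is the paper's computation with the map $\pi$. The one place to add a word is your claim that $\Im\Phi_{\A,\B}^*$ ``is the coordinate ring of $Y_{\A,\B}$ by definition'': this amounts to the identification $\ker\Phi_{\A,\B}^*=J(Y_{\A,\B})$, which holds under the functor-of-points reading of ``vanishing on the image'' (evaluate at the tautological point) and is precisely what the paper invokes Proposition~\ref{IdealProposition} for --- an elision, not a gap. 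Two cosmetic remarks: to exhibit $T_{N,c}$ as an open subscheme it is cleanest to invert the single monomial $x^{m_0}$ with $m_0$ a sum of generators of $S$ (any nonzero monomial of $J$ then becomes a unit, so the localization is all of $\kk[M,\xi]$), and the finite generation used in (a) needs only Noetherianity of $\kk[S]$, not part (b).
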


\begin{proof}
\begin{enumerate}
    \item Follows from (c) and Proposition \ref{YA is a TSV Prop}.
    
    \item Since $J$ is $T_N$-invariant, it has a generating set by monic monomials. For finiteness, notice that $\kk[S]$ is a quotient of some $\kk[y_1, ..., y_k]$, which is Noetherian. It therefore remains to prove uniqueness.
    
    \tab Let $\S_1$ and $\S_2$ be two generating sets by monic monomials, and let $x^{m_1} \in \S_1 \backslash \S_2$. Since $x^{m_1} \in J$, we must have $x^{m_1} = a x^{m_2}$ for some monomial $a \in \kk[S]$ and $m_2 \in \S_2$. Since $x^{m_2} \in J$, we must have $x^{m_2} = b x^{m_3}$ for some monomial $b \in \kk[S]$ and $m_3 \in \S_1$. Hence $x^{m_1} = ab x^{m_3}$ and so either $m_1 = m_3$ or $x^{m_1}$ is redundant in $S_1$. In the event that $m_1 = m_3$, it holds that $a=b^{-1}$, so $x^{m_1} \in \S_1$ and $x^{m_2} \in \S_2$ differ only by an invertible monomial.

    \item We first observe that the defining condition $\langle m,c \rangle_{M \times N_\kk} = 0$ of $\A''$ is equivalent to $\langle m, c' \rangle_{M' \times N_\kk'} = 0$ because $M = \ZZ \A$ is a sublattice of $M'$ via $\bar \phi^* : M \to M'$ and likewise $N$ is a quotient of $N'$ via $\bar \phi : N' \to N$.
    
    \tab The morphism $\Phi_{\A,\B} : T_{(M')^*, c'} \to \kk^{r|s}$ corresponds to a map
    $$\pi : \kk[x_1, ..., x_r, \xi_1, ..., \xi_s] \to \kk[M', \xi]$$
    given by $x_i \mapsto x^{m_i}$ and $\xi_j \mapsto x^{n_j} \xi$. The image of $\pi$ is exactly $\kk[S] \oplus \xi J$ because $J$ is the ideal generated by the $x^{n_j}$. Hence, if we can show that $\ker \pi$ is the ideal $J(Y_{\A,\B})$, then
    \begin{align*}
        \kk[S] \oplus \xi J &= \Im \pi \\
        &\cong \kk[x_1, ..., x_r, \xi_1, ..., \xi_s]/ \ker \pi \\
        &= \kk[x_1, ..., x_r, \xi_1, ..., \xi_s]/ J(Y_{\A,\B}) \\
        &= \kk[Y_{\A,\B}]
    \end{align*}
    and so (c) will hold. But this follows by Proposition \ref{IdealProposition}. \qedhere
\end{enumerate}
\end{proof}

The condition that $J$ is nonzero is required only in the case $c=0$, in order to ensure that the resulting supervariety is not purely even.

\begin{example}
    If $S = \NN$ and $c = 1$, then we obtain the supertorus $T_{\ZZ,1} \cong Q(1)$ and the ordinary toric variety $\Spec \kk[\NN] \cong \Spec \kk[x] \cong \AA^1$. We observe that $J_c = (x)$, so $\Spec \kk[x, x\xi]$ and $\Spec \kk[x, \xi]$ are two toric supervarieties with torus $Q(1)$. While these two spaces are isomorphic as supervarieties, they admit different torus actions and are in fact not isomorphic as toric supervarieties. For more on morphisms of toric supervarieties, see Section \ref{MorphismsSection}.
\end{example}

The data of the ideal $J$ is equivalent to the data of its minimal generating set by monomials, which can be characterized as follows.

\begin{definition}\label{AdmissibleDef}
    Let $S$ be an affine semigroup and $T_{(\ZZ S)^*,c}$ a supertorus. For $m,m' \in S$, we write $m' \leq_S m$ if there exists $m'' \in S$ such that $m'+m''=m$. A nonempty finite set $\B \subset S$ is \textbf{$c$-admissible} if:
    \begin{enumerate}[label=(\roman*)]
        \item For all $m \in S - \ker c$, there is $b \in \B$ such that $b \leq_S m$.
    \end{enumerate}
    It is moreover \textbf{minimal} if:
    \begin{enumerate}[label=(\roman*)]
        \setcounter{enumi}{1}
        \item If $a,b \in \B$ satisfy $a \leq_S b$, then $a=b$.
    \end{enumerate}
\end{definition}

\begin{lemma}
    The nonempty finite set $\B \subset S$ is (minimal) $c$-admissible if and only if it is a (minimal) generating set for a nonzero ideal $J \subseteq \kk[S]$ containing $J_c$.
\end{lemma}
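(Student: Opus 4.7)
The plan is to reduce both equivalences in the lemma to the following dictionary between $T_N$-invariant ideals of $\kk[S]$ and subsets of $S$ upward-closed under $\leq_S$. Since $T_N$ acts diagonally on $\kk[S] = \bigoplus_{m \in S} \kk \cdot x^m$, a $T_N$-invariant ideal is exactly the $\kk$-span of some $\leq_S$-upward-closed subset of $S$. The key observation is that $x^m$ lies in the monomial ideal $(x^b : b \in \B)$ if and only if there exists $b \in \B$ with $b \leq_S m$. The nontrivial direction is verified by writing $x^m = \sum_i f_i x^{b_i}$ with $f_i \in \kk[S]$ and $b_i \in \B$, expanding each $f_i = \sum_{n \in S} a_{i,n} x^n$ in the monomial basis, and reading off the coefficient of $x^m$: some pair $(i,n)$ must satisfy $n + b_i = m$ with $a_{i,n} \neq 0$, forcing $m - b_i \in S$.

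With this dictionary in hand, I would first establish the $c$-admissibility equivalence. Assuming $\B$ is $c$-admissible, set $J = (x^b : b \in \B)$; for each $m \in S - \ker c$, admissibility produces $b \in \B$ with $b \leq_S m$, so $x^m = x^{m-b} x^b \in J$, giving $J \supseteq J_c$, and $J$ is nonzero because $\B$ is nonempty. Conversely, if $\B$ generates a nonzero ideal $J \supseteq J_c$, then $x^m \in J$ for every $m \in S - \ker c$, and the observation produces $b \in \B$ with $b \leq_S m$, establishing condition (i).

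The minimality equivalence follows by the same token. If $\B$ satisfies the antichain condition (ii) but some $b \in \B$ were redundant in generating $J$, then $x^b$ would lie in the ideal generated by $\B \setminus \{b\}$, and the observation would produce $b' \in \B \setminus \{b\}$ with $b' \leq_S b$, violating (ii). Conversely, if $\B$ generates $J$ irredundantly but $a, b \in \B$ satisfy $a \leq_S b$ with $a \neq b$, then $x^b = x^{b-a} x^a$ shows $b$ is redundant.

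The principal obstacle is the weight-space argument underpinning the dictionary: one must carefully verify that membership of a monomial $x^m$ in a $T_N$-invariant ideal can be detected by a single monomial generator dividing $m$ in $\kk[S]$ (equivalently, in $S$ via $\leq_S$). Once this is in place, both halves of the lemma reduce to translating ``generator of an ideal'' into ``$\leq_S$-minimal element of an upward-closed subset,'' and there is nothing further to do.
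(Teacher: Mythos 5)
Your proof is correct and follows essentially the same route as the paper, which simply records that condition (i) of $c$-admissibility is equivalent to the generated ideal containing $J_c$ and condition (ii) to minimality; you have merely made explicit the standard fact that $x^m$ lies in the monomial ideal generated by $\{x^b \mid b \in \B\}$ iff $b \leq_S m$ for some $b \in \B$, a fact the paper also uses (and spells out) in its later lemma on decorations. No gap.
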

\begin{proof}
    Condition (1) is equivalent to the ideal generated containing $J_c$, and condition (2) is equivalent to minimality.
\end{proof}

\begin{proposition}\label{TFAEProp}
    Let $X$ be an affine supervariety. Then the following are equivalent:
    \begin{enumerate}
        \item $X$ is an affine toric supervariety whose torus is $T = T_{N,c}$.
        
        \item $X = Y_{\A,\B}$ for a finite set $\A = \A' \sqcup \A''$ in a lattice and $\B = \A' \sqcup \B''$ nonempty where $\B''$ is a finite set in $\NN \A''$ and $\A'' = \{m \in \A \mid \langle m, c \rangle = 0\}$.
        
        \item $X = \Spec (\kk[S] \oplus \xi J)$ for an affine semigroup $S$ and a nonzero $T_N$-invariant ideal $J \subseteq \kk[S]$ that contains $J_c$.
        
        \item $X = \Spec \kk[S, \xi \B]$ for an affine semigroup $S$ and an $c$-admissible set $\B \subset S$.
    \end{enumerate}
\end{proposition}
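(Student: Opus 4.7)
The plan is to establish the cycle $(b) \Rightarrow (a)$, $(a) \Rightarrow (c)$, $(c) \Rightarrow (b)$, with $(c) \Leftrightarrow (d)$ handled in parallel. The direction $(b) \Rightarrow (a)$ is precisely Proposition \ref{YA is a TSV Prop}. The equivalence $(c) \Leftrightarrow (d)$ comes directly from the lemma immediately preceding Proposition \ref{TFAEProp}: a nonempty finite $\B \subseteq S$ is $c$-admissible iff it is a set of monic monomial generators for a nonzero $T_N$-invariant ideal $J \subseteq \kk[S]$ with $J \supseteq J_c$, in which case the odd part of $\kk[S, \xi\B]$ is $\xi \cdot \kk[S] \B = \xi J$, giving $\kk[S, \xi\B] = \kk[S] \oplus \xi J$.

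The substantive step is $(a) \Rightarrow (c)$. Restriction to the open dense $T \subseteq X$, together with axiom (iv) of Definition \ref{SupervarietyDefinition} and the identification $\kk(X) = \kk(T)$, yields an inclusion $\kk[X] \hookrightarrow \kk[T] = \kk[M, \xi]$ whose image is a $T$-stable, hence $\t$-stable, subsuperalgebra. Decomposing under the $T_N$-weight grading gives $\kk[X] = \bigoplus_{m \in M} (\kk[X] \cap L(m))$, and Lemma \ref{SubrepsLem} says each nonzero weight space is either all of $L(m)$ or (only when $\langle m, c\rangle = 0$) the line $\kk x^m$. Set $S := \{m \in M : x^m \in \kk[X]\}$ and $I := \{m \in M : x^m\xi \in \kk[X]\}$. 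The subalgebra structure makes $S$ a submonoid of $M$ and $I$ an $S$-ideal, while the explicit actions $\theta \cdot (x^m\xi) = x^m$ and $\theta \cdot x^m = \langle m, c\rangle x^m\xi$ force $I \subseteq S$ and $S \setminus \ker c \subseteq I$. Consequently $J := \bigoplus_{m \in I}\kk x^m$ is a $T_N$-invariant ideal of $\kk[S]$ containing $J_c$, with $\kk[X] = \kk[S] \oplus \xi J$. Finite generation of $\kk[X]$ as a superalgebra promotes $S$ to a finitely generated semigroup and $J$ to a finitely generated ideal; the relation $\ZZ S = M$ follows from the classical theory applied to the underlying toric variety $X_0 = \Spec \kk[S]$ with torus $T_N$; and $J \neq 0$ because $\dim X = (n|1)$ forces $\kk[X]_1 \neq 0$.

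For $(c) \Rightarrow (b)$, pick a finite semigroup generating set $G$ of $S$ together with a finite monic monomial generating set $\B_0 \subseteq S$ of $J$ (which exists by Lemma \ref{UniqueGenSetLem(b)}(b)). Set $\A := G \cup \B_0$, $\A'' := \A \cap \ker c$, $\A' := \A \setminus \A''$, and $\B'' := \B_0 \cap \ker c$, so that $\B'' \subseteq \A'' \subseteq \NN \A''$ and $\NN \A = S$. Since $\B_0 \setminus \ker c \subseteq \A' \subseteq J_c \subseteq J$, the ideal generated by $\{x^m : m \in \A' \cup \B''\}$ both contains $\B_0$ and is contained in $J$, hence equals $J$; applying Lemma \ref{UniqueGenSetLem(b)}(c) then yields $X \cong Y_{\A,\,\A'\sqcup\B''}$. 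The main obstacle is exactly this last bookkeeping step: one must enlarge the semigroup generating set of $S$ by the $c$-vanishing monomial generators of $J$ so that $\B''$ genuinely lies in $\NN \A''$, as demanded by the statement of $(b)$. All remaining verifications are routine translations between the conventions in $(a)$--$(d)$.
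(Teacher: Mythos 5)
Your proof is correct and follows essentially the same route as the paper: (b) $\Rightarrow$ (a) via Proposition \ref{YA is a TSV Prop}, (c) $\Leftrightarrow$ (d) via the admissibility lemma, and the weight-space/representation-theoretic argument for (a) $\Rightarrow$ (c) using injectivity of $\kk[X] \to \kk[T]$ and Lemma \ref{SubrepsLem}. The only difference is that you spell out the (c) $\Rightarrow$ (b) bookkeeping (enlarging a generating set of $S$ by the monomial generators of $J$ so that $\B''$ lands in $\NN\A''$, then invoking Lemma \ref{UniqueGenSetLem(b)}(c)), which the paper subsumes under ``we have already seen''; this is a useful clarification but not a different method.
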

\begin{proof}
    We have already seen that (d) $\iff$ (c) $\iff$ (b) $\implies$ (a). Now suppose (a) holds, so $X$ is an affine toric supervariety containing the supertorus $T$. The inclusion $T \hookrightarrow X$ induces a map of coordinate rings $$\kk[X] \to \kk[T]$$
    which is injective by part (iv) of Definition \ref{SupervarietyDefinition} since $T$ is an open subscheme of the supervariety $X$. Thus we may view $\kk[X]$ as a subalgebra of $\kk[T]$ and in particular $\kk[X_0]$ as a subalgebra of $\kk[T_0]$. From our knowledge of the usual case, this implies $\kk[X_0] = \kk[X]_0 = \kk[S]$ for an affine semigroup $S$.

    Since $T$ acts on $X$, it also acts on $\kk[X]$, so that $\kk[X]$ is a subrepresentation of $\kk[T]$. For each monomial $x^m \xi \in \kk[X]$, it holds that $x^m \in \kk[X]$ as well, so $\kk[X]_1 = \xi J$ for some $T_N$-invariant ideal $J \subset \kk[S]$. If $L(m)$ is irreducible (i.e.\ if $\langle m, c \rangle \neq 0$), then $x^m \in \kk[X]$ implies $x^m \xi \in \kk[X]$, meaning $J$ must contain $J_c$. Therefore (c) follows from (a).
\end{proof}

\begin{remark}
    The $c$-admissible set $\B$ can be taken as the finite set $\B$ occurring in $Y_{\A,\B}$, but it is not in general minimal: Consider the supertorus $Q(1) \cong T_{\ZZ, 1} \cong \Spec \kk[x^{\pm 1},\xi]$ as a toric supervariety containing itself. Here, we may take $S = \ZZ$, so that $X \cong Y_{\A,\B}$ for $\A = \B = \{\pm 1\}$ because $\B$ must contain $\A = \A - \ker c$. However, such $\B$ cannot be minimal, since $1$ and $-1$ generate the same ideal of $\kk[x^{\pm 1}]$.
\end{remark}

\begin{example}
    For every affine toric variety $X_0 = \Spec \kk[S] \supset T_N$ and supertorus $T_{N,c} \supset T_N$, there exists a \textbf{trivial} toric supervariety structure given by $J = \kk[S]$.
\end{example}

\begin{example}
    Suppose $T=T_{N,c}$ is an indecomposable supertorus. Then for a given affine toric variety $X_0 = \Spec \kk[S]$ with torus $T_N$, $J_c$ is generated by all non-constant monomials in $\kk[S]$. If $S$ is not pointed (i.e.\ it contains two distinct elements which are inverses), then $J_c = (1)$ and the only toric supervariety whose underlying variety is $X_0$ and whose torus is $T$, is the trivial one.

    If $S$ is pointed, then $J_c$ is a maximal ideal, so there is a unique nontrivial toric supervariety structure, in which $J=J_c$.
\end{example}

\subsection{Some further computations}
We have seen from Proposition \ref{TFAEProp} that an affine toric supervariety $X = \Spec A \supset T_{N,c}$ with one odd dimension consists of the data of an ordinary affine toric variety $X_0 = \Spec A_0 \supset T_N$ together with the additional data of a nonzero $T_N$-invariant ideal $J \subset A_0$ that contains $J_c$. In this case, it holds that $A = A_0 \oplus A_1$ where $A_1 = \xi J$. In this section, we provide some further information about the \textbf{fermionic ideal} $J$.

\begin{corollary}\label{ClassificationCor}
Let $X_0 = \Spec A_0$ be an affine toric variety with torus $T_0=T_N$. Affine toric supervarieties with underlying variety $X_0$ correspond to nonzero $A_0 \times T_0$-bimodules $J$ such that $J_c \subseteq J \subseteq A_0$. Equivalently if $c \neq 0$, they correspond to $T_N$-invariant submodules (or quotients) of $A_0/J_c$.
\end{corollary}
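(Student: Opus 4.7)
The plan is to invoke Proposition \ref{TFAEProp} directly, since the substantive content of the corollary is already packaged in its statement. By the equivalence of (a) and (c) there, specifying an affine toric supervariety $X$ with underlying variety $X_0 = \Spec A_0$ and supertorus $T_{N,c}$ amounts to specifying a nonzero $T_N$-invariant ideal $J \subseteq A_0$ with $J_c \subseteq J$; the supervariety is then reconstructed as $\Spec(A_0 \oplus \xi J)$. The only thing to observe is a translation of language: since $A_0$ is commutative, a $T_N$-invariant ideal of $A_0$ is the same data as an $A_0$-submodule of $A_0$ that is simultaneously stable under $T_0 = T_N$, which is exactly an $A_0 \times T_0$-bimodule contained in $A_0$. (The two module structures are compatible automatically because the $T_0$-action on $A_0$ is by algebra automorphisms.) This gives the first correspondence.

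For the reformulation in terms of $A_0/J_c$, I would note that when $c \neq 0$, the ideal $J_c$ is itself a nonzero $T_N$-invariant ideal, as remarked before Lemma \ref{UniqueGenSetLem(b)}. The classical lattice isomorphism theorem gives a bijection between ideals $J$ of $A_0$ with $J_c \subseteq J$ and ideals of the quotient $A_0/J_c$; since the quotient map $A_0 \to A_0/J_c$ is $T_N$-equivariant, this bijection restricts to one between $T_N$-invariant such $J$ and $T_N$-invariant ideals of $A_0/J_c$, i.e.\ $T_N$-invariant $A_0$-submodules of $A_0/J_c$. The parenthetical ``or quotients'' then follows because each submodule $N \subseteq A_0/J_c$ corresponds bijectively to the quotient $(A_0/J_c)/N$, and this bijection is again $T_N$-equivariant.

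There is no real obstacle in the argument: everything reduces to Proposition \ref{TFAEProp} together with the observation that the correspondence theorem for ideals is equivariant under any group acting by algebra automorphisms. The only care needed is to ensure we do not lose the nonzero condition on $J$—this is automatic when $c \neq 0$ since $J_c \neq 0$, and in the corollary's phrasing it is included in the requirement ``nonzero $A_0 \times T_0$-bimodules.''
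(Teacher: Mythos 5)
Your proposal is correct and matches the paper's intended argument: the corollary is stated as an immediate consequence of Proposition \ref{TFAEProp} (the equivalence of (a) and (c)), with the same translation into $T_N$-invariant ideals containing $J_c$ and, for $c \neq 0$, the equivariant ideal correspondence with $A_0/J_c$. Your handling of the nonzero condition and the ``or quotients'' phrasing is consistent with what the paper leaves implicit.
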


\begin{example}
    Let $T = T_{\ZZ^2, (1,0)} \cong Q(1) \times \kk^\times$ and $X_0 = \Spec A_0$ for $A_0 = \kk[x_1^{\pm 1} x_2, x_2]$. Here, we have $J_{(1,0)} = (x_1^{\pm 1} x_2)$, so $A_0/J_{(1,0)} = \kk[x_2]/(x_2^2)$. This module has three torus-invariant ideals, namely $(0), (x_2)$, and itself. These ideals yield toric supervarieties whose coordinate rings are, respectively, $\kk[x_1^{\pm 1} x_2, x_2, x_1^{\pm 1} x_2 \xi]$, $\kk[x_1^{\pm 1} x_2, x_2, x_1^{\pm 1} x_2 \xi, x_2 \xi]$, and $\kk[x_1^{\pm 1} x_2, x_2, \xi]$.

    Let us consider the toric supervariety $X=\Spec A$ for $A= \kk[x_1^{\pm 1} x_2, x_2, x_1^{\pm 1} x_2 \xi]$ in more detail. To witness it as a subvariety of some $\kk^{r|s}$, we take the three generators of $A_0$ and the two generators of $A_1$, arranging them into finite sets as follows:
    \begin{align*}
        \A' = \B' &= \{(1,1), (-1,1)\} \\
        \A'' &= \{(0,1)\} \\
        \B'' &= \varnothing
    \end{align*}
    Then we obtain
    \begin{align*}
        \Phi_{\A,\B} : T_{\ZZ^2, (1,0)} &\to \kk^{3|2} \\
        \Phi_{\A,\B}(t_1, t_2, \xi) &= (t_1t_2, t_1^{-1}t_2, t_2 \mid t_1t_2 \xi, t_1^{-1} t_2 \xi)
    \end{align*}
    and indeed $X \cong Y_{\A,\B}$.
\end{example}

To illustrate the naturality of the data of the fermionic ideal, we may verify that the Duflo-Serganova functor $$DS_Q : \kk[X] \mapsto \frac{\ker Q|_{\kk[X]^{Q^2}}}{Q(\kk[X]^{Q^2})}$$ of \cite{DS} outputs the data of $J$ when $Q= \xi \left(c_1 x_1 \d{x_1} + ... + c_n x_n \d{x_n} \right) + \d\xi$ is the odd vector field arising from the action of $\t_1$ on $\kk[X]$.

\begin{proposition}\label{DSProp}
    With notation as above, $DS_Q \kk[X] \cong A_0/J$.
\end{proposition}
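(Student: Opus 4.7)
The plan is to unravel the definitions by directly computing the action of $Q$ on the subalgebra $A = A_0 \oplus \xi J \subseteq \kk[T_{N,c}]$ and matching the resulting subquotient with $A_0/J$ on the nose.

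First I would record the action of $Q$ on weight vectors. Using the formula from Section \ref{SupertoriSection} (or by applying $Q$ directly as a derivation of $\kk[x_1^{\pm1},\dots,x_n^{\pm1},\xi]$), one finds $Q(x^m) = \langle m, c\rangle x^m \xi$ and $Q(x^m\xi) = x^m$. Consequently $Q^2$ preserves the weight-$m$ component and acts there as multiplication by the scalar $\langle m, c\rangle$. Thus the $Q^2$-kernel of $\kk[T_{N,c}]$ is the span of the weight spaces with $\langle m, c\rangle = 0$.

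Next I would write the fermionic ideal in monomial form, $J = \kk\langle x^m : m \in I_J\rangle$, where $I_J \subseteq S$ is a $T_N$-invariant ``filter'' containing $S^c := \{m\in S : \langle m,c\rangle \neq 0\}$. Writing $S_c := S \setminus S^c$ and $E := I_J \cap S_c$, one has $I_J = S^c \sqcup E$, so intersecting the $Q^2$-kernel of $\kk[T_{N,c}]$ with $A$ gives
\begin{align*}
    A^{Q^2} = \kk\langle x^m : m \in S_c\rangle \;\oplus\; \xi\,\kk\langle x^m : m \in E\rangle.
\end{align*}
Now restrict $Q$ to this subspace: on the even part $Q$ vanishes identically (since $\langle m, c\rangle = 0$ for $m \in S_c$), and on the odd part $Q(\xi x^m) = x^m$ for $m \in E$, with image landing in the even part since $E \subseteq S_c$. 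Therefore $\ker Q|_{A^{Q^2}}$ is the full even part, while $Q(A^{Q^2}) = \kk\langle x^m : m \in E\rangle$.

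Taking the quotient gives $DS_Q A = \kk\langle x^m : m \in S_c \setminus E\rangle$, and the same set of monomials is a vector-space basis for $A_0/J = \kk\langle x^m : m \in S \setminus I_J\rangle$. To upgrade this to an isomorphism of algebras I would note that multiplication in $DS_Q A$ is inherited from that of the subalgebra $A^{Q^2} \subseteq A$, so $x^m \cdot x^{m'}$ becomes $x^{m+m'}$ modulo $Q(A^{Q^2})$; this is precisely multiplication in $A_0/J$, since both quotients kill exactly the monomials indexed by $E$ (the monomials indexed by $S^c$ never appeared in $A^{Q^2}$ to begin with). The main ``obstacle'' is really just bookkeeping: one has to be careful that $J \supseteq J_c$ is exactly what ensures the missing monomials from $A^{Q^2}$ already coincide with the monomials killed by $A_0/J$ outside $E$, so that no extra classes survive or get identified.
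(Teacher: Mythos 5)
Your proposal is correct and follows essentially the same route as the paper's proof: decompose $\kk[X]^{Q^2}$ into the even and odd pieces supported on $\ker c$, observe that $Q$ kills the even part and maps the odd part onto $H\cap J$, and use $J\supseteq J_c$ to identify the resulting quotient with $A_0/J$. Your monomial-level bookkeeping (the sets $S_c$, $E$) is just a more explicit version of the paper's argument with $H$, and your check of the multiplicative structure is a welcome extra detail.
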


\begin{proof}
    We may assume that $\kk[X] \subseteq \kk[x_1^{\pm 1}, ..., x_n^{\pm 1}, \xi]$. Since $Q^2 = c_1 x_1 \d{x_1} + ... + c_n x_n \d{x_n}$, it holds that $Q^2(f(x)+g(x) \xi) = Q^2(f(x)) + Q^2(g(x))\xi$. For a monomial $x^m$, we obtain $Q^2(x^m) = \langle m, c \rangle x^m$ and hence
    $$\kk[X]^{Q^2} = (H \cap \kk[X]_0) \oplus (\xi H \cap \kk[X]_1)$$
    where $$H = \bigoplus_{m \in \ker c} \kk x^m$$
    is the ``hyperplane" orthogonal to $c$.

    We observe that $Q(H) = 0$ and $Q(g(x) \xi) = g(x)$, so
    \begin{align*}
        Q(\kk[X]^{Q^2}) &= Q(H \cap \kk[X]_0) + Q(\xi H \cap \xi J) \\
        &= H \cap J
    \end{align*}
    and $\ker Q|_{\kk[X]^{Q^2}} = H \cap \kk[X]_0$. Hence
    \begin{align*}
        DS_Q \kk[X] &= \frac{H \cap \kk[X]_0}{H \cap J} \\
        &\cong \kk[X]_0 / J,
    \end{align*}
    where the final isomorphism is due to the fact that $J \supseteq J_c$ and $H + J_c = \kk[X]_0$.  
\end{proof}

\subsection{Morphisms}\label{MorphismsSection}
Let $X = \Spec (\kk[S] \oplus \xi J)$ and $X' = \Spec (\kk[S'] \oplus \xi' J')$ be affine toric supervarieties with supertori $T = T_{N,c}$ and $T'=T_{N',c'}$, respectively. Recall that a morphism of supergroups $T \to T'$ consists of a morphism $\bar\phi : N \to N'$ of lattices and a number $a \in \kk$ such that $\bar \phi_\kk(c) = a^2 c'$.
\begin{definition}\label{MorphismDef}
    A morphism $\phi : X \to X'$ of supervarieties is called \textbf{toric} if $\phi(T) \subseteq T'$ and $\phi|_T : T \to T'$ is a morphism of supergroups.
\end{definition}

Before we characterize toric morphisms, we will establish some notation. The corresponding maps on coordinate superalgebras are
\begin{align*}
    \phi^* &: \kk[S'] \oplus \xi' J' \to \kk[S] \oplus \xi J \\
    \phi|_T^* &: \kk[M', \xi'] \to \kk[M, \xi]
\end{align*}
where $M = \ZZ S$ and $M' = \ZZ S'$. Since the conditions in the above definition imply that $\phi_0 : X_0 \to X_0'$ is a toric morphism, it follows that $\phi_0^* : \kk[S'] \to \kk[S]$ arises from a morphism $\hat \phi^* : S' \to S$ of semigroups, which extends to a morphism $\bar \phi^* : M' \to M$ of lattices. We write $\bar \phi : N \to N'$ for the dual map of $\bar \phi^*$.

\begin{proposition}\label{MorphismProp}
Let $T$ and $T'$ be the supertori of the affine toric supervarieties $X$ and $X'$ as above.
\begin{enumerate}
    \item A morphism $X \to X'$ of supervarieties is toric if and only if $\phi^*_0$ arises from a semigroup morphism $\hat \phi^* : S' \to S$, and $\bar \phi_\kk(c) = a^2 c'$ where $a \in \kk$ is the $\xi$-coefficient of $\phi^*(\xi')$.

    \item A morphism $\phi = (\bar\phi,a) : T \to T'$ of supergroups extends to a morphism $\Spec \kk[S] \to \Spec \kk[S']$ of toric supervarieties if and only if $\bar \phi^*(S') \subseteq S$, and either $a=0$ or $\phi^*(J') \subseteq J$.

    \item A toric morphism is equivariant, meaning for any superalgebra $A$, it holds that $\phi(t \cdot x) = \phi(t) \cdot \phi(x)$ for all $t \in T(A)$ and $x \in X(A)$.
\end{enumerate}
\end{proposition}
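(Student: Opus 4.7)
The plan is to treat the three parts in sequence, with Lemma~\ref{MorphismOfSupertoriLemma} (characterizing Hopf morphisms of supertori) and the classical dictionary from Section~\ref{ATVBackground} (identifying toric morphisms of affine toric varieties with semigroup morphisms) as the two main ingredients.

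For (a), both directions pivot on Lemma~\ref{MorphismOfSupertoriLemma}. In the forward direction, $\phi$ being toric forces $\phi|_T : T \to T'$ to be a supergroup morphism, so Lemma~\ref{MorphismOfSupertoriLemma} produces a lattice map $\bar\phi : N \to N'$ and a scalar $a \in \kk$ with $\phi|_T^*(\xi') = a\xi$ and $\bar\phi_\kk(c) = a^2 c'$; this $\bar\phi$ restricts to the semigroup morphism $\hat\phi^* : S' \to S$ underlying $\phi_0^*$ by classical toric theory. In the reverse direction, the semigroup hypothesis makes $\phi_0$ a classical toric morphism (so $\phi_0(T_0) \subseteq T_0'$) and allows extending $\phi^*$ by inverting characters to $\phi|_T^* : \kk[T'] \to \kk[T]$; the relation $\bar\phi_\kk(c) = a^2 c'$ with $a$ the $\xi$-coefficient then matches the hypotheses of Lemma~\ref{MorphismOfSupertoriLemma}, so $\phi|_T$ is a supergroup morphism and $\phi$ is toric.

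For (b), I would compute directly on coordinate rings. A supergroup morphism $\phi = (\bar\phi, a) : T \to T'$ dualizes to $\phi^* : \kk[T'] \to \kk[T]$ sending $x^{m'} \mapsto x^{\bar\phi^*(m')}$ and $\xi' \mapsto a\xi$. It restricts to $\kk[X'] \to \kk[X]$ iff both parts land in $\kk[X]$: the even part yields the classical criterion $\bar\phi^*(S') \subseteq S$, and on the odd part, $\phi^*(\xi' f) = a\xi \cdot \bar\phi^*(f)$ for $f \in J'$, which lies in $\xi J$ iff $a = 0$ (image vanishes) or $\bar\phi^*(J') \subseteq J$, giving the stated dichotomy.

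For (c), equivariance is the identity $(\phi|_T^* \otimes \phi^*) \circ \rho'^* = \rho^* \circ \phi^*$ of superalgebra maps $\kk[X'] \to \kk[T] \otimes \kk[X]$, which I would check on generators. Using the formulas for the coproduct of $T'$ restricted to $\kk[X']$ together with $\phi^*(\xi') = a\xi$ from (a), a direct calculation on monomials $x^{m'}$ and on odd generators $\xi' x^{n'}$ shows both sides agree; the $A$-pointwise equivariance then follows by functoriality. The main obstacle I anticipate is the bookkeeping in part (a)'s reverse direction: justifying that the identification of $a$ as the $\xi$-coefficient of $\phi^*(\xi')$ is compatible with the pure form $\phi|_T^*(\xi') = a\xi$ required by Lemma~\ref{MorphismOfSupertoriLemma}, so that the other hypotheses suffice to force the higher-order terms of $\phi^*(\xi')$ to vanish.
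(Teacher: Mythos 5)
Your route is the same as the paper's: its proof of Proposition \ref{MorphismProp} simply invokes the classical argument (\cite{CLS}, Proposition 1.3.14) together with Lemma \ref{MorphismOfSupertoriLemma}, and records exactly the observation you make in (b), namely that ``$a=0$ or $\phi^*(J')\subseteq J$'' is equivalent to $\phi^*(\xi'J')\subseteq \xi J$ because $\phi^*(\xi')=a\xi$. Your parts (b) and (c) are fine; in (c) you verify the comodule identity $(\phi|_T^*\otimes\phi^*)\circ\rho'^*=\rho^*\circ\phi^*$ on generators instead of the classical density argument on $T\times T\subseteq T\times X$, which is a harmless (and in the super setting arguably cleaner) variant of ``the same as in the usual case.''

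The obstacle you flag at the end of (a) is real, and you should know it cannot be discharged from the hypotheses as literally stated: ``$\phi_0^*$ comes from a semigroup morphism'' together with ``$\bar\phi_\kk(c)=a^2c'$, $a$ the $\xi$-coefficient of $\phi^*(\xi')$'' does not force the higher-order terms of $\phi^*(\xi')$ to vanish. For instance, take $X=X'=T=T'=T_{\ZZ,0}$, so $\kk[X]=\kk[x^{\pm1},\xi]$ with $J=(1)$, and define $\phi^*$ to be the identity on $\kk[x^{\pm1}]$ with $\phi^*(\xi')=(1+x)\xi$: then $\phi_0^*$ comes from the identity semigroup map, $a=1$, and $\bar\phi_\kk(c)=0=a^2c'$, yet $(\phi^*\otimes\phi^*)\Delta'(\xi')\neq\Delta\phi^*(\xi')$, so condition (ii) of Lemma \ref{MorphismOfSupertoriLemma} fails and $\phi$ is not toric in the sense of Definition \ref{MorphismDef}. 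The intended reading of the proposition --- implicit in the paper's own proof, which writes ``since $\phi^*(\xi')=a\xi$'' --- is that $\phi^*(\xi')$ is a scalar multiple of $\xi$, i.e.\ condition (ii) of Lemma \ref{MorphismOfSupertoriLemma} is part of the hypothesis rather than a consequence of the others. Once you adopt that reading, the step you were worried about disappears and your argument for (a) is complete and coincides in substance with the paper's.
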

\begin{proof}
    The proof is the same as in the usual case (\cite{CLS}, Proposition 1.3.14), except that we use Lemma \ref{MorphismOfSupertoriLemma} to determine when $\phi|_T$ is a morphism of supergroups. Notice that the condition $a=0$ or $\phi^*(J') \subseteq J$ is equivalent to the condition $\phi^*(\xi' J') \subseteq \xi J$, since $\phi^*(\xi') = a\xi$.
\end{proof}

\section{Quasinormal toric supervarieties}\label{QuasinormalSection}

We call a supervariety \textbf{quasinormal} if its underlying variety is normal, or equivalently if the local ring at every point is an integral superdomain $A$ such that $A/(A_1)$ is integrally closed.

\subsection{The affine case}

Suppose henceforth that $X_\sigma \supset T_N$ is the normal affine toric variety corresponding to the cone $\sigma \subset N_\RR$, so $X_\sigma = \Spec \kk[S_\sigma]$ where $S_\sigma$ is the semigroup of the dual cone $\check\sigma$. If $\tau$ is a face of $\sigma$, then $X_\sigma$ contains $X_\tau = \Spec \kk[S_\tau]$ as an open subvariety, and $\kk[S_\tau] = \kk[S_\sigma]_{x^{m_{\sigma, \tau}}}$ where $m_{\sigma, \tau} \in S_\sigma$ is such that $\tau = \{u \in \sigma \mid \langle m_{\sigma, \tau}, u \rangle = 0\}$.
\begin{proposition}\label{LocalizeProp}
    Using the above notation, suppose $X_{\sigma, J^\sigma} = \Spec (\kk[S_\sigma] \oplus \xi J^\sigma)$ is the affine toric supervariety with underlying variety $X_\sigma$ and fermionic ideal $J^\sigma$.
    \begin{enumerate}
        \item The localization $$(\kk[S_\sigma] \oplus \xi J^\sigma)_{x^{m_{\sigma, \tau}}} = \kk[S_\tau] \oplus \xi J^\tau$$ is the coordinate ring of the open subvariety $X_{\tau, J^\tau}$ obtained by restricting to the open affine $X_\tau$. In particular, it holds that $J^\sigma_{x^{m_{\sigma, \tau}}} = J^\tau$.
        \item Let $J^\sigma_c = (x^m \in \kk[S_\sigma] \mid \langle m, c \rangle \neq 0)$, and likewise for $\tau$. It holds that $(J^\sigma_c)_{x^{m_{\sigma, \tau}}} = J^\tau_c$.
        \item As defined in part (a), $X_{\tau, J^\tau}$ is an affine toric supervariety whose torus is $T_{N,c}$.
    \end{enumerate}
    
\end{proposition}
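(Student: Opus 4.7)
The plan is to treat part (a) as setting up the definition $J^\tau := (J^\sigma)_{x^{m_{\sigma,\tau}}}$, then prove part (b) by a direct monomial-level containment argument, and finally deduce part (c) from Proposition \ref{TFAEProp} by checking that $J^\tau$ satisfies the three hypotheses there (nonzero, $T_N$-invariant, and containing $J^\tau_c$). For part (a), since $x^{m_{\sigma,\tau}}$ is an even, non-zero-divisor in the domain $\kk[S_\sigma] \oplus \xi J^\sigma$ (nonzero because $\kk[S_\sigma]$ is a classical affine-semigroup domain, and all odd elements are annihilated by $\xi$ from outside), localization distributes across the even/odd decomposition:
\begin{equation*}
    (\kk[S_\sigma] \oplus \xi J^\sigma)_{x^{m_{\sigma,\tau}}} \;=\; (\kk[S_\sigma])_{x^{m_{\sigma,\tau}}} \oplus \xi (J^\sigma)_{x^{m_{\sigma,\tau}}}.
\end{equation*}
The even summand equals $\kk[S_\tau]$ by the classical theory recalled in Section \ref{NTVBackground}, so we read off $J^\tau = (J^\sigma)_{x^{m_{\sigma,\tau}}}$ as the fermionic ideal, and the resulting ring is the coordinate ring of the open subvariety of $X_{\sigma, J^\sigma}$ determined by $x^{m_{\sigma,\tau}} \neq 0$.

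For part (b), I would argue the two containments separately, working with the monomial generators. For $(J^\sigma_c)_{x^{m_{\sigma,\tau}}} \subseteq J^\tau_c$, any generator $x^m$ of $J^\sigma_c$ has $m \in S_\sigma \subseteq S_\tau$ with $\langle m,c\rangle\neq 0$ unchanged, so it already lies in $J^\tau_c$ as an ideal of $\kk[S_\tau]$. For the reverse, take a generator $x^n \in J^\tau_c$ and write $n = m - k m_{\sigma,\tau}$ with $m \in S_\sigma$ and $k \geq 0$. If $\langle m, c\rangle \neq 0$ then $x^m \in J^\sigma_c$ and $x^n = x^m / (x^{m_{\sigma,\tau}})^{k}$ visibly lies in the localization. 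Otherwise $\langle m, c\rangle = 0$, forcing $\langle n, c\rangle = -k\langle m_{\sigma,\tau}, c\rangle$, so the hypothesis $\langle n,c\rangle \neq 0$ gives $k \geq 1$ and $\langle m_{\sigma,\tau}, c\rangle \neq 0$. Then $m + m_{\sigma,\tau} \in S_\sigma$ has nonzero pairing with $c$, so $x^{m+m_{\sigma,\tau}} \in J^\sigma_c$, and $x^{m+m_{\sigma,\tau}}/(x^{m_{\sigma,\tau}})^{k+1} = x^n$ is in the localization as required.

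Part (c) then reduces to verifying the three conditions of Proposition \ref{TFAEProp}(c): $J^\tau$ is a nonzero $T_N$-invariant ideal of $\kk[S_\tau]$ containing $J^\tau_c$. Nonvanishing follows because $\kk[S_\sigma]$ is an integral domain, so localization at $x^{m_{\sigma,\tau}}$ is injective on ideals. The $T_N$-invariance of $J^\tau$ descends from that of $J^\sigma$, since $x^{m_{\sigma,\tau}}$ is a $T_N$-eigenvector and localization at an eigenvector preserves eigenspace decompositions. Finally, $J^\tau \supseteq (J^\sigma_c)_{x^{m_{\sigma,\tau}}} = J^\tau_c$ by (b). The only subtle step, and hence the main obstacle, is the second case of the reverse containment in (b)---the need to \emph{add} the distinguished monomial $x^{m_{\sigma,\tau}}$ to $m$ before clearing denominators is precisely what requires the side analysis on whether $\langle m_{\sigma,\tau}, c\rangle$ vanishes, and is where the hypothesis $J^\sigma \supseteq J^\sigma_c$ gets used in a non-superficial way.
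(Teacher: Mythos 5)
Your proof is correct and follows essentially the same route as the paper: in (a) the localization respects the even/odd splitting so that $J^\tau$ is read off as $(J^\sigma)_{x^{m_{\sigma,\tau}}}$, in (b) a monomial-by-monomial two-containment argument with the same case split on whether $\langle m_{\sigma,\tau}, c\rangle$ vanishes, and in (c) an appeal to the affine classification of Proposition \ref{TFAEProp}. One small inaccuracy in your closing remark: part (b) involves only $J^\sigma_c$ and $J^\tau_c$, so the hypothesis $J^\sigma \supseteq J^\sigma_c$ is not actually used there (it enters only in (c)); this does not affect the validity of your argument.
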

\begin{proof}
\begin{enumerate}
    \item The fact that $\kk[S_\sigma]_{x^{m_{\sigma, \tau}}} = \kk[S_\tau]$ is standard, so it remains to prove $J^\sigma_{x^{m_{\sigma, \tau}}} = J^\tau$. This follows immediately by computing sections over the basic open set defined by $x^{m_{\sigma, \tau}}$.

    \item Let $x^m$ be a generator of $J^\sigma_c$ and $k \in \NN$, so $\langle m, c \rangle \neq 0$ and $x^{m-km_{\sigma, \tau}} \in (J^\sigma_c)_{x^{m_{\sigma, \tau}}}$. Now consider the quantity $\langle m-km_{\sigma, \tau}, c \rangle.$ If $\langle m_{\sigma, \tau}, c \rangle = 0$, then indeed $x^{m-km_{\sigma, \tau}} \in J_c^\tau$. Otherwise $\langle m_{\sigma, \tau}, c \rangle \neq 0$, so $x^{\pm m_{\sigma, \tau}} \in J_c^\tau$ and hence $J_c^\tau=(1)$.

    \tab To see that $(J^\sigma_c)_{x^{m_{\sigma, \tau}}} \supseteq J^\tau_c$, let $x^m$ be a generator of $J_c^\tau$, so $x^m \in \kk[S_\tau]$ satisfies $\langle m, c \rangle \neq 0$. By part (a), $m=m'-km_{\sigma,\tau}$ for some $x^{m'} \in S_\sigma$ and $k \in \NN$. If $\langle m_{\sigma,\tau}, c \rangle = 0$, then $\langle m',c\rangle \neq 0$ and hence $x^{m'} \in J_c^\sigma$, so $x^m \in (J_c^\sigma)_{x^{m_{\sigma,\tau}}}$. Otherwise $\langle m_{\sigma, \tau}, c \rangle \neq 0$, so $x^{\pm m_{\sigma, \tau}} \in (J_c^\sigma)_{x^{m_{\sigma,\tau}}}$ and therefore $(J_c^\sigma)_{x^{m_{\sigma,\tau}}}=(1)$.

    \item This follows from part (b), as $(J^\sigma_c)_{x^{m_{\sigma, \tau}}} = J^\tau_c$. \qedhere
\end{enumerate}
\end{proof}

That is, in order to determine a quasinormal affine toric supervariety over the torus $T_{N,c}$, one requires the data of a strongly convex rational polyhedral cone $\sigma \subset N_\RR$ decorated by a nonzero $T_N$-invariant ideal $J^\sigma \supset J_c$ of $\kk[S_\sigma]$. When restricting to faces of $\sigma$, one finds that we may decorate each face with a certain localization $J^\tau$ of $J^\sigma$. These decorations will play an important role later, when we describe the non-affine case.

First, we translate Proposition \ref{MorphismProp} to the language of cones.

\begin{proposition}\label{ConeMorphismProp}
    Suppose we have strongly convex rational polyhedral cones $\sigma \subset N_\RR$ and $\sigma' \subset N'_\RR$, a morphism $\phi = (\bar\phi, a) : T_{N,c} \to T_{N',c'}$ of supergroups, and nonzero $T_N$-, $T_{N'}$-invariant ideals $J \supseteq J_c^\sigma$ and $J' \supseteq J_{c'}^{\sigma'}$ of $\kk[S_\sigma]$ and $\kk[S_{\sigma'}]$, respectively. Then $\phi$ extends to a map of affine toric supervarieties $\phi : X_{\sigma, J} \to X_{\sigma', J'}$ if and only if $\bar \phi_\RR(\sigma) \subseteq \sigma'$, and either $a=0$ or $\phi^*(J') \subseteq J$.
\end{proposition}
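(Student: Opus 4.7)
The plan is to reduce immediately to Proposition \ref{MorphismProp}(b), which already carries all the supergeometric content. Applied to $S = S_\sigma$ and $S' = S_{\sigma'}$, that proposition asserts that $\phi$ extends to the desired map of toric supervarieties if and only if (i) $\bar\phi^*(S_{\sigma'}) \subseteq S_\sigma$, and (ii) either $a=0$ or $\phi^*(J') \subseteq J$. Condition (ii) already matches the second clause of the statement verbatim (with $J$ and $J'$ playing the required roles, guaranteed nonzero and containing $J_c^\sigma$, $J_{c'}^{\sigma'}$ by hypothesis via Proposition \ref{TFAEProp}). Consequently the entire task reduces to the classical equivalence
\[
\bar\phi^*(S_{\sigma'}) \subseteq S_\sigma \iff \bar\phi_\RR(\sigma)\subseteq\sigma'.
\]

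To establish this, I would lean on the adjunction $\langle\bar\phi^*(m'),u\rangle = \langle m',\bar\phi_\RR(u)\rangle$ together with duality of rational polyhedral cones. The implication $(\Leftarrow)$ is the easy half: if $\bar\phi_\RR(\sigma)\subseteq\sigma'$, then for every $m' \in S_{\sigma'} = \check{\sigma'}\cap M'$ and $u \in \sigma$ the pairing is nonnegative, so $\bar\phi^*(m') \in \check\sigma\cap M = S_\sigma$. For the reverse direction $(\Rightarrow)$, I would combine the double-dual theorem $\sigma' = \check{\check{\sigma'}}$ with the fact that the rational polyhedral cone $\check{\sigma'}$ is the nonnegative $\RR$-span of its lattice points $S_{\sigma'}$. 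Writing an arbitrary $m' \in \check{\sigma'}$ as $\sum \lambda_i m'_i$ with $\lambda_i \geq 0$ and $m'_i \in S_{\sigma'}$, the inequality $\langle \bar\phi^*(m'_i), u\rangle \geq 0$ (available from the semigroup containment) upgrades by linearity to all of $\check{\sigma'}$, which by double duality forces $\bar\phi_\RR(u) \in \sigma'$ for every $u \in \sigma$.

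I do not expect a serious obstacle here. The classical half of the argument is standard toric-variety content (cf.\ \cite{CLS}, Proposition 1.3.15), and the entire super content of the statement has already been absorbed into Proposition \ref{MorphismProp}(b). The role of this proposition is simply to repackage the previously-established semigroup criterion into the combinatorially more convenient cone language, which is exactly what will be needed to glue affine pieces into non-affine quasinormal toric supervarieties in the following subsection.
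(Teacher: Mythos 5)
Your proposal is correct and matches the paper's intent exactly: the paper states Proposition \ref{ConeMorphismProp} as a direct translation of Proposition \ref{MorphismProp}(b) into cone language, with the only new content being the classical equivalence $\bar\phi^*(S_{\sigma'}) \subseteq S_\sigma \iff \bar\phi_\RR(\sigma)\subseteq\sigma'$, which you verify correctly via the adjunction and double duality (cf.\ \cite{CLS}, Proposition 1.3.15).
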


The following lemma will be useful in section \ref{FiberProductSection}, but is also interesting in its own right.

\begin{lemma}\label{JcPreservedLemma}
    Let $(\bar \phi, a) : X_{\sigma, J} \to X_{\sigma', J'}$ be a map of toric supervarieties, and let $$\phi^* : \kk[S_{\sigma'}] \oplus \xi' J' \to \kk[S_\sigma] \oplus \xi J$$ be the dual map. Then $\phi^*(\xi' J_{c'}) \subseteq \xi J_c$.
\end{lemma}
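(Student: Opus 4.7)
The plan is to reduce the claim to a check on a natural generating set of $\xi' J_{c'}$ and then to exploit the compatibility condition $\bar\phi_\kk(c) = a^2 c'$ that is forced on any morphism of supertori by Lemma \ref{MorphismOfSupertoriLemma}. Since $J_{c'}$ is by definition the $T_{N'}$-invariant ideal generated by the monomials $x^{m'}$ with $m' \in S_{\sigma'}$ and $\langle m', c' \rangle \neq 0$, and $\phi^*$ is $\kk$-linear and multiplicative, it suffices to verify that $\phi^*(\xi' x^{m'}) \in \xi J_c$ for each such generator.

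Next I would compute this image directly. Using that $\phi^*$ restricts to the toric map $\phi_0^*$ induced by the semigroup homomorphism $\bar\phi^* : S_{\sigma'} \to S_\sigma$ (so $\phi^*(x^{m'}) = x^{\bar\phi^*(m')}$) and that $\phi^*(\xi') = a\xi$ from Lemma \ref{MorphismOfSupertoriLemma}(ii), one has
\begin{equation*}
\phi^*(\xi' x^{m'}) = \phi^*(\xi')\,\phi^*(x^{m'}) = a\,\xi\,x^{\bar\phi^*(m')}.
\end{equation*}
If $a = 0$ this is zero and lies in $\xi J_c$ trivially, so we may assume $a \neq 0$.

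In the case $a \neq 0$, I would invoke Lemma \ref{MorphismOfSupertoriLemma}(iii), which gives $\bar\phi_\kk(c) = a^2 c'$, and apply the duality of the pairing to obtain
\begin{equation*}
\langle \bar\phi^*(m'),\, c \rangle = \langle m',\, \bar\phi_\kk(c) \rangle = a^2 \langle m', c' \rangle.
\end{equation*}
Since $a \neq 0$ and $\langle m', c' \rangle \neq 0$ by choice of $m'$, we conclude $\langle \bar\phi^*(m'), c \rangle \neq 0$, so $x^{\bar\phi^*(m')}$ is itself a generator of $J_c$, and hence $a\,\xi\,x^{\bar\phi^*(m')} \in \xi J_c$. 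This covers all generators and finishes the argument.

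There is no real obstacle here: the statement is essentially a bookkeeping consequence of the ``$a^2$-twist'' relation between $c$ and $c'$ that any supergroup map enforces. The only mild subtlety is the case $a=0$, which must be singled out since then $\phi^*(\xi') = 0$ and the containment is vacuous rather than coming from the lattice pairing.
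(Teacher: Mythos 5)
Your proof is correct and follows essentially the same route as the paper: both reduce to monomial generators $x^{m'}$ with $\langle m', c' \rangle \neq 0$, use $\phi^*(\xi') = a\xi$ and the relation $\langle \bar\phi^*(m'), c \rangle = \langle m', \bar\phi_\kk(c) \rangle = a^2 \langle m', c' \rangle$, and dispose of the $a = 0$ case by noting $\phi^*(\xi') = 0$. Your version just spells out the generator-by-generator bookkeeping a bit more explicitly than the paper does.
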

\begin{proof}
    Let $m' \in S_{\sigma'} - \ker c'$. Then
    \begin{align*}
        \langle \bar\phi^*(m'), c \rangle &= \langle m', \bar \phi_\kk(c) \rangle \\
        &= a^2 \langle m', c' \rangle,
    \end{align*}
    which is nonzero if $a \neq 0$. In this case, we have $\phi^*(J_c') \subseteq J_c$. If $a = 0$, then $\phi^*(\xi') = 0$ and the lemma still holds.
\end{proof}

\subsection{The general case}
Let $X$ be a quasinormal toric supervariety whose torus is $T=T_{N,c}$. Then $X_0 = X_\Sigma$ for a fan $\Sigma$ in $N_\RR$. By the results of the previous section, each cone $\sigma \in \Sigma$ is decorated by a suitable ideal $J^\sigma \supset J^\sigma_c$ of $\kk[S_\sigma]$ such that if $\tau$ is a face of $\sigma$, then $J^\tau$ is a particular localization of $J^\sigma$. We show that these data give a complete classification of quasinormal toric supervarieties for the given torus $T$.

\begin{proposition}
Let $\Sigma$ be a fan in $N_\RR$, $c \in N_\kk$, and $\{J^\sigma \subseteq \kk[S_\sigma] \mid \sigma \in \Sigma\}$ a collection of nonzero ideals such that each $J^\sigma \supseteq J^\sigma_c$. If $(J^\sigma)_{x^{m_{\sigma, \tau}}} = J^\tau$ whenever $\tau$ is a face of $\sigma$ in $\Sigma$, then the affine toric supervarieties $U_\sigma = \Spec (\kk[S_\sigma] \oplus \xi J^\sigma)$ can be glued together along their intersections to produce a toric supervariety whose underlying variety is $X_\Sigma$ and whose torus is $T_{N,c}$.
\end{proposition}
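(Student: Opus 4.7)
The plan is to mimic the classical gluing of affine toric varieties into $X_\Sigma$, carrying along the extra data of the fermionic ideals and invoking the gluing lemma for superschemes. For each pair $\sigma, \sigma' \in \Sigma$, set $\tau = \sigma \cap \sigma'$, which is a face of both. Applying Proposition \ref{LocalizeProp}(a) to the two inclusions $\tau \subseteq \sigma$ and $\tau \subseteq \sigma'$, the compatibility hypothesis produces canonical isomorphisms
\[
(\kk[S_\sigma] \oplus \xi J^\sigma)_{x^{m_{\sigma,\tau}}} \cong \kk[S_\tau] \oplus \xi J^\tau \cong (\kk[S_{\sigma'}] \oplus \xi J^{\sigma'})_{x^{m_{\sigma',\tau}}},
\]
which dualize to open immersions $U_\tau \hookrightarrow U_\sigma$ and $U_\tau \hookrightarrow U_{\sigma'}$. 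These constitute the transition data on every pairwise overlap.

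I would then verify the cocycle condition on triple intersections by observing that every superalgebra $\kk[S_\sigma] \oplus \xi J^\sigma$ embeds into the common ambient $\kk[M,\xi] = \kk[T_{N,c}]$, and that each of the gluing isomorphisms is induced by this embedding. Consequently the cocycle for the superalgebra data reduces to the classical cocycle for the fan gluing of $X_\Sigma$, which is already known. This yields a superscheme $X$ whose bosonic reduction is $X_\Sigma$ and which is covered by the affine opens $U_\sigma$.

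It remains to check the supervariety axioms of Definition \ref{SupervarietyDefinition} and to construct the global torus action. Irreducibility of $|X|=|X_\Sigma|$ and finite type are classical, while conditions (iv) and (v) on the generic stalk hold automatically by Proposition \ref{IntegralIfOneOddDim} because $X$ has dimension $(n|1)$. For the torus, $\{0\} \in \Sigma$ is a face of every cone, and since the ideal $J^{\{0\}}$ is $T_N$-invariant and nonzero inside $\kk[M]$ (whose monomials are all units), it must equal $\kk[M]$; hence $U_{\{0\}} = T_{N,c}$. The $T_{N,c}$-action on each $U_\sigma$ from Proposition \ref{TFAEProp}(c) is inherited from the ambient action on $\kk[M,\xi]$, so the actions agree on overlaps and glue to a global $T_{N,c}$-action on $X$ extending the self-action on $U_{\{0\}}$.

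The main obstacle is separatedness. In the classical setting this reduces, via the diagonal $X_\tau \to X_\sigma \times X_{\sigma'}$, to the algebra identity that $\kk[S_\tau]$ is generated by $\kk[S_\sigma]$ and $\kk[S_{\sigma'}]$ inside $\kk[M]$. In the super setting I would need the analogous superalgebra identity: that $\kk[S_\tau] \oplus \xi J^\tau$ is generated by the images of $\kk[S_\sigma] \oplus \xi J^\sigma$ and $\kk[S_{\sigma'}] \oplus \xi J^{\sigma'}$ under the two open immersions. The even part is the classical fact; the odd part requires combining both instances $(J^\sigma)_{x^{m_{\sigma,\tau}}} = J^\tau = (J^{\sigma'})_{x^{m_{\sigma',\tau}}}$ of the hypothesis to express an arbitrary element of $\xi J^\tau$ as a sum of products of elements coming from the two affine pieces.
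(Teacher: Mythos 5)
Your proposal is correct and follows essentially the same route as the paper, whose entire proof is to cite Proposition \ref{LocalizeProp}(a) together with the classical gluing statement for fans; your write-up simply makes the cocycle condition, the separatedness reduction (whose odd part indeed follows since a power of $x^{-m_{\sigma,\tau}}$ lies in $\kk[S_{\sigma'}]$ and multiplies $\xi J^\sigma$ into the subalgebra generated by the two charts), and the torus-action check explicit. The only small blemish is the appeal to Proposition \ref{IntegralIfOneOddDim} for conditions (iv) and (v) --- that proposition presupposes $X$ is already a supervariety, so using it there is circular --- but both conditions follow directly from the embeddings of all the charts into $\kk[M,\xi]$ that you already use for the cocycle condition.
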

\begin{proof}
This follows from Proposition \ref{LocalizeProp}(a) and the classical version of this statement.    
\end{proof}

Hence, quasinormal toric supervarieties can be described by a fan $\Sigma$ decorated by suitable ideals. These ideals arrange nicely into a coherent ideal sheaf $\J \subseteq \O_{X_\Sigma}$, called the \textbf{fermionic sheaf}, obtained by gluing together the coherent sheaves $\tilde J^\sigma$ on each open affine described by a cone $\sigma \in \Sigma$. The fermionic sheaf of a quasinormal toric supervariety admits the following expected properties:
\begin{proposition}
    Let $\J$ be the fermionic sheaf of a quasinormal toric supervariety with underlying variety $X_\Sigma$ and torus $T = T_{N,c}$.
    \begin{enumerate}
        \item $\J$ contains the minimal fermionic sheaf $\J_c$, obtained by gluing the sheaves $\tilde J_c^\sigma$ for all $\sigma \in \Sigma$.
        \item $\J$ is $T_N$-equivariant.
    \end{enumerate}
\end{proposition}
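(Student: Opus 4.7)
The plan is to verify both claims locally on the affine cover $\{U_\sigma\}_{\sigma \in \Sigma}$, where $U_\sigma = \Spec(\kk[S_\sigma] \oplus \xi J^\sigma)$, and to reduce each assertion to facts already established in the affine case.

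For part (a), I would first check that the sheaves $\tilde J_c^\sigma$ do glue to a coherent subsheaf $\J_c$ of $\O_{X_\Sigma}$. The only nontrivial point is compatibility on overlaps: whenever $\tau$ is a face of $\sigma$, the restriction of $\tilde J_c^\sigma$ to the open affine $X_\tau \subseteq X_\sigma$ must coincide with $\tilde J_c^\tau$. At the level of sections over the basic open $X_\tau$, this is precisely the equality $(J_c^\sigma)_{x^{m_{\sigma,\tau}}} = J_c^\tau$, which is Proposition \ref{LocalizeProp}(b). With $\J_c$ in hand, the containment $\J_c \subseteq \J$ can be checked on each $U_\sigma$, where it reduces to the inclusion $J_c^\sigma \subseteq J^\sigma$ that is built into the quasinormal toric supervariety structure.

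For part (b), $T_N$-equivariance of $\J$ is likewise a local condition on the cover. Each $J^\sigma \subseteq \kk[S_\sigma]$ is a $T_N$-invariant ideal by Proposition \ref{TFAEProp}(c) applied to the affine toric supervariety $U_\sigma$, so the $T_N$-action on $X_\sigma$ preserves the corresponding ideal sheaf $\tilde J^\sigma$. These local equivariant structures are automatically compatible on overlaps, since they all arise by restricting the single $T_N$-action on $X_\Sigma$, and localization at the $T_N$-semi-invariant $x^{m_{\sigma,\tau}}$ commutes with this action.

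Neither part presents a genuine obstacle: the proposition is essentially a repackaging of the affine statements. The only place any care is required is the overlap compatibility in part (a), and Proposition \ref{LocalizeProp}(b) is designed precisely to supply it.
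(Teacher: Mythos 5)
Your proposal is correct and follows essentially the same route as the paper: part (a) is handled by invoking Proposition \ref{LocalizeProp}(b) for well-definedness of $\J_c$ together with the built-in inclusions $J^\sigma_c \subseteq J^\sigma$, and part (b) by reducing to $T_N$-invariance of each $J^\sigma$ on the affine pieces and gluing, which is the standard argument the paper cites. Your version just spells out the overlap-compatibility details slightly more explicitly; no gap.
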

\begin{proof}
\begin{enumerate}
    \item Note that $\J_c$ is well-defined by Proposition \ref{LocalizeProp}(b). Since each $J^\sigma \supseteq J^\sigma_c$, the claim follows.
    
    \item We know the claim holds for $U = U_\sigma$. The rest is standard; see e.g.\ \cite{Perling}. \qedhere
\end{enumerate}
\end{proof}

We have thus proven:

\begin{proposition}\label{ClassificationByIdealSheaves}
Let $X$ be a quasinormal supervariety and $T = T_{N,c}$. Then $X$ is a toric supervariety with supertorus $T$ if and only if $X \cong X_{\Sigma,\J} := (X_\Sigma, \O_{X_\Sigma} \oplus \xi \J)$ for a fan $\Sigma$ in $N_\RR$ and a nonzero $T_N$-equivariant ideal sheaf $\J \supseteq \J_c$ in $\O_{X_\Sigma}$.
\end{proposition}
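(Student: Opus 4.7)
The plan is to prove both directions by reducing everything to the affine case already handled in the preceding proposition, using Proposition \ref{LocalizeProp}(a) as the bridge between local fermionic ideals and a global fermionic sheaf.

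For the reverse direction, I would start from the data of a fan $\Sigma$ and a nonzero $T_N$-equivariant ideal sheaf $\J \supseteq \J_c$ in $\O_{X_\Sigma}$, and set $J^\sigma := \J(U_\sigma)$ on each affine open $U_\sigma = X_\sigma$. By construction each $J^\sigma$ is a nonzero $T_N$-invariant ideal of $\kk[S_\sigma]$ containing $J_c^\sigma$, and the face-compatibility $(J^\sigma)_{x^{m_{\sigma,\tau}}} = J^\tau$ holds because $\J$ is a sheaf. The previous proposition then immediately assembles the affine toric supervarieties $U_\sigma = \Spec(\kk[S_\sigma] \oplus \xi J^\sigma)$ into a toric supervariety with underlying variety $X_\Sigma$ and supertorus $T_{N,c}$, and this gluing agrees tautologically with $(X_\Sigma, \O_{X_\Sigma} \oplus \xi \J)$.

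For the forward direction, I would take a quasinormal toric supervariety $X$ with supertorus $T = T_{N,c}$ and invoke the earlier lemma together with quasinormality to conclude that $X_0$ is a normal toric variety whose torus is $T_N$. By Sumihiro's theorem (cited in Section \ref{NTVBackground}), $X_0 \cong X_\Sigma$ for a unique fan $\Sigma$ in $N_\RR$. For each $\sigma \in \Sigma$, I would let $U_\sigma$ be the open subscheme of $X$ whose underlying space is $|X_\sigma|$; Proposition \ref{Affine} makes $U_\sigma$ affine, and the restriction of the $T$-action endows it with the structure of an affine toric supervariety with supertorus $T$. Proposition \ref{TFAEProp} then supplies, for every cone, a nonzero $T_N$-invariant ideal $J^\sigma \subseteq \kk[S_\sigma]$ containing $J_c^\sigma$ with $U_\sigma \cong \Spec(\kk[S_\sigma] \oplus \xi J^\sigma)$. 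The face-compatibility of these local ideals is exactly Proposition \ref{LocalizeProp}(a), so they glue to a coherent ideal sheaf $\J \subseteq \O_{X_\Sigma}$, and $\J \supseteq \J_c$ follows cone-by-cone, with $T_N$-equivariance inherited from each affine piece.

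The step I expect to require the most care, though it is not truly difficult here, is verifying that the local data $\{J^\sigma\}$ produced by Proposition \ref{TFAEProp} on each affine patch actually defines a sheaf rather than merely a presheaf on the cover by the $U_\sigma$. This reduces entirely to the localization identity $(J^\sigma)_{x^{m_{\sigma,\tau}}} = J^\tau$ of Proposition \ref{LocalizeProp}(a) applied on every pairwise intersection $U_\sigma \cap U_{\sigma'} = U_{\sigma \cap \sigma'}$; once this is in hand, the classical toric gluing of the underlying $X_\sigma$ promotes the family $\{\tilde J^\sigma\}$ to a coherent subsheaf of $\O_{X_\Sigma}$, completing the argument.
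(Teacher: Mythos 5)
Your proposal is correct and follows essentially the same route as the paper, which presents this proposition as the summary ("We have thus proven") of exactly the development you describe: restricting to the affine opens over each $X_\sigma$ and applying Proposition \ref{TFAEProp} together with Proposition \ref{LocalizeProp} in the forward direction, and invoking the preceding gluing proposition in the reverse direction. The only point worth recording, which you handle correctly, is that the face-compatibility $(J^\sigma)_{x^{m_{\sigma,\tau}}} = J^\tau$ is what lets the local ideals assemble into the quasi-coherent fermionic sheaf $\J$.
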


\subsection{A combinatorial approach}

The classification data $(\Sigma,\J)$ above is only semi-combinatorial in the sense that $\J$ is an algebraic object. In this section, we represent $\J$ with more combinatorial data; namely, we replace $\J$ with a collection $\B$ of local generators of $\J$.

\begin{definition}
As in Definition \ref{AdmissibleDef}, for $m, m' \in S_\sigma$ we write $m' \leq_\sigma m$ if there is $m'' \in S_\sigma$ such that $m=m'+m''$.
\end{definition}

\begin{definition}\label{DecoratedFanDef}
Let $T_{N,c}$ be a supertorus and $\Sigma$ a fan in $N_\RR$. For each $\sigma \in \Sigma$, let $\B_\sigma$ be a nonempty finite set in $S_\sigma$. We say that $\B = \{\B_\sigma\}_{\sigma \in \Sigma}$ is a set of \textbf{$c$-admissible decorations} for the fan $\Sigma$ if for all cones $\sigma,\sigma' \in \Sigma$:
\begin{enumerate}[label=(\roman*)]
    \item For all $m \in S_\sigma - \ker c$, there is $b \in \B_\sigma$ such that $b \leq_\sigma m$.
    \item For all $b \in \B_\sigma$, there is $b' \in \B_{\sigma'}$ such that $b' \leq_{\sigma \cap \sigma'} b$.
    \item If $a,b \in \B_\sigma$ satisfy $a \leq_\sigma b$, then $a=b$.
\end{enumerate}
\end{definition}

Notice that unlike in Definition \ref{AdmissibleDef}, here we require minimality (iii) as part of admissibility. This requirement is not strictly necessary for the classification, but it simplifies some matters in Section \ref{FiberSection}.

\begin{lemma}
    Condition (ii) of Definition \ref{DecoratedFanDef} holds for all $\sigma, \sigma' \in \Sigma$ if and only if it holds for all $\sigma, \sigma'$ such that one is a face of the other.
\end{lemma}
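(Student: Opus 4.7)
The forward direction is trivial, so the content is in showing that if condition (ii) holds whenever one of $\sigma,\sigma'$ is a face of the other, then it holds for all pairs $\sigma,\sigma' \in \Sigma$. The plan is to bridge an arbitrary pair through their intersection, which is a face of each by the fan axioms.

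Fix arbitrary $\sigma,\sigma' \in \Sigma$ and set $\tau = \sigma \cap \sigma'$. Because $\Sigma$ is a fan, $\tau$ is a face of both $\sigma$ and $\sigma'$ and lies in $\Sigma$, so $\B_\tau$ is defined and the hypothesis applies to each of the face pairs $(\sigma,\tau)$ and $(\tau,\sigma')$. Given $b \in \B_\sigma$, I would first apply (ii) to $(\sigma,\tau)$ to obtain $b'' \in \B_\tau$ with $b'' \leq_{\sigma \cap \tau} b$; since $\sigma \cap \tau = \tau$ this reads $b'' \leq_\tau b$. Then I would apply (ii) to $(\tau,\sigma')$ to obtain $b' \in \B_{\sigma'}$ with $b' \leq_{\tau \cap \sigma'} b''$, i.e.\ $b' \leq_\tau b''$.

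The final step is to combine these via transitivity of $\leq_\tau$: if $b'+c_1 = b''$ and $b''+c_2 = b$ with $c_1,c_2 \in S_\tau$, then $b' + (c_1+c_2) = b$, so $b' \leq_\tau b$. Since $\tau = \sigma \cap \sigma'$, this is exactly $b' \leq_{\sigma \cap \sigma'} b$, verifying (ii) for the pair $(\sigma,\sigma')$.

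There is no real obstacle here; the only thing one must be careful about is not to confuse $\sigma \cap \tau$ with $\tau$ (they coincide because $\tau \subseteq \sigma$) and to record that $\leq_\tau$ is transitive because $S_\tau$ is closed under addition. The lemma is really just the observation that the ``face'' relation on $\Sigma$, together with the intersection axiom of a fan, lets one factor any pair through a common face.
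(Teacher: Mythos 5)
Your proof is correct and follows essentially the same route as the paper: pass through the common face $\tau = \sigma \cap \sigma'$, apply the face-pair hypothesis twice, and conclude by transitivity of $\leq_\tau$ (which holds since $S_\tau$ is closed under addition). No issues.
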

\begin{proof}
    Suppose it holds in all cases of faces, and let $\tau = \sigma \cap \sigma'$. Now let $b \in \B_\sigma$. Then there is $a \in \B_\tau$ such that $a \leq_\tau b$, and likewise $b' \in \B_{\sigma'}$ such that $b' \leq_\tau a$ since $\tau$ is a face of both $\sigma$ and $\sigma'$. But then $b' \leq_\tau a \leq_\tau b$, so we are finished. 
\end{proof}

That is, we may replace condition (ii) with
\begin{enumerate}[label=(ii$'$)]
    \item Let $\tau$ be a face of $\sigma$. Then \begin{itemize}
        \item For all $b \in \B_\sigma$, there is $a \in \B_\tau$ such that $b \leq_\tau a$.
        \item For all $a \in \B_\tau$, there is $b \in \B_\sigma$ such that $a \leq_\tau b$.
    \end{itemize}
\end{enumerate}

The following lemma shows that $\B$ consists of local generating sets of the sheaf $\J$.

\begin{lemma}
    For each $\sigma \in \Sigma$, let $J^\sigma$ be a monomial ideal in $\kk[S_\sigma]$ and $\B_\sigma \subset S_\sigma$ a finite generating set of $J^\sigma$. Then
    \begin{enumerate}
        \item $J^\sigma \supseteq J^\sigma_c$ if and only if condition (i) of Definition \ref{DecoratedFanDef} holds.
        \item $J^\sigma_{x^{m_{\sigma, \tau}}} = J^{\tau}$ if and only if condition (ii) $\approx$ (ii$'$) of Definition \ref{DecoratedFanDef} holds.
        \item $\B_\sigma$ is minimal if and only if condition (iii) of Definition \ref{DecoratedFanDef} holds.
    \end{enumerate}
\end{lemma}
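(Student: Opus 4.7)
My plan is to unpack each of the three equivalences into a combinatorial statement in the affine semigroups $S_\sigma$ and $S_\tau$, using throughout the standard fact that for a monomial ideal $J \subseteq \kk[S]$ generated by monomials $\{x^b : b \in \B\}$, one has $x^m \in J$ if and only if there exists $b \in \B$ with $b \leq_S m$.

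For (a), the ideal $J^\sigma_c$ is itself monomial with the distinguished generating set $\{x^m : m \in S_\sigma - \ker c\}$. Hence $J^\sigma \supseteq J^\sigma_c$ holds precisely when each such $x^m$ lies in $J^\sigma$, which by the membership criterion applied to $\B_\sigma$ is exactly condition (i) of Definition \ref{DecoratedFanDef}.

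For (b), I first observe that since $\kk[S_\tau]$ is obtained from $\kk[S_\sigma]$ by inverting $x^{m_{\sigma,\tau}}$, the localization $J^\sigma_{x^{m_{\sigma,\tau}}}$ is precisely the monomial ideal of $\kk[S_\tau]$ generated by $\B_\sigma \subseteq S_\sigma \subseteq S_\tau$. The equality $J^\sigma_{x^{m_{\sigma,\tau}}} = J^\tau$ thus reduces to the statement that $\B_\sigma$ and $\B_\tau$ generate the same monomial ideal of $\kk[S_\tau]$. Applying the membership criterion in $\kk[S_\tau]$ to each of the two containments yields: for every $b \in \B_\sigma$ there exists $a \in \B_\tau$ with $a \leq_\tau b$, and for every $a \in \B_\tau$ there exists $b \in \B_\sigma$ with $b \leq_\tau a$. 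This is exactly condition (ii$'$) of Definition \ref{DecoratedFanDef}, which has already been shown equivalent to (ii).

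For (c), $\B_\sigma$ is minimal if and only if no element is redundant, i.e., no $b \in \B_\sigma$ lies in the ideal generated by $\B_\sigma \setminus \{b\}$. By the membership criterion in $\kk[S_\sigma]$, this amounts to saying that no $a \in \B_\sigma$ distinct from $b$ satisfies $a \leq_\sigma b$, which is condition (iii). I do not expect any genuine obstacle here — each direction is a direct unpacking of the divisibility criterion. The only mildly subtle point arises when $S_\sigma$ has nontrivial units: one may then have $a \leq_\sigma b$ and $b \leq_\sigma a$ with $a \neq b$, but in that case $(x^a) = (x^b)$ as principal ideals, so at least one of the two is redundant, and (iii) correctly flags the pair as non-minimal.
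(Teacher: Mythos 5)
Your proof is correct and follows essentially the same route as the paper: state the divisibility criterion for membership in a monomial ideal ($x^m \in J^\sigma$ iff $b \leq_\sigma m$ for some $b \in \B_\sigma$) and then unpack each of the three conditions directly, with (b) reduced to comparing the ideals of $\kk[S_\tau]$ generated by $\B_\sigma$ and $\B_\tau$. Your extra remark in (c) about non-trivial units is a harmless refinement of the same argument, so nothing further is needed.
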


\begin{proof} First observe that for $m \in S_\sigma$, we have $x^m \in J^\sigma$ if and only if there is a generator $b \in \B_\sigma$ and an element $f \in \kk[S]_\sigma$ such that $f x^b = x^m$. In particular, $f$ will always be a monomial, so this is equivalent to $b \leq_\sigma m$.
\begin{enumerate}
    \item The ideal $J_c^\sigma$ is generated by $S_\sigma - \ker c$, so condition (i) exactly says that all generators of $J^\sigma_c$ belong to $J^\sigma$.
    \item Notice that $J^\tau$ and $J^\sigma_{x^{m_{\sigma,\tau}}}$ are both ideals of $\kk[S_\tau] = \kk[S_\sigma]_{x^{m_{\sigma, \tau}}}$, generated by $\B_\tau$ and $\B_\sigma$, respectively. Let $a \in \B_\tau$. By (ii'), there is $b \in \B_\sigma$ such that $b \leq_\tau a$, i.e.\ that $x^a \in J^\sigma_{x^{m_{\sigma, \tau}}}$. Likewise, let $b \in \B_\sigma$, so there is $a \in \B_\tau$ such that $a \leq_\tau b$, i.e.\ that $b \in J^\tau$. Therefore $J^\sigma_{x^{m_{\sigma, \tau}}} = J^{\tau}$. The converse follows by tracing backwards through the same reasoning.
    \item If $a \leq_\sigma b$ and $a \neq b$, then $b$ remains in the ideal generated by $\B_\sigma - \{b\}$, so $\B_\sigma$ is not minimal. Conversely, if $B_\sigma$ is not minimal, then we can find $a,b \in \B_\sigma$ satisfying $a \leq_\sigma b$. \qedhere
\end{enumerate}
\end{proof}

We write $X_{\Sigma,\B}$ for the toric supervariety defined by gluing the affine toric supervarieties $X_{\sigma, \B_\sigma}$ for $\sigma \in \Sigma$ along their intersections as before. Notice by Lemma \ref{UniqueGenSetLem(b)}(b) that each $\B_\sigma$ is unique up to invertible elements of $S_\sigma$.

\begin{corollary}
    Let $X$ be a quasinormal supervariety and $T = T_{N,c}$. Then $X$ is a toric supervariety with supertorus $T$ if and only if $X \cong X_{\Sigma,\B}$ for a fan $\Sigma$ with $c$-admissible decorations $\B$.
\end{corollary}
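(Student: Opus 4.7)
The plan is to view this corollary as a combinatorial repackaging of Proposition \ref{ClassificationByIdealSheaves}, routing through the bridge lemma that was just proven (which translates the three conditions defining $c$-admissibility into the three structural properties of the fermionic ideal sheaf: containing $\J_c$, being compatible with localization along faces, and being presented by minimal monomial generating sets).

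For the forward direction, suppose $X$ is a quasinormal toric supervariety with supertorus $T=T_{N,c}$. By Proposition \ref{ClassificationByIdealSheaves}, there exists a fan $\Sigma$ in $N_\RR$ and a nonzero $T_N$-equivariant ideal sheaf $\J \supseteq \J_c$ such that $X \cong X_{\Sigma, \J}$. For each $\sigma \in \Sigma$, the sections $J^\sigma := \J(U_\sigma)$ form a nonzero $T_N$-invariant ideal of $\kk[S_\sigma]$ containing $J_c^\sigma$. By Lemma \ref{UniqueGenSetLem(b)}(b), $J^\sigma$ admits a finite minimal generating set $\B_\sigma$ consisting of monic monomials, unique up to multiplication by invertible monomials in $\kk[S_\sigma]$. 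The preceding lemma then tells us that: (i) $J^\sigma \supseteq J_c^\sigma$ is equivalent to condition (i) of Definition \ref{DecoratedFanDef}; (ii) the localization identity $J^\sigma_{x^{m_{\sigma, \tau}}} = J^\tau$, which holds because $\J$ is a well-defined sheaf, is equivalent to condition (ii); and (iii) minimality of $\B_\sigma$ is condition (iii). Hence $\B = \{\B_\sigma\}_{\sigma \in \Sigma}$ is a set of $c$-admissible decorations for $\Sigma$, and $X \cong X_{\Sigma, \B}$ by construction.

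For the converse, given a fan $\Sigma$ with $c$-admissible decorations $\B$, define $J^\sigma$ to be the ideal of $\kk[S_\sigma]$ generated by the monomials $\{x^b : b \in \B_\sigma\}$. The same bridge lemma, read in the reverse direction, guarantees that each $J^\sigma$ is nonzero, contains $J_c^\sigma$, and satisfies $J^\sigma_{x^{m_{\sigma, \tau}}} = J^\tau$ whenever $\tau$ is a face of $\sigma$. These compatibilities are exactly what is needed to glue the coherent sheaves $\widetilde{J^\sigma}$ on the affine open cover $\{U_\sigma\}$ into a single $T_N$-equivariant ideal sheaf $\J \supseteq \J_c$ on $X_\Sigma$. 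Proposition \ref{ClassificationByIdealSheaves} then identifies $X_{\Sigma, \B} = X_{\Sigma, \J}$ as a quasinormal toric supervariety with supertorus $T_{N,c}$, completing the equivalence.

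There is no serious obstacle: all the substantive work has already been carried out in Proposition \ref{ClassificationByIdealSheaves} and the immediately preceding lemma. The only point deserving any care is the well-definedness of the decoration $\B_\sigma$ associated to $X$, which is settled by the uniqueness clause in Lemma \ref{UniqueGenSetLem(b)}(b) (up to invertibles in $\kk[S_\sigma]$), and the fact that this ambiguity does not affect the resulting toric supervariety since $X_{\sigma, \B_\sigma}$ depends only on the ideal generated by $\B_\sigma$.
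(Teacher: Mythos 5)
Your proposal is correct and follows essentially the same route the paper intends: the corollary is an immediate repackaging of Proposition \ref{ClassificationByIdealSheaves} via the bridge lemma translating the sheaf conditions ($\J \supseteq \J_c$, localization along faces, minimal monomial generators) into conditions (i)--(iii) of Definition \ref{DecoratedFanDef}, with the uniqueness-up-to-invertibles of $\B_\sigma$ from Lemma \ref{UniqueGenSetLem(b)}(b) handling well-definedness exactly as the paper remarks. Nothing to add.
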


\begin{example}
Consider $(n|1)$-dimensional projective superspace as a supervariety; i.e.\ $\PP^{n|1} = (\PP^n, \Lambda^\bullet \O(-1)) = \Proj \kk[x_0, ..., x_n, \xi]$. Its underlying variety $\PP^n$ can be viewed as a toric variety $X_\Sigma$ where $\Sigma$ is the fan whose cones are the positive spans of the proper subsets of $\{e_1, ..., e_n, -(e_1+...+e_n)\}$.

Using the Proj construction, the affine charts of $\PP^{n|1}$ have coordinate rings $\kk[\frac{x_0}{x_i}, ..., \frac{x_n}{x_i}, \frac{\xi}{x_i}]$ for $i=0, ..., n$. Letting $z_i = \frac{x_i}{x_0}$ for $i=1, ..., n$ and $\eta = \frac{\xi}{x_0}$, we can write these rings as $\kk[z_1, ..., z_n, \eta]$ and $\kk[z_i^{-1}, z_i^{-1}z_1, ..., z_i^{-1} z_n, z_i^{-1} \eta]$ for $i=1, ..., n$. Notably, $\kk[z_1, ..., z_n]$ is the semigroup algebra of the dual cone to the $\RR_+$-span of $\{e_1, ..., e_n\}$, and $\kk[z_i^{-1}, z_i^{-1}z_1, ..., z_i^{-1} z_n]$ is that of $\{e_1, ..., \hat e_i, ..., e_n, -(e_1+...+e_n)\}$ where $\hat e_i$ denotes the exclusion of $e_i$. Denote these cones by $\sigma_0$ and $\sigma_i$ for $i=1, ..., n$ respectively.

The torus $T_{\ZZ^n}$ of this toric variety acts on the coordinate rings by $$(t_1, ..., t_n) \cdot x_0^{m_0} x_1^{m_1} \cdots x_n^{m_n} = x_0^{m_0} (t_1x_1)^{m_1} \cdots (t_nx_n)^{m_n}.$$ It follows that $\PP^{n|1}$ can be expressed as a decorated fan with $\sigma_0$ decorated by $\{1\}$ and $\sigma_i$ decorated by $\{x_i^{-1}\}$ (the characters by which $T_{\ZZ^n}$ acts on the respective odd generators in the coordinate rings). Then if $\PP^{n|1}$ is a toric supervariety, the torus must be $T_{\ZZ^n, c}$ where $\langle m, c\rangle = 0$ for all characters $x^m$ of the form $z_i^{-1} z_j$ for $i,j >0$. But $z_i^{-1} z_j = x_i^{-1} x_j$, so in particular we must have $c = 0$ or $c = (1, ..., 1)$, up to scale.
\end{example}

\subsection{Morphisms}

The category of quasinormal toric supervarieties with one odd dimension can then be described as having the above objects $X_{\Sigma, \B}$ with $T$ allowed to vary, and morphisms as in Definition \ref{MorphismDef}, which we characterize below. First, recall from Section \ref{NTVBackground} that a lattice map $\bar \phi : N \to N'$ is called \textbf{compatible with the fans} $\Sigma, \Sigma'$ in $N_\RR, N_\RR'$ if for each $\sigma \in \Sigma$, there is $\sigma' \in \Sigma'$ such that $\bar \phi_\RR(\sigma) \subseteq \sigma'$.

\begin{definition}
    Let $\phi_0 : X_\Sigma \to X_\Sigma'$ be a morphism of toric varieties induced by a lattice map $\bar \phi : N \to N'$ compatible with the fans $\Sigma$ in $N_\RR$ and $\Sigma'_\RR$ in $N'$. Let $\B$ and $\B'$ be $c$- and $c'$-admissible decorations for their respective fans. We say that $\bar \phi$ is \textbf{compatible with the decorations} if whenever $\bar \phi_\RR(\sigma) \subseteq \sigma'$ and $b' \in \B_{\sigma'}'$, there is $b \in \B_\sigma$ such that $b \leq_\sigma \bar \phi^*(b')$.
\end{definition}

\begin{proposition}\label{DecorationCompatibilityProp}
    Using the prior notation, let $\phi^\# : \O_{X_{\Sigma'}} \to \phi_*\O_{X_\Sigma}$ be the corresponding morphism of sheaves on $X_{\Sigma'}$, and let $Z \subset X_\Sigma$ and $Z' \subset X_{\Sigma'}$ be the closed subschemes corresponding to the ideal sheaves $\J$ and $\J'$ obtained from the decorations $\B$ and $\B'$, respectively. Then the following are equivalent:
    \begin{enumerate}
        \item $\bar \phi$ is compatible with the decorations.
        \item For each $\sigma \in \Sigma$ and $\sigma' \in \Sigma'$ such that $\bar\phi_\RR(\sigma) \subseteq \sigma'$, it holds that $\phi_0|_{U_\sigma}^*(J^{\sigma'}) \subseteq J^\sigma$.
        \item $\phi^\#|_{\J'}$ takes $\J'$ to $\phi_* \J$.
        \item $\phi|_Z$ is a morphism of schemes $Z \to Z'$.
    \end{enumerate}
\end{proposition}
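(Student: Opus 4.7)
The plan is to establish the chain of equivalences (a) $\iff$ (b) $\iff$ (c) $\iff$ (d), concentrating the real content in the first equivalence and reducing the remaining ones to local-to-global bookkeeping. All four conditions ultimately boil down to the same family of containments among monomial ideals on affine charts.

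For (a) $\iff$ (b), I would unwind the definitions. The ideal $J^\sigma$ is the monomial ideal in $\kk[S_\sigma]$ generated by $\{x^b : b \in \B_\sigma\}$, and likewise $J^{\sigma'}$ is generated by $\{x^{b'} : b' \in \B_{\sigma'}'\}$. The classical toric morphism formula gives $\phi_0|_{U_\sigma}^*(x^{b'}) = x^{\bar\phi^*(b')}$, so the containment $\phi_0|_{U_\sigma}^*(J^{\sigma'}) \subseteq J^\sigma$ is equivalent to requiring $x^{\bar\phi^*(b')} \in J^\sigma$ for every $b' \in \B_{\sigma'}'$. Since membership in a monomial ideal reduces to divisibility by a generator, this is equivalent to the existence of $b \in \B_\sigma$ with $b \leq_\sigma \bar\phi^*(b')$, which is exactly the decoration-compatibility condition.

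For (b) $\iff$ (c), I would use that the fermionic sheaves $\J$ and $\J'$ are obtained by gluing the affine pieces $\tilde J^\sigma$ and $\tilde J^{\sigma'}$, so a morphism of sheaves is determined by its behavior on the basis of toric open affines. Since the restriction of $\phi^\#$ to the chart $U_\sigma \to U_{\sigma'}$ is the toric morphism $\phi_0|_{U_\sigma}^*$, the global condition $\phi^\#(\J') \subseteq \phi_* \J$ is equivalent to the family of local conditions in (b). A mild subtlety: if $\bar\phi_\RR(\sigma)$ lies in a face $\tau'$ of $\sigma'$, then the containment on $U_{\tau'}$ is forced by Proposition \ref{LocalizeProp}(a), as $J^{\tau'}$ is a localization of $J^{\sigma'}$ and localization preserves inclusions of ideals. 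Finally, (c) $\iff$ (d) is the standard correspondence between closed subschemes and quasi-coherent ideal sheaves: $\phi$ restricts to a morphism of closed subschemes $Z \to Z'$ precisely when the defining ideal sheaf of $Z'$ pulls back into that of $Z$.

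The main obstacle is the bookkeeping in (b) $\iff$ (c), where one must carefully match each $\sigma \in \Sigma$ with the cones $\sigma' \in \Sigma'$ containing $\bar\phi_\RR(\sigma)$ and verify that the containments of ideals remain consistent upon passing to smaller faces via localization. Once this local compatibility is pinned down, every other step is either an application of Proposition \ref{LocalizeProp} or the standard ideal-sheaf/closed-subscheme dictionary.
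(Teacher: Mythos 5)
Your proposal is correct and follows essentially the same route as the paper: (a) $\iff$ (b) via generators of the monomial ideals and divisibility, (b) $\iff$ (c) by identifying $\phi_0|_{U_\sigma}^*$ with the sheaf map on the toric affine charts (your remark on faces being handled by Proposition \ref{LocalizeProp}(a) is a slightly more explicit version of the same bookkeeping), and (c) $\iff$ (d) by the standard ideal-sheaf/closed-subscheme correspondence.
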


\begin{proof}
    For (a) $\iff$ (b), observe that $\bar \phi^*(\B'_{\sigma'})$ generates $\phi_0|_{U_\sigma}^*(J^{\sigma'})$. Hence $\phi_0|_{U_\sigma}^*(J^{\sigma'}) \subseteq J^\sigma$ is equivalent to the condition that for all $b' \in \B'_{\sigma'}$, there is $b \in \B_\sigma$ such that $b \leq_\sigma \bar \phi^*(b')$.

    The map $\phi_0|_{U^\sigma}^* : \kk[S_{\sigma'}] \to \kk[S_\sigma]$ is the same as $$\Res^{\phi^{-1}(U_{\sigma'})}_{U_{\sigma}} \circ \phi^\#(U_{\sigma'}) : \O_{X_{\Sigma'}}(U_{\sigma'}) \to \O_{X_\Sigma}(U_\sigma),$$ so if we restrict further to $J^{\sigma'} \subseteq \kk[S_{\sigma'}]$ and $\J'(U_{\sigma'}) \subseteq \O_{X_{\Sigma'}}(U_{\sigma'})$, we see immediately that (b) $\iff$ (c). The fact that (c) $\iff$ (d) is standard.
\end{proof}

Hence if $X_{\sigma,J}$ and $X_{\sigma',J'}$ are affine, compatibility with decorations is exactly the condition that $\phi^*(J') \subseteq J$ as in Proposition \ref{ConeMorphismProp}. The following condition is equivalent to the condition that $\phi^*(\xi'J') \subseteq \xi J$.

\begin{definition}
Let $X_{\Sigma,\B}$ and $X_{\Sigma',\B'}$ be toric supervarieties with supertori $T_{N,c}$ and $T_{N',c'}$, respectively. If $(\bar\phi, a) : T_{N,c} \to T_{N'c,}$ is a morphism of supergroups, we say that $(\bar\phi, a)$ is \textbf{compatible with the decorated fans} if $\bar \phi : N \to N'$ is compatible with the fans, and either $a=0$, or $\bar\phi$ is compatible with the decorations.
\end{definition}

\begin{proposition}
    Let $N, N'$ be lattices and $\Sigma, \Sigma'$ fans in $N_\RR, N_\RR'$ with $c$-,$c'$-admissible decorations $\B$,$\B'$, respectively. Suppose $\phi = (\bar \phi, a) : T_{N,c} \to T_{N',c'}$ is a morphism of supergroups. Then $\phi$ extends to a morphism of toric supervarieties $X_{\Sigma, \B} \to X_{\Sigma', \B'}$ if and only if $(\bar \phi,a)$ is compatible with the decorated fans.
\end{proposition}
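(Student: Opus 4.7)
The approach is to reduce to the affine case (Proposition \ref{ConeMorphismProp}) and glue local extensions. For the forward direction, if $\phi$ extends to a morphism $X_{\Sigma,\B} \to X_{\Sigma',\B'}$, then the underlying map of toric varieties $X_\Sigma \to X_{\Sigma'}$ is a classical toric morphism, so $\bar\phi$ is compatible with the fans. For each $\sigma \in \Sigma$, choose $\sigma' \in \Sigma'$ with $\bar\phi_\RR(\sigma) \subseteq \sigma'$; the restriction yields a morphism of affine toric supervarieties $X_{\sigma,\B_\sigma} \to X_{\sigma',\B_{\sigma'}'}$, to which Proposition \ref{ConeMorphismProp} applies. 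Thus either $a = 0$, or $\phi^*(J^{\sigma'}) \subseteq J^\sigma$ for every such pair. The equivalence (a) $\iff$ (b) of Proposition \ref{DecorationCompatibilityProp} then promotes the latter condition to compatibility with the decorations.

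For the converse, assume $(\bar\phi, a)$ is compatible with the decorated fans. Because $\bar\phi$ is compatible with the fans, the classical theory extends $\bar\phi$ to a toric morphism $\phi_0 : X_\Sigma \to X_{\Sigma'}$. For each $\sigma \in \Sigma$ and any $\sigma' \in \Sigma'$ with $\bar\phi_\RR(\sigma) \subseteq \sigma'$, Proposition \ref{DecorationCompatibilityProp}(a) $\iff$ (b) together with the hypothesis yields $\phi^*(J^{\sigma'}) \subseteq J^\sigma$ whenever $a \neq 0$; when $a = 0$ this condition is not required. By Proposition \ref{ConeMorphismProp}, $\phi$ then extends on each affine chart to $\phi_\sigma : X_{\sigma, \B_\sigma} \to X_{\sigma', \B_{\sigma'}'} \hookrightarrow X_{\Sigma', \B'}$.

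It remains to glue the $\phi_\sigma$ into a global morphism of superschemes. For $\sigma_1, \sigma_2 \in \Sigma$ set $\tau = \sigma_1 \cap \sigma_2$, and pick $\sigma'_i \in \Sigma'$ containing $\bar\phi_\RR(\sigma_i)$; then $\bar\phi_\RR(\tau) \subseteq \sigma'_1 \cap \sigma'_2 =: \tau'$, a common face of both $\sigma'_i$. By Proposition \ref{LocalizeProp}(a), each $\phi_{\sigma_i}$ restricts on $X_{\tau, \B_\tau}$ to the morphism into $X_{\tau', \B_{\tau'}'}$ whose dual on coordinate rings is the canonical localization of $\phi^*$ determined by $\bar\phi^*$ and $a$. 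In particular, $\phi_{\sigma_1}$ and $\phi_{\sigma_2}$ agree on the overlap $X_{\tau, \B_\tau}$, so they glue to a morphism $X_{\Sigma, \B} \to X_{\Sigma', \B'}$; this restricts on the dense open supertorus to the supergroup map $\phi$, confirming it is a toric morphism.

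The main obstacle is the gluing step, namely verifying that the affine extension $\phi_\sigma$ does not depend on the auxiliary choice of target cone $\sigma' \supseteq \bar\phi_\RR(\sigma)$. This ultimately reduces to the coherence of the fermionic sheaf under localization, which is exactly Proposition \ref{LocalizeProp}(a): two candidate targets $\sigma'_1, \sigma'_2$ both factor $\bar\phi_\RR(\sigma)$ through the common face $\sigma'_1 \cap \sigma'_2$, and localization of the dual map $\phi^*$ is insensitive to the choice. With that coherence in hand, the remainder of the argument is a direct assembly of the classical compatibility-of-fans theorem with the affine super results established earlier.
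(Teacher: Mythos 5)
Your proposal is correct and follows essentially the same route as the paper: reduce to the affine charts via Proposition \ref{ConeMorphismProp}, translate the ideal-containment condition with Proposition \ref{DecorationCompatibilityProp}, and glue the partial extensions over intersections of cones. Your treatment of the gluing step (independence of the choice of target cone, agreement on overlaps via Proposition \ref{LocalizeProp}(a) and the fact that the dual maps are determined by $\bar\phi^*$ and $a$) is simply a more explicit version of what the paper leaves to the remark that intersections are again open affine toric subvarieties.
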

\begin{proof}
    First suppose that $\phi$ extends. By the analogous result in the purely even setting, $\bar \phi$ is compatible with the fans. Let $\sigma \in \Sigma$, so the extension $\phi$ restricts to a map $$\phi|_{U_{\sigma, J^\sigma}} : U_{\sigma, J^\sigma} \to U_{\sigma', J^{\sigma'}}$$ for any $\sigma' \in \Sigma'$ such that $\bar\phi_\RR(\sigma) \subseteq \sigma'$. By Proposition \ref{ConeMorphismProp}, we see that $a=0$ or $\phi_0|_{U_\sigma}^*(J^{\sigma'}) \subseteq J^\sigma$, which is sufficient by Proposition \ref{DecorationCompatibilityProp}.

    Conversely, suppose the compatibility conditions hold. By Proposition \ref{ConeMorphismProp}, these conditions guarantee that for each open affine toric subvariety $U_{\sigma, \B_\sigma} \subseteq X_{\Sigma, \B}$, there is an open affine toric subvariety $U_{\sigma', \B'_{\sigma'}} \subseteq X_{\Sigma', \B'}$ such that $\phi$ extends to a toric morphism $U_{\sigma, \B_\sigma} \to U_{\sigma', \B'_{\sigma'}}$. Moreover, these partial extensions are compatible with each other on intersections because intersections are also open affine toric subvarieties. Therefore these partial extensions may be glued together into a global extension, finishing our proof.
\end{proof}

\begin{definition}
    We define the category of \textbf{decorated fans} as follows: The objects are quadruples $(N,c,\Sigma,\B)$ where $N$ is a lattice, $c \in N_\kk$, $\Sigma$ is a fan in $N_\RR$, and $\B$ is a $c$-admissible family of decorations for $\Sigma$. The morphisms are tuples $(\bar\phi, a) : (N,c,\Sigma,\B) \to (N',c',\Sigma',\B')$ where $\bar\phi : N \to N'$ is a lattice map, $a \in \kk$ is such that $\bar \phi_\kk(c) = a^2 c'$, and $(\bar\phi, a)$ is compatible with the decorated fans. Composition of morphisms is given by $(\bar\phi', a') \circ (\bar \phi, a) = (\bar \phi' \circ \bar \phi, aa')$.
\end{definition}

\begin{theorem}
    The category of quasinormal toric supervarieties with one odd dimension is equivalent to the category of decorated fans.
\end{theorem}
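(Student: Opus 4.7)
The plan is to build an equivalence by constructing a functor $F$ from the category of decorated fans to the category of quasinormal toric supervarieties with one odd dimension, and verifying that it is essentially surjective, full, and faithful. On objects, $F$ sends a decorated fan $(N, c, \Sigma, \B)$ to the toric supervariety $X_{\Sigma, \B}$ obtained by gluing the affine charts $X_{\sigma, \B_\sigma}$ as in the preceding subsection, equipped with its action of $T_{N,c}$. On morphisms, $F$ sends $(\bar\phi, a) : (N, c, \Sigma, \B) \to (N', c', \Sigma', \B')$ to the unique extension of the associated supertorus morphism $T_{N,c} \to T_{N',c'}$, whose existence is guaranteed by the proposition immediately preceding the theorem.

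Essential surjectivity of $F$ is exactly the corollary stating that every quasinormal toric supervariety with supertorus $T_{N,c}$ is isomorphic to some $X_{\Sigma, \B}$. For fullness and faithfulness, fix decorated fans $\mathcal{D} = (N, c, \Sigma, \B)$ and $\mathcal{D}' = (N', c', \Sigma', \B')$ and study the map $\Hom_{\mathrm{DecFan}}(\mathcal{D}, \mathcal{D}') \to \Hom(X_{\Sigma, \B}, X_{\Sigma', \B'})$ induced by $F$. Injectivity: any toric morphism restricts to a morphism on the open dense supertorus (Definition \ref{MorphismDef}), and by part (iv) of Definition \ref{SupervarietyDefinition} any morphism of supervarieties is determined by its behavior on a dense open subscheme; Lemma \ref{MorphismOfSupertoriLemma} then recovers $(\bar\phi, a)$ uniquely from the supertorus morphism. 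Surjectivity: any toric morphism $\phi : X_{\Sigma, \B} \to X_{\Sigma', \B'}$ restricts to some supertorus morphism and hence to some pair $(\bar\phi, a)$; the proposition preceding the theorem forces this pair to be compatible with the decorated fans.

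Functoriality of $F$ is straightforward: the restriction to the supertorus of a composition is the composition of restrictions, and under Lemma \ref{MorphismOfSupertoriLemma} this composition of supertorus morphisms translates into precisely the composition rule $(\bar\phi' \circ \bar\phi, aa')$ defining morphisms in the category of decorated fans. Identity morphisms are preserved for the same reason.

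The main subtlety is that the decoration $\B_\sigma$ extracted from a quasinormal toric supervariety is only unique up to multiplication by invertible monomials in $\kk[S_\sigma]$, as noted after Lemma \ref{UniqueGenSetLem(b)}; thus $F$ is not literally invertible on the nose. However, an equivalence of categories requires only essential surjectivity together with fully faithfulness of $F$, both of which are established above, so this ambiguity is absorbed into the natural isomorphisms witnessing the equivalence and presents no genuine obstruction.
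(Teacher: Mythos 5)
Your proof is correct and follows essentially the same route the paper intends: the theorem is meant to follow directly from the classification corollary ($X \cong X_{\Sigma,\B}$, giving essential surjectivity) and the extension proposition for morphisms compatible with decorated fans (giving fullness, with faithfulness from density of the supertorus and Lemma \ref{MorphismOfSupertoriLemma}), which is exactly how you assemble it. Your explicit handling of uniqueness of extensions via condition (iv) of Definition \ref{SupervarietyDefinition} and of the ambiguity of $\B_\sigma$ up to invertible monomials simply makes precise what the paper leaves implicit.
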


\section{The geometry of quasinormal toric supervarieties}\label{GeometrySection}

Now that we have described the category of quasinormal toric supervarieties of one odd dimension, we seek to understand their geometry.

\subsection{Split toric supervarieties}\label{SplitSubsection}

Roughly speaking, a supervariety is split if it is smooth in the odd directions.

\begin{definition}
    A supervariety $X$ is \textbf{graded} if there is a coherent sheaf $\F$ of $\O_{X_0}$-modules such that $\O_X \cong \Lambda^\bullet \F$. It is called \textbf{split} if $\F$ can be chosen to be locally free (i.e.\ a vector bundle).
\end{definition}

As we have seen, every quasinormal $(n|1)$-dimensional toric supervariety is graded, with $\F=\J$ the fermionic sheaf and $\Lambda^2 \J = 0$.

\begin{proposition}\label{SplitProp}
    Let $X=X_{\Sigma,\B} = X_{\Sigma,\J}$ be a quasinormal toric variety with torus $T_{N,c}$. The following are equivalent:
    \begin{enumerate}
        \item $X$ is split.
        \item Each $\B_\sigma$ is a singleton.
        \item Each $J^\sigma$ is a principal ideal.
        \item $\J$ is a line bundle on $X_\Sigma$.
    \end{enumerate}
\end{proposition}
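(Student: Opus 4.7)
The plan is to prove the cycle (b) $\iff$ (c) $\iff$ (d) $\iff$ (a), using throughout the local description $\J|_{U_\sigma} = \tilde{J^\sigma}$ and the fact, guaranteed by Lemma \ref{UniqueGenSetLem(b)}, that $\B_\sigma$ is a minimal monomial generating set for $J^\sigma$.

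The equivalence (b) $\iff$ (c) is then immediate: $\B_\sigma$ is a singleton precisely when the minimal number of monomial generators of $J^\sigma$ is one, i.e., when $J^\sigma$ is principal. For (c) $\implies$ (d), if $J^\sigma = (x^{m_\sigma})$ then multiplication by the nonzerodivisor $x^{m_\sigma}$ gives a $\kk[S_\sigma]$-module isomorphism $\kk[S_\sigma] \xrightarrow{\sim} J^\sigma$, so $\tilde{J^\sigma}$ is free of rank one on each $U_\sigma$, and gluing trivializations yields a line bundle structure on $\J$. In the same direction, (d) $\implies$ (a) follows by taking $\F := \J$: a line bundle satisfies $\Lambda^{\geq 2}\F = 0$, and the natural map $\Lambda^\bullet \F \to \O_X$ sending $b_1 \wedge \cdots \wedge b_k \mapsto (\xi b_1)\cdots(\xi b_k)$ is an isomorphism in degrees zero and one while both sides vanish in higher degrees (the domain because $\F$ has rank one, the target because $\xi^2 = 0$), so $X$ is split.

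The two substantive implications are (a) $\implies$ (d) and (d) $\implies$ (c). For (a) $\implies$ (d), suppose $\O_X \cong \Lambda^\bullet \F'$ with $\F'$ locally free of generic rank $r$. Comparing odd parts, $(\Lambda^\bullet \F')_1 = \bigoplus_{k \text{ odd}}\Lambda^k \F'$ has generic rank $2^{r-1}$, which must match the generic rank of $(\O_X)_1 \cong \J$, namely one (as $\J$ is a nonzero ideal sheaf on the integral scheme $X_\Sigma$). Hence $r=1$ and $\F' \cong \J$ is a line bundle. The main obstacle is (d) $\implies$ (c), where we must upgrade local invertibility of $\J|_{U_\sigma}$ into actual principal generation of $J^\sigma$. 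The cleanest route is the classical triviality $\mathrm{Pic}(U_\sigma) = 0$ for a normal affine toric variety $U_\sigma$ (\cite{CLS}, Proposition 4.2.2), which forces $\tilde{J^\sigma}$ to be a trivial line bundle and hence $J^\sigma$ to be a principal ideal. Alternatively, $T_N$-equivariance of $\J$ allows one to choose an equivariant trivialization $\tilde{J^\sigma} \cong \O_{U_\sigma}$, and the generator then must be (a scalar multiple of) a single monomial, yielding principal generation directly.
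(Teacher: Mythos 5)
Your proof is correct and follows essentially the same route as the paper: (b) $\iff$ (c) and (c) $\implies$ (d) are handled identically, and your key step (d) $\implies$ (c) uses triviality of line bundles on an affine toric chart, merely citing the elementary fact $\mathrm{Pic}(U_\sigma)=0$ where the paper invokes Gubeladze's stronger theorem that all vector bundles on affine toric varieties are trivial. Your rank-comparison argument for (a) $\implies$ (d) is a welcome elaboration of what the paper dismisses as ``from the definition of split,'' since it rules out a splitting by some other locally free $\F'$ of higher rank and identifies $\F' \cong \J$.
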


\begin{proof}
    We have (a) $\iff$ (d) from the definition of split and (b) $\iff$ (c) from the definition of principal. Now if (d) holds, then $\J|_{U_\sigma}$ is a line bundle on the affine toric variety $U_\sigma$. A result of Gubeladze \cite{Gubeladze} shows that an arbitrary vector bundle on an affine toric variety is trivial, so $\J|_{U_\sigma} = \tilde J^\sigma$ where $J^\sigma \cong \kk[S_\sigma]$ as a $\kk[S^\sigma]$-module. That is, $J^\sigma$ is principal for each $\sigma \in \Sigma$.

    Conversely, if each ideal is principal, then each $\tilde J^\sigma$ is a trivial line bundle, and so $\J$ trivializes over the cover of $X_\Sigma$ by the $U_\sigma$.
\end{proof}

\begin{definition}
    We shall henceforth refer to a family $\B$ of decorations as \textbf{split} if each $\B_\sigma$ is a singleton.
\end{definition}

\subsection{Smooth toric supervarieties}\label{SmoothSubsection}

We seek to establish a characterization of smooth toric supervarieties. First, we recall the definition of a smooth supervariety and a smooth cone.

\begin{definition}
    Let $X = (|X|,\O_X)$ be a supervariety and $\p \in X(\kk) = |X|$ a closed point. Denote by $B$ the stalk $\O_{X,\p}$. We say that $X$ is \textbf{smooth at} $\p$ if $\overline{B} := B/(B_1)$ is a regular local ring, and $B \cong \overline{B}[\xi_1, ..., \xi_n]$ for some $n \in \NN$. We say that $X$ is \textbf{smooth} if it is smooth at all points $\p$.
\end{definition}

It is shown in \cite{Sherman}, Proposition 3.7.4 that if $X$ is smooth at $\p$, then $B = \O_{X,\p}$ is an integral superdomain. Consequently, only integral supervarieties can be smooth, and the condition that $\overline{B}$ is a regular local ring is equivalent to the condition that $X_0 = \overline{X}$ is smooth at $\p$.

\begin{definition}
    Let $N$ be a lattice, and $\sigma \subset N_\RR$ a strongly convex rational polyhedral cone. We say that $\sigma$ is \textbf{smooth} if its minimal generators form part of a basis for $N$. A fan $\Sigma$ is \textbf{smooth} if all its cones are smooth.
\end{definition}

Let us begin in the affine case.

\begin{proposition}\label{SmoothAffineProp}
    Let $X$ be an affine toric supervariety with torus $T_{N,c}$. Then $X$ is smooth if and only if $X\cong X_{\sigma,\B_\sigma}$ for a smooth cone $\sigma$ and a singleton $\B_\sigma$.
\end{proposition}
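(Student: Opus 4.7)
The easier direction is $(\Leftarrow)$. Suppose $\sigma$ is smooth and $\B_\sigma = \{b\}$ is a singleton, so $J^\sigma = (x^b)$ and the coordinate ring is
\[
A = \kk[S_\sigma] \oplus \xi\,(x^b) \;\cong\; \kk[S_\sigma][\eta]/(\eta^2),
\]
where $\eta = x^b \xi$, since $x^b$ is a nonzerodivisor in the domain $\kk[S_\sigma]$ and hence $(x^b) \cong \kk[S_\sigma]$ as a $\kk[S_\sigma]$-module. This isomorphism identifies $A$ with $\kk[S_\sigma] \otimes_\kk \Lambda[\eta]$. For any closed point $\p \in |X|$, the stalk is $A_\p \cong \kk[S_\sigma]_\p[\eta]$, which is exactly of the required form $\overline{B}[\eta]$ with $\overline{B} = \kk[S_\sigma]_\p$ regular local (since $X_\sigma$ is smooth). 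Thus $X$ is smooth at every closed point.

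For $(\Rightarrow)$, assume $X$ is smooth. Since smoothness of $X$ at $\p$ requires $\overline{B}_\p = B_\p/(B_{\p,1})$ to be regular local, $X_0$ is smooth; by the classical theory $X_0 \cong X_\sigma$ for a smooth cone $\sigma$. Next, at each closed $\p$ we have $B_\p \cong \overline{B}_\p[\xi_1, \ldots, \xi_{k(\p)}]$. Since the odd conormal rank is a locally constant function on the smooth irreducible supervariety $X$ and equals $1$ on the generic open subscheme used to define $\dim X = (n|1)$, we get $k(\p) = 1$ at every closed point, i.e.\ $B_\p \cong \overline{B}_\p[\xi]$. The odd part $B_{\p,1} = \xi J^\sigma_\p$ is isomorphic to $J^\sigma_\p$ as an $\overline{B}_\p$-module, and this module must be free of rank $1$. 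Since $J^\sigma$ is a nonzero ideal in the domain $\kk[S_\sigma]$, each $J^\sigma_\p$ is nonzero, so the conclusion is that $J^\sigma_\p$ is a principal ideal generated by a nonzerodivisor.

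It remains to globalize: $J^\sigma$ is locally principal, so $\widetilde{J^\sigma}$ is a line bundle on $X_\sigma$. Because $\sigma$ is smooth, $X_\sigma \cong \kk^k \times (\kk^\times)^{n-k}$ has trivial Picard group (equivalently, $\kk[S_\sigma]$ is a UFD), so $J^\sigma$ is principal, say $J^\sigma = (f)$. The ideal $J^\sigma$ is $T_N$-invariant, so the $T_N$-translates of $f$ generate the same principal ideal, forcing $t \cdot f = u(t) f$ for some $u(t) \in \kk[S_\sigma]^\times$; hence $f$ is a $T_N$-semi-invariant in the weight decomposition $\kk[S_\sigma] = \bigoplus_{m \in S_\sigma} \kk\, x^m$, and must be a scalar multiple of a single character $x^b$. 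Thus $J^\sigma = (x^b)$, and by Proposition \ref{SplitProp} the minimal generating set $\B_\sigma$ is the singleton $\{b\}$.

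The main obstacle is the globalization step: translating pointwise freeness of $J^\sigma_\p$ into a single monomial generator. This is where one must combine the classical triviality of $\mathrm{Pic}(X_\sigma)$ for smooth $\sigma$ (or equivalently factoriality of $\kk[S_\sigma]$) with the $T_N$-invariance of $J^\sigma$ to collapse a global section to a single character. The rest of the argument is routine once one keeps careful track of what $B_\p \cong \overline{B}_\p[\xi]$ says about the fermionic ideal.
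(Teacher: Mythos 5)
Your proof is correct, but your forward direction takes a genuinely different route from the paper's. The paper handles $(\Rightarrow)$ by localizing at a single well-chosen point: writing $X_0 \cong \kk^r \times (\kk^\times)^{n-r}$ and taking $\p = (0^r,1^{n-r})$, the distinguished point of $\sigma$, where no non-unit monomial gets inverted, so the minimal monomial generating set $\B_\sigma = \{m_1,\dots,m_s\}$ stays non-redundant in the stalk $B \cong \overline{B}[x^{m_1}\xi,\dots,x^{m_s}\xi]$; smoothness at that one point then forces $s=1$ directly, with no globalization needed. You instead prove local principality of $J^\sigma$ at every closed point (via constancy of the conormal rank), then globalize using triviality of $\operatorname{Pic}(X_\sigma)$ (factoriality of $\kk[S_\sigma]$ for smooth $\sigma$) together with $T_N$-invariance to collapse the generator to a single character, and finally invoke Proposition \ref{SplitProp}. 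Both arguments work; the paper's is shorter and avoids the Picard-group and equivariance machinery entirely, while yours is more systematic and doesn't depend on picking a special point. Two small points you gloss over: in the semi-invariance step, $u(t)$ is a priori a unit of $\kk[S_\sigma]$, hence a scalar times an invertible monomial $x^{m_0}$ with $m_0 \in \sigma^\perp \cap M$, and you need the finite-support argument (the support of $t\cdot f$ equals that of $f$, so the shift $m_0$ must vanish) before concluding $f$ is a scalar multiple of one character; and your constant-rank argument can be replaced by the cheaper observation that all products of odd elements vanish in $\O_X$ (since $(\xi J)^2=0$), so an ideal of a domain that is free must have rank exactly one, giving $k(\p)=1$ immediately.
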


\begin{proof}
    If $X$ is smooth, then so is $X_0$. But a smooth variety is normal, and so $X=X_{\sigma,\B_\sigma}$ for a smooth cone $\sigma$ and a $c$-admissible finite set $\B_\sigma = \{m_1, ..., m_s\} \subset S_\sigma$. Then the underlying variety is $X_0 \cong \Spec \kk[x_1, ..., x_r, x_{r+1}^{\pm 1}, ..., x_n^{\pm 1}] = \kk^r \times (\kk^\times)^{n-r}$ with torus $T_0 \cong (\kk^\times)^n$. Localizing at $\p = (0^r, 1^{n-r})$, we set $B=\kk[S]_\p$ and find $B \cong \overline{B}[x^{m_1} \xi, ..., x^{m_s} \xi]$ wherein $x^{m_i} \xi \notin \overline{B}[x^{m_j} \xi]$ for $i \neq j$. By definition of smoothness, it follows that $s=1$.

    On the other hand, if $\sigma$ is a smooth cone and $\B_\sigma$ is a singleton, then by the above reasoning we have $B \cong \overline{B}[x^m \xi] \cong \overline{B}[\xi]$ with $\overline{B}$ a regular local ring. Therefore $X_{\sigma,\B_\sigma}$ is smooth.
\end{proof}

The general case follows immediately from Propositions \ref{SplitProp} and \ref{SmoothAffineProp}.

\begin{theorem}
    Let $X$ be a toric supervariety with torus $T_{N,c}$. Then $X$ is smooth if and only if $X\cong X_{\Sigma,\B}$ for a smooth fan $\Sigma$ and split decorations $\B$.
\end{theorem}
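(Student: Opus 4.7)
The plan is to reduce the global statement to the affine case already handled in Proposition \ref{SmoothAffineProp}, by exploiting the fact that smoothness can be tested on any affine open cover and that the decorated-fan description isolates the data one cone at a time.

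First I would verify that the statement implicitly carries a quasinormality claim, so that invoking the combinatorial classification from the preceding section is legitimate. If $X$ is smooth, then at every closed point $p$ the ring $\overline{B} = \O_{X,p}/(\O_{X,p,1})$ is a regular local ring, hence an integrally closed domain; thus $X_0$ is normal and $X$ is quasinormal. By the corollary to Proposition \ref{ClassificationByIdealSheaves}, this lets me write $X \cong X_{\Sigma,\B}$ for some fan $\Sigma$ in $N_\RR$ and some $c$-admissible family of decorations $\B = \{\B_\sigma\}_{\sigma\in\Sigma}$. Conversely, every $X_{\Sigma,\B}$ produced from such data is quasinormal by construction, so it remains only to pin down when such an $X_{\Sigma,\B}$ is smooth.

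Next I would use the standard affine cover $X_{\Sigma,\B} = \bigcup_{\sigma\in\Sigma} U_{\sigma,\B_\sigma}$ by affine toric supervarieties. Since smoothness is tested at closed points and every closed point of $X$ lies in some $U_{\sigma,\B_\sigma}$, we have $X$ smooth if and only if each $U_{\sigma,\B_\sigma}$ is smooth. Proposition \ref{SmoothAffineProp} then says each $U_{\sigma,\B_\sigma}$ is smooth exactly when $\sigma$ is a smooth cone and $\B_\sigma$ is a singleton.

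Combining these two conditions across all $\sigma \in \Sigma$ gives exactly smoothness of the fan $\Sigma$ (by definition) together with splitness of the family $\B$ (by the definition given after Proposition \ref{SplitProp}, equivalent there to $\J$ being a line bundle). I do not anticipate a real obstacle: the substantive content has already been packaged into Proposition \ref{SmoothAffineProp} and Proposition \ref{SplitProp}, and the only genuine subtlety is the preliminary observation that smoothness forces quasinormality, without which the combinatorial classification $X \cong X_{\Sigma,\B}$ would not be available to start the argument.
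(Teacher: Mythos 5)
Your argument is correct and is essentially the paper's: the paper simply notes that the general case follows immediately from Propositions \ref{SplitProp} and \ref{SmoothAffineProp}, and your proposal just spells out the affine-local reduction over the cover $\{U_{\sigma,\B_\sigma}\}$ together with the (already implicit in Proposition \ref{SmoothAffineProp}) observation that smoothness forces normality of $X_0$, hence quasinormality, so the decorated-fan classification applies.
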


\begin{example}
    In this example, we show that every $(1|1)$-dimensional toric supervariety is smooth. Note that the three 1-dimensional ordinary toric varieties (up to isomorphism) are the torus $\kk^\times$ and the affine and projective lines $\AA^1$ and $\PP^1$, all of which are smooth. Also, the two $(1|1)$-dimensional supertori (up to isomorphism) are $T_{\ZZ,0}$ and $T_{\ZZ,1} \cong Q(1)$.

    Observe that the only nonzero monomial ideal in $\kk[\kk^\times] \cong \kk[x^{\pm 1}]$ is the whole ring $(1)$. However, in $\kk[\AA^1] \cong \kk[x]$, we have $(x^n)$ for $n$ any nonnegative integer. Since all these ideals are principal and smoothness is affine-local, it indeed follows that every $(1|1)$-dimensional toric supervariety is smooth.

    We now consider the possibilities over $\PP^1$. Let $m,n\geq0$. If we glue together the affine toric supervarieties $\Spec \kk[x, x^m \xi]$ and $\Spec \kk[x^{-1}, x^{-n} \xi]$ along their intersection $\Spec \kk[x^{\pm 1}, \xi]$, then we obtain a toric supervariety whose underlying variety is $\PP^1$ and whose fermionic sheaf is isomorphic to the line bundle $\O(-m-n)$. In particular, if the supertorus is $Q(1)$, then $m,n\leq1$ and so there are three possibilities for the fermionic sheaf; namely $\O(0), \O(-1),$ and $\O(-2)$.
\end{example}

\begin{example}
Consider the $\Pi$-symmetric super-Grassmannian $\Gr(1|1,2|2)^\Pi$ of $(1|1)$-dimensional subspaces of $\kk^{2|2}$ that are invariant under parity switch $\Pi$. For $A$ a superalgebra, its $A$-points consist of the full-rank matrices $$\begin{pmatrix}
    x_1 & \xi_1 \\
    x_2 & \xi_2 \\
    \xi_1 & x_1 \\
    \xi_2 & x_2
\end{pmatrix}$$ modulo the right action by $Q(1)$, where such a matrix is said to be full-rank if it has an invertible $(1|1) \times (1|1)$ submatrix (i.e.\ either $x_1$ or $x_2$ is invertible in $A_0$).

It is seen in e.g.\ \cite{Onishchik}, Section 6 that $\Gr(1|1,2|2)^\Pi$ is covered by two charts with coordinates $$\begin{pmatrix}
    y & \eta \\
    1 & 0 \\
    \eta & y \\
    0 & 1
\end{pmatrix} \text{ and } \begin{pmatrix}
    1 & 0 \\
    z & \zeta \\
    0 & 1 \\
    \zeta & z
\end{pmatrix}$$ respectively, and with transition functions given by $z = y^{-1}$ and $\eta = - \zeta/z^2$. Using the prior example, we find that $\Gr(1|1,2|2)^\Pi$ has underlying variety $\PP^1$ and fermionic sheaf $\O(-2)$.

This space admits a natural left action by the subgroup of the general linear supergroup $\GL(2|2,\kk)$ that preserves $\Pi$; this subgroup is $$Q(2) = \left\{ \begin{pmatrix}
    B & C \\
    C & B
\end{pmatrix} \right\}$$ where $B$ and $C$ are $2 \times 2$ matrices consisting of even and odd elements, respectively. We observe that $Q(2)$ contains a copy of $Q(1)$ embedded as $$\begin{pmatrix}
    a & 0 & \tau & 0 \\
    0 & 1 & 0 & 0 \\
    \tau & 0 & a & 0 \\
    0 & 0 & 0 & 1
\end{pmatrix}$$
so that the action on the first coordinate chart
$$\begin{pmatrix}
    a & 0 & \tau & 0 \\
    0 & 1 & 0 & 0 \\
    \tau & 0 & a & 0 \\
    0 & 0 & 0 & 1
\end{pmatrix}\begin{pmatrix}
    y & \eta \\
    1 & 0 \\
    \eta & y \\
    0 & 1
\end{pmatrix}$$ corresponds to multiplication $$\begin{pmatrix}
    a & \tau \\
    \tau & a
\end{pmatrix} \begin{pmatrix}
    y & \eta \\
    \eta & y
\end{pmatrix}$$ in $Q(1)$. Therefore $\Gr(1|1,2|2)^\Pi$ is naturally a smooth toric supervariety with supertorus $Q(1)$.
\end{example}

\subsection{Fibers of $\J$}\label{FiberSection}

One way to glean information from the fermionic sheaf $\J$ is by computing its fibers, which will turn out to extract the fan decorations $\B$.

In the classical setting of a normal toric variety $X_\Sigma$ with torus $T_N$, the torus orbits correspond bijectively to cones $\sigma \in \Sigma$ (\cite{CLS}, Theorem 3.2.6).
We denote by $\Orb(\sigma)$ the $T_N$-orbit of $X_{\Sigma}$ corresponding to the cone $\sigma \in \Sigma$.

\begin{lemma}
    Let $\p$ be a closed point in $\Orb(\sigma)$. Then $\B_\sigma$ is unique when viewed as a subset of the character lattice of $\Stab_{T_N}(\p)$.
\end{lemma}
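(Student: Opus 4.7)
The plan is to identify the indeterminacy of $\B_\sigma$ granted by Lemma \ref{UniqueGenSetLem(b)}(b) with translation by a specific sublattice of $M$, and then to recognize that sublattice as the kernel of the restriction of characters from $T_N$ down to $\Stab_{T_N}(\p)$. Once both are matched, the result is immediate.

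First, I would invoke Lemma \ref{UniqueGenSetLem(b)}(b), which asserts that the minimal monomial generating set of the ideal $J^\sigma \subseteq \kk[S_\sigma]$ is unique up to multiplication by invertible monomials. An element $x^m \in \kk[S_\sigma]$ is invertible if and only if both $m$ and $-m$ lie in $S_\sigma = \check\sigma \cap M$, which happens exactly when $m \in \sigma^\perp \cap M$. Hence, viewed as a subset of $M$, the ambiguity in $\B_\sigma$ is precisely translation by elements of the sublattice $\sigma^\perp \cap M$.

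Second, I would apply the orbit--cone correspondence: the orbit $\Orb(\sigma)$ is a torus with character lattice $\sigma^\perp \cap M$, and the surjection $T_N \twoheadrightarrow \Orb(\sigma)$ sending $t \mapsto t \cdot \p$ is dual to the inclusion $\sigma^\perp \cap M \hookrightarrow M$. Its kernel is $\Stab_{T_N}(\p)$, so the character lattice of $\Stab_{T_N}(\p)$ is the quotient $M/(\sigma^\perp \cap M)$; this quotient is torsion-free because $\sigma^\perp \cap M$ is saturated in $M$ (if $km \in \sigma^\perp$ for $k>0$, then $\langle m,u\rangle = 0$ for all $u \in \sigma$), so it is indeed a genuine lattice.

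Combining the two observations, the image of $\B_\sigma$ under $M \twoheadrightarrow M/(\sigma^\perp \cap M)$, i.e.\ viewed inside the character lattice of $\Stab_{T_N}(\p)$, is independent of the choice of minimal generating set. There is no serious obstacle here; the only thing to be careful about is identifying $\sigma^\perp \cap M$ correctly on both sides of the argument---as the group of units of $S_\sigma$ on the semigroup side, and as the kernel of character restriction to $\Stab_{T_N}(\p)$ on the torus side---after which the statement reduces to a direct consequence of Lemma \ref{UniqueGenSetLem(b)}(b).
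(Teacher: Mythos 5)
Your argument is essentially the paper's own: both reduce the ambiguity in $\B_\sigma$ (via Lemma \ref{UniqueGenSetLem(b)}(b)) to translation by the unit group $\sigma^\perp \cap M$ of $S_\sigma$, and then identify $\sigma^\perp \cap M$ --- the character lattice of the orbit $T_N \cdot \p \cong T_N/\Stab_{T_N}(\p)$ --- with exactly the characters of $T_N$ that become trivial upon restriction to $\Stab_{T_N}(\p)$. Your extra remarks on saturation and the identification of the stabilizer's character lattice with $M/(\sigma^\perp \cap M)$ are correct but just make explicit what the paper's orbit--stabilizer step already contains.
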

\begin{proof}
By Lemma \ref{UniqueGenSetLem(b)}(b), $\B_\sigma$ is unique up to multiplication by monomials which are invertible in $\kk[S_\sigma]$. The invertible monomials in $\kk[S_\sigma]$ are exactly $x^m$ for $m \in \sigma^\perp \cap M$, since $S_\sigma = \check \sigma \cap M$. Observe that $\sigma^\perp \cap M$ is the character lattice of the orbit $T_N \p$, viewed as an algebraic torus.

Let $b \in \B_\sigma$, and let $m \in \sigma^\perp \cap M$. Both $x^{b}$ and $x^{b+m}$ can be viewed as characters $T_N \to \kk^\times$. It suffices to show that they become the same character of $\Stab_{T_N}(\p)$ when we compose them with the map $\Stab_{T_N}(\p) \to T_N$. We have $T_N\p \cong T_N / \Stab_{T_N}(\p)$ by the orbit-stabilizer theorem, so the statement that $x^{m}$ belongs to the character lattice of $T_N\p$ is exactly the statement that $x^{m}$ is trivial on $\Stab_{T_N}(\p)$, as desired.
\end{proof}

\begin{proposition}
    Let $X_{\Sigma, \B}$ be a quasinormal toric supervariety with supertorus $T_{N,c}$, and let $\sigma \in \Sigma$. Let $\p$ be a closed point in $\Orb(\sigma)$. Then the fiber $\J(\p)$, as a representation of $\Stab_{T_N}(\p)$, is a direct sum of the characters in $\B_\sigma$.
\end{proposition}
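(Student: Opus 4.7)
The plan is to work on the affine chart $U_\sigma = \Spec \kk[S_\sigma]$, where $\J|_{U_\sigma}$ is the coherent sheaf associated to the $T_N$-equivariant monomial ideal $J^\sigma = (x^b : b \in \B_\sigma) \subseteq \kk[S_\sigma]$. Letting $\m_\p \subseteq \kk[S_\sigma]$ denote the maximal ideal of $\p$, I would compute the fiber as $\J(\p) = J^\sigma / \m_\p J^\sigma$ and exploit the $\Stab_{T_N}(\p)$-action inherited from the $T_N$-equivariance of $J^\sigma$ together with the fact that $\p$ is fixed by $\Stab_{T_N}(\p)$.

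Setting $L = \sigma^\perp \cap M$, the character lattice of the orbit $\Orb(\sigma) \cong T_N / \Stab_{T_N}(\p)$, I identify $M/L$ as the character lattice of $\Stab_{T_N}(\p)$, whence $\kk[S_\sigma]$ is naturally $(M/L)$-graded. Every monomial $x^m$ with $m \in S_\sigma \setminus L$ vanishes on $\Orb(\sigma)$, so the maximal ideal splits as $\m_\p = \m_0 \oplus \bigoplus_{\chi \neq 0} \kk[S_\sigma]_\chi$, with $\m_0 \subseteq \kk[L]$ the maximal ideal of $\p$ in the orbit. The key structural fact I would invoke is that $L$ is the minimal face of $\check\sigma$, so whenever $m', m'' \in S_\sigma$ satisfy $m' + m'' \in L$ we must have $m', m'' \in L$. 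Combined with the minimality condition of $c$-admissibility (Definition \ref{DecoratedFanDef}(iii)), this yields $(J^\sigma)_{\chi_b} = \kk[L] \cdot x^b$ as a free rank-$1$ $\kk[L]$-module for each $b \in \B_\sigma$, and it shows that the characters $\chi_b \in M/L$ for $b \in \B_\sigma$ are pairwise distinct.

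A weight-by-weight computation of $\m_\p J^\sigma$ then gives $(\m_\p J^\sigma)_{\chi_b} = \m_0 \cdot x^b$, so the $\chi_b$-isotypic component of the fiber is $\kk \overline{x^b}$. For any weight $\chi \notin \{\chi_b : b \in \B_\sigma\}$, the face property forces each $x^m \in (J^\sigma)_\chi$ to factor as $x^{m-b''} \cdot x^{b''}$ with $m - b'' \notin L$ (so $x^{m-b''} \in \m_\p$); this gives $(J^\sigma)_\chi \subseteq \m_\p J^\sigma$ and contributes nothing to the fiber. Assembling the weight pieces yields $\J(\p) = \bigoplus_{b \in \B_\sigma} \kk \overline{x^b}$ with the claimed $\Stab_{T_N}(\p)$-action. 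The main obstacle is executing the face-property arguments cleanly; without them one cannot rule out cross-weight terms in $\m_\p J^\sigma$ either collapsing a generator $\overline{x^b}$ to zero or producing extra classes at other weights.
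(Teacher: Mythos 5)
Your argument is correct and follows essentially the same route as the paper's (much terser) proof: both compute the fiber $J^\sigma/\m_\p J^\sigma$ on the affine chart using $T_N$-equivariance, the fact that the monomials not vanishing at $\p \in \Orb(\sigma)$ are exactly those with exponents in $\sigma^\perp \cap M$, and the minimality of $\B_\sigma$ to see that the classes of the $x^b$ survive and carry distinct characters of $\Stab_{T_N}(\p)$. Your weight-by-weight treatment via the $M/L$-grading, with the lineality-space (face) property ruling out cross-weight contributions to $\m_\p J^\sigma$, just makes explicit the details the paper leaves implicit.
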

\begin{proof}
    Localizing at $\p$ means we invert monomials not in $\p$. In the event that $\p$ belongs to the $T_N$-orbit corresponding to $\sigma$, we invert the monomials in $\sigma^\perp$. Hence, when we quotient $J^\sigma_\p$ by $\p J^\sigma_\p$, we are left with a direct sum of the desired characters.
\end{proof}

\begin{example}
    Consider the torus $T = T_{\ZZ^3, (1,0,0)} \cong Q(1) \times (\kk^\times)^2$ and the toric supervariety over $T$ with underlying variety $X_0 = \AA^3$ (from the cone $\sigma$ which is the positive orthant in $\RR^3$) and coordinate ring $$\kk[X] = \kk[x_1, x_2, x_3, x_1 \xi, x_2^{j_1} x_3^{k_1} \xi, ..., x_2^{j_\ell} x_3^{k_\ell} \xi]$$ for some fixed $\ell \in \NN$, $0 \leq j_1 \leq ... \leq j_\ell$, and $k_1 \geq ... \geq k_\ell \geq 0$. Observe that $\B_\sigma = \{x_1, x_2^{j_i} x_3^{k_i}\}_{i=1, ..., \ell}$. By Corollary \ref{ClassificationCor}, every toric supervariety with supertorus $T$ and underlying variety $X_0$ is of this form, since $J_{(1,0,0)} = (x_1)$. 
    
    Denote by $\tau_{ij}$ the two-dimensional cone generated by $e_i$ and $e_j$, and by $\rho_i$ the one-dimensional cone generated by $e_i$. One can verify the following computations, in which we conflate maximal ideals with classical points in $\AA^3$:
    \begin{itemize}
        \item For $\p \in \Orb(\sigma) = \{(0,0,0)\}$, the fiber is $$J(\p) \cong \kk x_1 \oplus \bigoplus_{i=1}^\ell \kk x_2^{j_i} x_3^{k_i}$$
        \item For $\p \in \Orb(\tau_{12}) = 0 \times 0 \times \kk^\times$, the fiber is $$J(\p) \cong \begin{cases}
            \kk x_1 \oplus \kk x_2^{j_1} &\text{ if } \ell>0 \text{ and } j_1>0 \\
            \kk 1 &\text{ if } \ell>0 \text{ and } j_1=0 \\
            \kk x_1 &\text{ if } \ell=0
        \end{cases}$$
        \item For $\p \in \Orb(\tau_{13}) = 0 \times \kk^\times \times 0$, the fiber is $$J(\p) \cong \begin{cases}
            \kk x_1 \oplus \kk x_3^{k_\ell} &\text{ if } \ell>0 \text{ and } k_\ell>0 \\
            \kk 1 &\text{ if } \ell>0 \text{ and } k_\ell=0 \\
            \kk x_1 &\text{ if } \ell=0
        \end{cases}$$
        \item For $\p \in \Orb(\rho_{1}) = 0 \times \kk^\times \times \kk^\times$, the fiber is $$J(\p) \cong \begin{cases}
            \kk 1 &\text{ if } \ell>0 \\
            \kk x_1 &\text{ if } \ell=0
        \end{cases}$$
        \item The fiber at any other closed point is $\kk 1$, since $x_1^{-1}$ is present in the local ring and $x_1 \xi \in \kk[X]$.
    \end{itemize}
\end{example}

\subsection{Orbits and stabilizers}
In the ordinary setting, both the orbit and the stabilizer of a closed point in a toric variety can be viewed as algebraic tori. In particular, if the torus is $T_N$ and $\p \in \Orb(\sigma)$, let $N_\sigma$ be the sublattice of $N$ spanned by the points in $\sigma \cap N$, and $N(\sigma) = N/N_\sigma$. Then $\Stab_{T_N}(\p) \cong T_{N_\sigma}$ and $T_N \cdot \p = \Orb(\sigma) \cong T_{N(\sigma)}$. Additionally, the Zariski closure of the orbit $\Orb(\sigma)$ is a toric variety with torus $T_{N(\sigma)}$. We will prove analogous results in the super case.

We recall from \cite{Fioresi} the notion of the stabilizer supergroup.
\begin{definition}
    Let $G$ be an affine algebraic supergroup acting on a supervariety $X$. Given a closed point $\p \in |X|$, the \textbf{stabilizer supergroup functor} is the group-valued functor $\Stab_G(\p)$ on the category of superalgebras, given by $$\Stab_G(\p)(A) = \{g \in G(A) \mid g \cdot \p_A = \p_A\}$$
    where $\p_A$ is $\p$, viewed as an $A$-point of $X$. By [2], $\Stab_G(\p)$ is naturally an affine algebraic supergroup.

    We may similarly define the \textbf{orbit functor} $G \cdot \p$ as $$(G \cdot \p)(A) = G(A) \cdot \p_A$$
    so that the orbit-stabilizer theorem holds, in the sense that $$(G \cdot \p)(A) \cong G(A) / \Stab_G(\p)(A)$$ at the level of sets.
\end{definition}

Let $\p \in |X|$ be a closed point in a toric supervariety $X$ with one odd dimension. To compute its orbit and stabilizer, we may assume without loss of generality that $X = Y_{\A,\B}$ is affine, so that $X = \Spec \kk[x_1, ..., x_r, \xi_1, ..., \xi_s] / J(Y_{\A,\B})$ by the proof of Lemma \ref{UniqueGenSetLem(b)}(c). Then $\p = (a_1, ..., a_r \mid 0, ..., 0)$ for some $a_i \in \kk$ on which 
 all the ordinary polynomials in $J(Y_{\A,\B})$ vanish. Suppose that $\kk[x_1, ..., x_r, \xi_1, ..., \xi_s] / J(Y_{\A,\B}) \cong \kk[x^{m_1}, ..., x^{m_r}, x^{n_1} \xi, ..., x^{n_s} \xi]$, and recall that $n_j = \sum_{i=1}^r p_i^j m_i$ for some $p_i^j \in \NN$, and $n_j = m_j$ for $j=1, ..., q$.

\begin{proposition}
    Using the above notation, it holds that
    $$\Stab_{T_{N,c}}(\p) \cong \begin{cases}
        T_{N_\sigma, c} &\text{ if } a^{p^j} = 0 \text{ for all } i=1, ..., s \\
        T_{N_\sigma} &\text{ otherwise}
    \end{cases}$$
    where $c$ lies in the subspace $(N_\sigma)_\kk \subset N_\kk$ and
    $$T_{N,c} \cdot \p \cong \begin{cases}
        T_{N(\sigma)} &\text{ if } a^{p^j} = 0 \text{ for all } i=1, ..., s \\
        T_{N(\sigma),\bar c} &\text{ otherwise}
    \end{cases}$$
    where $\bar c$ is the image of $c$ under $N \to N(\sigma)$.
\end{proposition}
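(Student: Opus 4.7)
My plan is to compute the action of $T_{N,c}(A)$ on $\p_A$ explicitly and read off the stabilizer and orbit functors. Since both are local around $\p$, by Proposition~\ref{Affine} I may reduce to the affine case $X = Y_{\A,\B} \subseteq \kk^{r|s}$, writing $\p_A = (a_1, \dots, a_r \mid 0, \dots, 0)$. Applying the group law of the ambient supertorus $T''$ derived in the proof of Proposition~\ref{YA is a TSV Prop}, the cross-terms involving the zero components of $\p_A$ drop out, so that
\[
(t \mid \xi) \cdot \p_A \;=\; \bigl(t^{m_1} a_1, \, \dots, \, t^{m_r} a_r \;\bigm|\; t^{n_1} a^{p^1} \xi, \, \dots, \, t^{n_s} a^{p^s} \xi\bigr),
\]
where $a^{p^j} = \prod_i a_i^{p_i^j} = x^{n_j}(\p)$.

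I would then read off the stabilizer conditions by setting this equal to $\p_A$. The even equations $t^{m_i} a_i = a_i$ cut out the classical stabilizer $\Stab_{T_N}(\p) = T_{N_\sigma}$, while the odd equations $a^{p^j} \xi = 0$ (after cancelling the units $t^{n_j}$) split into two cases: if every $a^{p^j}$ vanishes, then $\xi$ is unconstrained and the full odd generator lies in the stabilizer; otherwise $\xi = 0$ is forced and the stabilizer is purely even. In the first case I would still need to verify that $c$ actually lies in $(N_\sigma)_\kk$, so that $T_{N_\sigma, c}$ is a genuine sub-supertorus. This is where the admissibility hypothesis $J \supseteq J_c$ enters: if some $m \in \sigma^\perp \cap M$ had $\langle m, c \rangle \ne 0$, then $x^m \in J_c \subseteq J$ would force $m = n_j + m'$ for some $j$ and $m' \in S_\sigma = \check\sigma \cap M$, and pairing against any $u \in \sigma$ would use non-negativity of $n_j, m'$ on $\sigma$ together with $m \in \sigma^\perp$ to force $n_j \in \sigma^\perp$, contradicting $x^{n_j}(\p) = 0$ on $\Orb(\sigma)$.

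Finally, the orbit follows from the orbit-stabilizer theorem combined with the supertorus-datum equivalence of Section~\ref{SupertoriSection}: quotienting $T_{N,c}$ by the even subgroup $T_{N_\sigma}$ yields a supertorus with cocharacter lattice $N/N_\sigma = N(\sigma)$ whose odd generator still squares to the image $\bar c$ of $c = \theta^2$, hence the orbit is $T_{N(\sigma), \bar c}$; quotienting instead by $T_{N_\sigma, c}$ collapses the odd generator entirely and leaves the purely even torus $T_{N(\sigma)}$. I expect the main obstacle to be the technical verification that $c \in (N_\sigma)_\kk$ in the case where the odd direction survives in the stabilizer; the remaining steps are a direct unwinding of the group law and the Harish-Chandra pair formalism.
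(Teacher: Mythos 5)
Your proposal is correct and takes essentially the same approach as the paper: compute $(t\mid\xi)\cdot\p_A$ from the group law of Proposition \ref{YA is a TSV Prop}, read off that the stabilizer is $T_{N_\sigma}$ with $\xi$ free exactly when all $a^{p^j}=0$ and $\xi=0$ otherwise, check $c \in (N_\sigma)_\kk$, and obtain the orbit by quotienting via orbit-stabilizer. The only (harmless) divergence is in the verification that $c\in(N_\sigma)_\kk$: you argue by contradiction from $J\supseteq J_c$, forcing some $n_j\in\sigma^\perp$ against $a^{p^j}=0$, while the paper notes $a_1=\dots=a_q=0$ forces $m_1,\dots,m_q\notin\sigma^\perp$ so that $\sigma^\perp$ lies in the span of $\{m_{q+1},\dots,m_r\}\subseteq\ker c$; both rest on the same positivity argument on $\sigma$.
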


\begin{proof}
    For $(t \mid \xi) \in T_{N,c}(A)$ and $(a_1, ..., a_r \mid \eta_1, ..., \eta_s) \in X(A)$, we use the group law from Proposition \ref{YA is a TSV Prop} to calculate
    \begin{align*}
        (t \mid \xi) &\cdot (a_1, ..., a_r \mid \eta_1, ..., \eta_s) \\
        = (&t^{m_1} a_1 + t^{m_1 }\langle m_1, c \rangle \xi \eta_1, ..., t^{m_q} a_1 + t^{m_q}\langle m_q, c \rangle \xi \eta_q, \\
        &t^{m_{q+1}} a_{q+1}, ..., t^{m_r} a_r \mid \\
        &t^{n_1} a_1 \xi + t^{n_1} \eta_1, ..., t^{n_q}a_q \xi + t^{n_q} \eta_q, \\
        &t^{n_{q+1}} a^{p^{q+1}} \xi + t^{n_{q+1}} \eta_{q+1}, ..., t^{n_{q+1}} a^{p^{q+1}} \xi + t^{n_{q+1}} \eta_{q+1})
    \end{align*}
    and so in particular,
    \begin{align*}
        (t \mid \xi) \cdot \p_A &= (t \mid \xi) \cdot (a_1, ..., a_r \mid 0, ..., 0) \\
        &= (t^{m_1} a_1, ..., t^{m_r} a_r \mid t^{n_1} a^{p^1} \xi, ..., t^{n_{s}} a^{p^{s}} \xi) \\
        &= (t^{m_1} a_1, ..., t^{m_r} a_r \mid t^{m_1} a_1 \xi, ..., t^{m_q} a_q \xi, t^{n_{q+1}} a^{p^{q+1}}, ..., t^{n_{s}} a^{p^{s}} \xi).
    \end{align*}
    It follows that $(t|\xi)$ stabilizes $\p_A$ if and only if $t \in \Stab_{T_N}(\p) \cong T_{N_\sigma}$ and either $a^{p^j} = 0$ for all $j=1, ..., s$, or $\xi=0$.
    
    In the former case, this implies that $a_j = 0$ for $j=1, ..., q$. Under the correspondence between closed points of $\Spec \kk[S_\sigma]$ and semigroup homomorphisms $S_\sigma \to \kk$ (\cite{CLS}, Proposition 1.3.1), the homomorphism corresponding to $\p$ sends $m_1, ..., m_q$ to 0. Therefore these $m_j$ are not invertible in $S_\sigma$, so in particular $m_j \notin \sigma^\perp$. In order to see that $c \in (N_\sigma)_\kk$, we must show that $\langle \sigma^\perp, c \rangle = 0$. But $\sigma^\perp$ must contained in the span of $\{m_{q+1}, ..., m_r\}$ because $m_1, ..., m_q \notin \sigma^\perp$. Since $\{m_{q+1}, ..., m_r\} \subseteq \ker c$, it indeed follows that $c \in (N_\sigma)_\kk$.

    Now observe that in either case, $\Stab_{T_{N,c}}(\p)(A)$ is a normal subgroup of $X(A)$, so the orbit identity also follows by taking the respective quotients.
\end{proof}

\begin{example}
    Let $T = T_{\ZZ^2, (1,0)}$. We consider the family of toric supervarieties $X^n := \Spec \kk[x_1, x_2, x_1 \xi, x_2^n \xi]$ for $n \geq 0$ and $X^\infty = \Spec \kk[x_1, x_2, x_1 \xi]$. Let $\p \in |X^n| = \AA^2$ be the closed point $(a,b)$. We calculate the stabilizers for such points in each of these toric supervarieties:
    \begin{center}
\begin{tabular}{|c || c | c | c|} 
\hline
\multicolumn{4}{|c|}{$\Stab_{T_{\ZZ^2, (1,0)}}(a,b)$} \\
\hline
 & $X^0$& $X^n$ for $n>0$ & $X^\infty$ \\ 
 \hline
 $a = b = 0$ & $T_{\ZZ^2}$ & $T_{\ZZ^2, (1,0)}$ & $T_{\ZZ^2, (1,0)}$ \\ 
 $a=0, b \neq 0$ & $T_{\ZZ \times 0}$ & $T_{\ZZ \times 0}$ & $T_{\ZZ\times 0, 1}$ \\
 $a \neq 0, b=0$ & $T_{0 \times \ZZ}$ & $T_{0 \times \ZZ}$ & $T_{0 \times \ZZ}$ \\
 $a,b \neq 0$ & $T_{0^2}$ & $T_{0^2}$ & $T_{0^2}$ \\
 \hline
\end{tabular}
\end{center}
The orbits can be calculated from these data using the orbit-stabilizer theorem.
\end{example}

If $X_{\Sigma,\B}$ is a quasinormal toric supervariety with torus $T_{N,c}$ and $\sigma \in \Sigma$, we denote by $\Orb_c(\sigma) = T_{N,c} \cdot \p$ the $T_{N,c}$-orbit of a closed point $\p \in \Orb(\sigma)$. Recall that the (Zariski) closure of $\Orb_c(\sigma)$ in $X_{\Sigma,\B}$ is the smallest closed subvariety of $X_{\Sigma,\B}$ containing $\Orb_c(\sigma)$. We denote this closure by $X_{\Sigma,\B}(\sigma)$.

\begin{proposition}
    Using the above notation, $X_{\Sigma,\B}(\sigma)$ is a toric supervariety with torus $\Orb_c(\sigma)$. We have two cases:
    \begin{enumerate}
        \item If $\Orb_c(\sigma) \cong T_{N(\sigma)}$ as in the first case of the previous proposition, then $X_{\Sigma,\B}(\sigma)$ is the ordinary toric variety with fan $\bar \Sigma = \{\bar \tau \mid \sigma \leq \tau \}$ where $\bar \tau$ denotes the image of $\tau$ in $N(\sigma)_\RR = N_\RR / (N_\sigma)_\RR$.

        \item If $\Orb_c(\sigma) \cong T_{N(\sigma), \bar c}$ as in the second case of the previous proposition, then $X_{\Sigma,\B}(\sigma)$ is the toric supervariety with fan $\bar \Sigma$ as above, and decorations $\B_{\bar \tau} = \B_\tau \cap \sigma^\perp$ for $\tau \geq \sigma$.
    \end{enumerate}
\end{proposition}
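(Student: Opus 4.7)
The approach is to work affine-locally: for each $\tau \geq \sigma$, the orbit $\Orb_c(\sigma)$ is contained in $U_\tau = \Spec(\kk[S_\tau] \oplus \xi J^\tau)$ (since $\Orb(\sigma) \subseteq U_\sigma \subseteq U_\tau$), and $X_{\Sigma,\B}(\sigma) \cap U_\tau$ is cut out by the ideal $\I_\tau \subseteq \kk[S_\tau] \oplus \xi J^\tau$ of sections vanishing on the orbit. Since the $T_{N,c}$-action preserves the closure and the orbit is dense in it, $X_{\Sigma,\B}(\sigma)$ is automatically a toric supervariety with supertorus $\Orb_c(\sigma)$ via the orbit-stabilizer identification in the preceding proposition, and the remaining task is to compute its fan and decorations.

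The key algebraic input is that $\sigma^\perp \cap S_\tau$ is a face of the semigroup $S_\tau$ whenever $\sigma \leq \tau$: if $b + b'' \in \sigma^\perp$ with $b, b'' \in S_\tau \subseteq \check\sigma$, then pairing with any $u \in \sigma$ forces $\langle b, u \rangle = \langle b'', u \rangle = 0$. Choose a distinguished closed point $\p \in \Orb(\sigma) \subset U_\sigma$ whose ambient coordinates $\bar a_i$ in the embedding $U_\tau \hookrightarrow \kk^{r|s}$ satisfy $\bar a_i = 0$ iff $m_i \notin \sigma^\perp$. The face property implies $n_j = \sum_i p_i^j m_i \in \sigma^\perp$ precisely when each contributing $m_i \in \sigma^\perp$, equivalently when $\bar a^{p^j} \neq 0$. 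Substituting into the formula $(t \mid \xi) \cdot \p_A = (t^{m_i} \bar a_i \mid \bar a^{p^j} t^{n_j} \xi)$ from the preceding proposition shows that $\I_\tau$ contains the monomials $\{x^m : m \in S_\tau \setminus \sigma^\perp\}$ together with $\{x^b \xi : b \in \B_\tau \setminus \sigma^\perp\}$; the face property additionally guarantees that the resulting quotient $\kk[S_\tau \cap \sigma^\perp] \oplus \xi \cdot (\B_\tau \cap \sigma^\perp)\kk[S_\tau \cap \sigma^\perp]$ has a monomial basis that maps to the linearly independent elements $\bar a_i t^{m_i}$ and $\bar a^{p^j} t^{n_j} \xi$ in $\kk[\Orb_c(\sigma)] \cong \kk[\sigma^\perp \cap M, \xi]$, so this is precisely $\I_\tau$.

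For case 1, the orbit $T_{N(\sigma)}$ is purely even, so $\kk(X_{\Sigma,\B}(\sigma)) = \kk(\Orb_c(\sigma))$ has zero odd part; by integrality condition (iv) of Definition \ref{SupervarietyDefinition}, the odd structure sheaf of $X_{\Sigma,\B}(\sigma)$ injects into this trivial odd fraction field and hence vanishes, recovering the ordinary toric variety $X_{\bar\Sigma}$. For case 2, I verify $\bar c$-admissibility of $\B_{\bar\tau} := \B_\tau \cap \sigma^\perp$: given $m \in S_{\bar\tau} \setminus \ker \bar c$, since $m \in \sigma^\perp$ annihilates $N_\sigma$ one has $\langle m, c \rangle = \langle m, \bar c \rangle \neq 0$, so by $c$-admissibility of $\B_\tau$ there is $b \in \B_\tau$ with $b \leq_\tau m$, and the face property forces $b \in \sigma^\perp$. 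Conditions (ii) and (iii) for $\B_{\bar\tau}$ follow by the same face argument applied to compatibility and minimality of $\B$, and nonemptiness of each $\B_{\bar\tau}$ propagates from $\B_\sigma \cap \sigma^\perp \neq \emptyset$ (which characterizes case 2) via condition (ii) of $\B$. The main obstacle is the injectivity claim in the second paragraph: showing that the candidate quotient embeds faithfully into $\kk[\Orb_c(\sigma)]$. This reduces to the classical injection $\kk[S_\tau \cap \sigma^\perp] \hookrightarrow \kk[\sigma^\perp \cap M]$ for the even part, together with the orbit-stabilizer identification of the odd part with $\xi \cdot \kk[\sigma^\perp \cap M]$; gluing the local computations via Proposition \ref{LocalizeProp} then yields the claimed decorated fan structure globally.
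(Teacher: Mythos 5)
Your proof is correct and follows essentially the same route as the paper's: both work affine-locally on the patches $U_\tau$ for $\tau \geq \sigma$ and realize the orbit closure's coordinate ring as the quotient $\kk[S_\tau, \xi \B_\tau] \to \kk[S_\tau \cap \sigma^\perp, \xi(\B_\tau \cap \sigma^\perp)]$ killing the monomials outside $\sigma^\perp$, so that the decorations become $\B_\tau \cap \sigma^\perp$ (with case (a) reducing to the classical statement). The additional details you supply --- the face property of $\sigma^\perp \cap S_\tau$, the explicit computation of the vanishing ideal via the distinguished point, and the $\bar c$-admissibility check --- are left implicit in the paper, which instead cites the classical result.
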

\begin{proof}
    Part (a) follows immediately from the classical situation (\cite{CLS}, Proposition 3.2.7). We obtain the fan in part (b) likewise. The fact that $X_{\Sigma,\B}(\sigma)$ is a toric supervariety follows immediately from the definitions. It therefore remains to find the decorations of part (b).

    Since $X_{\Sigma,\B}(\sigma)$ is a closed subvariety of $X_{\Sigma,\B}$, it is affine-locally a quotient at the level of rings. That is, if $U_{\bar \tau} \subseteq X_{\Sigma,\B}(\sigma)$ is the affine open corresponding to $\bar \tau$ for $\tau \geq \sigma$, then $\kk[U_{\bar \tau, \B_{\bar \tau}}]$ is a quotient of $\kk[U_{\tau, B_\tau}]$. In particular, $\kk[U_{\tau, \B_\tau}] \to \kk[U_{\bar \tau, \B_{\bar \tau}}]$ is the surjection $\kk[S_\tau, \xi \B_\tau] \to \kk[S_\tau \cap \sigma^\perp, \xi (\B_\tau \cap \sigma^\perp)]$ given by mapping any $x^m$ with $m \notin \sigma^\perp$ to 0. Therefore $\B_{\bar \tau}$ can be viewed as $\B_\tau \cap \sigma^\perp$, so we are finished.
\end{proof}

\subsection{Fiber products}\label{FiberProductSection}
In this section, we determine when fiber products (i.e.\ pullbacks) exist in the category of quasinormal toric supervarieties of one odd dimension. Consider the diagram
\begin{center}
\begin{tikzcd}[row sep=large, column sep=large]
& (N', c', \Sigma', \B') \arrow[d, "{(\bar \phi', a')}"] \\
(N'', c'', \Sigma'', \B'') \arrow[r, swap, "{(\bar \phi'', a'')}"]
& (Q, d, \Lambda, \C)
\end{tikzcd}
\end{center}
in the category of decorated fans, and suppose $a'$ and $a''$ are not both 0. We construct the fiber product as follows:

Define a lattice $$N := N' \times_Q N'' = \{(n', n'') \in N' \times N'' \mid \bar \phi'(n') = \bar \phi''(n'')\}$$ and a fan $$\Sigma := \Sigma' \times_\Lambda \Sigma'' = \{(\sigma', \sigma'') \in \Sigma' \times \Sigma'' \mid \text{there is } \lambda \in \Lambda \text{ such that } \bar \phi'_\RR(\sigma'), \bar \phi''_\RR(\sigma'') \subseteq \lambda\}$$ so that $(N,\Sigma)$ is the fiber product of
\begin{center}
\begin{tikzcd}[row sep=large, column sep=large]
& (N', \Sigma') \arrow[d, "{\bar \phi'}"] \\
(N'', \Sigma'') \arrow[r, swap, "{\bar \phi''}"]
& (Q, \Lambda)
\end{tikzcd}
\end{center}
in the usual category of fans (or equivalently, toric varieties). Define $$c = c' \times_d c''  := ((a'')^2c',(a')^2c'')$$
and observe that
\begin{align*}
    \bar\phi'((a'')^2 c') &= (a' a'')^2 d \\
    &= \bar\phi''((a')^2 c''),
\end{align*}
so $c \in N_\kk$.

It remains to describe the decorations. Towards this end, we first describe the character lattice $M=N^*$. We also write $M'=(N')^*$, $M''=(N'')^*$, and $P=Q^*$. 
\begin{definition}
    Let $S$ be a semigroup contained in a lattice $L$. The \textbf{saturation} of $S$ in $L$ is given by $\Sat(S) = \{\ell \in L \mid k\ell \in S \text{ for some } k \in \NN\}$.
\end{definition}

We collect some elementary results about saturations of semigroups:

\begin{lemma}\label{SaturationLemma}
\begin{enumerate}
    \item If $S \subseteq L$ is a semigroup, then $\Sat(S)$ is the smallest saturated semigroup in $L$ that contains $S$.
    
    \item Let $L'$ be a sublattice of a lattice $L$. Then $L/L'$ is also a lattice if and only if $L'$ is saturated in $L$.

    \item The saturation of an affine semigroup is affine.
\end{enumerate}
\end{lemma}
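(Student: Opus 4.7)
My plan is to prove the three parts in order, with (a) and (b) following by direct semigroup/lattice manipulations and (c) reducing to Gordan's lemma after identifying $\Sat(S)$ with the lattice points in a rational polyhedral cone.

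For (a), I would check each property directly. The inclusion $S \subseteq \Sat(S)$ is immediate by taking $k=1$. Closure under addition uses the identity $k_1 k_2 (\ell_1 + \ell_2) = k_2(k_1 \ell_1) + k_1(k_2 \ell_2)$, which lies in $S$ whenever each $k_i \ell_i$ does. Saturation of $\Sat(S)$ itself is automatic: if $k'(k\ell) \in S$, rewriting gives $(k'k)\ell \in S$. Finally, any saturated semigroup $T \supseteq S$ must contain $\Sat(S)$, since $k\ell \in S \subseteq T$ together with $T$ saturated forces $\ell \in T$. For (b), I would invoke the structure theorem for finitely generated abelian groups: the finitely generated group $L/L'$ is free (equivalently, a lattice) if and only if it is torsion-free. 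Torsion in $L/L'$ corresponds precisely to an element $\ell \in L \setminus L'$ with $k\ell \in L'$ for some positive integer $k$, which is the exact failure of $L'$ to be saturated in $L$.

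For (c), the key step is the identification $\Sat(S) = \Cone(S) \cap \tilde L$, where $\tilde L \subseteq L$ denotes the saturation of the sublattice $\ZZ S$ (a lattice by part (b), of the same rank as $\ZZ S$). Since $S$ is finitely generated, $\Cone(S) \subseteq \tilde L \otimes \RR$ is a rational polyhedral cone. The inclusion $\Sat(S) \subseteq \Cone(S) \cap \tilde L$ is routine: $k\ell \in S$ gives $\ell = \tfrac{1}{k}(k\ell) \in \Cone(S)$ and $\ell \in \tilde L$ because $k\ell \in \ZZ S$. For the reverse inclusion, given $\ell \in \Cone(S) \cap \tilde L$, I would first use membership in $\tilde L$ to pick $k_1 > 0$ with $k_1 \ell \in \ZZ S$, then invoke the rationality of $\Cone(S)$ (via Farkas' lemma or the structure theorem for rational polyhedra) to write $k_1 \ell$ as a $\QQ_{\geq 0}$-combination of the generators of $S$, and clear denominators to produce a positive integer multiple of $\ell$ inside $S$. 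Once this identity is established, Gordan's lemma applied to $\Cone(S)$ as a rational polyhedral cone in $\tilde L_\RR$ supplies a finite generating set, so $\Sat(S)$ is affine.

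The principal obstacle is the rationality step in the reverse inclusion of (c): upgrading a real nonnegative combination of generators to a rational one when the represented point itself lies in $\ZZ S$. This is a standard consequence of the theory of rational polyhedra, but it is the one place where pure semigroup manipulation does not suffice and an appeal to convex geometry is needed.
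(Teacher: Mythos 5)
Your proof is correct. The paper records Lemma \ref{SaturationLemma} without proof (it is presented as a collection of elementary facts), so there is no argument of the author's to compare against; your write-up is a complete and accurate filling-in of the standard one. Parts (a) and (b) are exactly the expected manipulations; note only that ``saturated semigroup in $L$'' in (a) must be read as saturation relative to the ambient lattice $L$ (not relative to the group generated by the semigroup, under which reading the statement would fail), and that the $k$ in the definition of $\Sat$ is implicitly positive --- both clearly the intended conventions, and your argument uses them correctly. For (c), the identification $\Sat(S) = \Cone(S) \cap \tilde L$ followed by Gordan's lemma is the standard route (compare the analogous statement for saturation inside $\ZZ S$ in \cite{CLS}), and the rationality step you flag --- that a point of $\ZZ S$ lying in $\Cone(S)$ is a $\QQ_{\geq 0}$-combination of the generators --- is indeed the one place where convex geometry enters; it follows, as you say, from Carath\'eodory (pass to a simplicial subcone spanned by linearly independent generators) together with the fact that a linear system with rational data and a solution over $\RR_{\geq 0}$ has one over $\QQ_{\geq 0}$. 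Two cosmetic remarks: freeness of $\tilde L$ does not require part (b), since any subgroup of the free abelian group $L$ of finite rank is free; and the degenerate case where clearing denominators yields $0$ occurs only for $\ell = 0$, which is harmless since affine semigroups here contain the identity.
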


We may now apply this lemma to describe $M$.

\begin{lemma}
\begin{enumerate}
    \item The natural surjection $\pi : M' \times M'' \to M$ has kernel $\Sat\big( (\bar \phi')^* \times (-\bar \phi'')^*(P) \big)$, so that $M \cong (M' \times M'')/ \Sat\big( (\bar \phi')^* \times (-\bar \phi'')^*(P) \big)$.

    \item If $\sigma = (\sigma', \sigma'') \in \Sigma$, then $S_\sigma$ is the saturation of $\pi(S_{\sigma'} \times S_{\sigma''})$.
\end{enumerate}
\end{lemma}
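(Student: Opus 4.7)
For part (a), my plan is to realize $N$ as the kernel of the map $N' \times N'' \to Q$, $(n', n'') \mapsto \bar\phi'(n') - \bar\phi''(n'')$, giving a short exact sequence $0 \to N \to N' \times N'' \to Q_0 \to 0$ where $Q_0 \subseteq Q$ is the image. Because $Q_0$ is torsion-free, applying $\Hom(-, \ZZ)$ produces a dual short exact sequence, which simultaneously proves $\pi$ is surjective and identifies $\ker \pi$ with $Q_0^*$; the latter is automatically saturated in $M' \times M''$ as the kernel of a map to a lattice. To match this with the claimed description, I will define $\psi : P \to M' \times M''$ by $p \mapsto ((\bar\phi')^*(p), -(\bar\phi'')^*(p))$ and verify $\psi(P) \subseteq \ker \pi$ directly. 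The reverse inclusion is handled rationally: given $(m', m'') \in (\ker\pi)_\QQ$, the assignment $\bar\phi'(n') + \bar\phi''(n'') \mapsto \langle m', n'\rangle - \langle m'', n''\rangle$ is well-defined on $(Q_0)_\QQ$ precisely because $(m', m'')$ annihilates $N_\QQ$, and any $\QQ$-linear extension to $P_\QQ$ pulls back to $(m', m'')$ under $\psi_\QQ$. Combining $\psi(P)_\QQ = (\ker\pi)_\QQ$ with saturatedness of $\ker \pi$ yields the stated description.

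For part (b), the crux is the convex-geometric identity $\check \sigma = \pi_\RR(\check{\sigma'} \times \check{\sigma''})$ in $M_\RR$. The $\supseteq$ direction follows immediately from pairing compatibility. The $\subseteq$ direction I will prove by Hahn-Banach separation: if some $m \in \check\sigma$ lay outside the closed polyhedral cone $\pi_\RR(\check{\sigma'} \times \check{\sigma''})$, there would exist $u \in N_\RR$ with $\langle u, m\rangle < 0$ and $\langle u, \cdot\rangle \geq 0$ on $\check{\sigma'} \times \check{\sigma''}$; this forces $u \in (\check{\sigma'} \times \check{\sigma''})^\vee \cap N_\RR = \sigma$ by biduality for polyhedral cones, contradicting $m \in \check\sigma$. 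With this identity, the containment $\pi(S_{\sigma'} \times S_{\sigma''}) \subseteq S_\sigma$ is immediate. For the reverse saturation, given $m \in S_\sigma$, the preimage $\pi_\RR^{-1}(m) \cap (\check{\sigma'} \times \check{\sigma''})$ is a nonempty rational polyhedron, hence contains a rational point $(m'_\QQ, m''_\QQ)$; clearing denominators produces an integer $k \geq 1$ with $k(m'_\QQ, m''_\QQ) \in S_{\sigma'} \times S_{\sigma''}$ mapping to $km$, so $m \in \Sat(\pi(S_{\sigma'} \times S_{\sigma''}))$.

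The main obstacle is this dual-cone identity in part (b), which rests on a separation argument in convex geometry requiring the image cone $\pi_\RR(\check{\sigma'} \times \check{\sigma''})$ to be closed (valid because it is the linear image of a polyhedral cone, by the Minkowski-Weyl theorem). Once it is established, the remainder is essentially diagram-chasing through a short exact sequence in (a) and rational approximation of real preimages in (b).
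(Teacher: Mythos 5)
Your proposal is correct, and it is worth comparing with the paper's argument because the two diverge on part (b). For part (a) you follow essentially the paper's outline (show $(\bar\phi')^*\times(-\bar\phi'')^*(P)\subseteq\ker\pi$ by the pairing computation, note $\ker\pi$ is saturated, conclude equality), but you supply two things the paper glosses over: surjectivity of $\pi$, which the paper simply asserts and which you get from dualizing $0\to N\to N'\times N''\to Q_0\to 0$ using that $Q_0$ is free so $\mathrm{Ext}^1(Q_0,\ZZ)=0$, and the reverse inclusion, where the paper says "counting dimensions" and you instead factor a rational functional through $(Q_0)_\QQ$ and clear denominators (your sign conventions work out, since $\bar\phi'(n')+\bar\phi''(n'')=\bar\phi'(n')-\bar\phi''(-n'')$ sweeps out the same image). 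For part (b) the paper argues categorically: $\Sat(\pi(S_{\sigma'}\times S_{\sigma''}))$ is a saturated affine semigroup inside $S_\sigma$, and the universal property of the fiber product of toric varieties forces the induced endomorphism of $\kk[S_\sigma]$ to be the identity, so the inclusion is an isomorphism. You instead prove the convex-geometric identity $\check\sigma=\pi_\RR(\check{\sigma'}\times\check{\sigma''})$ by separation (using that linear images of polyhedral cones are polyhedral, hence closed, and biduality), then produce a rational point of the fiber polyhedron $\pi_\RR^{-1}(m)\cap(\check{\sigma'}\times\check{\sigma''})$ and clear denominators; together with saturatedness of $S_\sigma=\check\sigma\cap M$ (which you should state explicitly to get $\Sat(\pi(S_{\sigma'}\times S_{\sigma''}))\subseteq S_\sigma$, though it is immediate) this gives equality. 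The paper's route is shorter if one takes the classical toric fiber-product statement as known; yours is self-contained, makes the cone-level mechanism explicit, and does not presuppose any universal property of the glued construction, which is arguably cleaner in a section whose purpose is to establish that universal property.
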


\begin{proof}
\begin{enumerate}
    \item We have a surjection $\pi : M' \times M'' \to M$ given by $\langle \pi(m', m''), (n', n'') \rangle = \langle m', n' \rangle + \langle m'', n'' \rangle$ for $(m',m'') \in M' \times M''$ and $(n',n'') \in N$. Notice that
    \begin{align*}
        &\langle \pi(m' + (\bar \phi')^*(p), m'' - (\bar \phi'')^*(p)), (n', n'')\rangle \\
        &= \langle m', n' \rangle + \langle m'', n'' \rangle + \langle (\bar \phi')^*(p), n' \rangle - \langle (\bar \phi')^*(p), n' \rangle \\
        &= \langle m', n' \rangle + \langle m'', n'' \rangle + \langle p, \bar \phi'(n') \rangle - \langle p, \bar \phi''(n'') \rangle \\
        &= \langle m', n' \rangle + \langle m'', n'' \rangle
    \end{align*}
    since $\phi'(n') = \phi''(n'')$, so $\ker \pi$ contains $(\bar \phi')^* \times (-\bar \phi'')^*(P)$. But $M$ is a lattice, so by Lemma \ref{SaturationLemma}(b) we know $\ker\pi$ is saturated and hence by (a), $\ker \pi \supseteq \Sat\big( (\bar \phi')^* \times (-\bar \phi'')^*(P) \big)$. Equality follows by counting dimensions.

    \item We first note that $S_\sigma \supseteq \pi(S_{\sigma'} \times S_{\sigma''})$ because if $(m', m'') \in S_{\sigma'} \times S_{\sigma''}$, then both summands of $\langle \pi(m', m''), (n', n'') \rangle = \langle m', n' \rangle + \langle m'', n'' \rangle$ are nonnegative for all $(n', n'') \in \sigma$. Then $\Sat(\pi(S_{\sigma'} \times S_{\sigma''})) \subseteq S_\sigma$ is a saturated semigroup which is affine by Lemma \ref{SaturationLemma}(c). Hence by the universal property of the fiber product, the following diagram commutes:
    \begin{center}
    \begin{tikzcd}[row sep=large, column sep=large]
        \kk[\Sat(\pi (S_{\sigma'} \times S_{\sigma''}))] \arrow[dr, bend left]
         & & \\
        & \kk[S_\sigma] \arrow[ul, bend left, dashed] 
        & \kk[S_{\sigma'}] \arrow[l]
            \arrow[ull, bend right]  \\
        & \kk[S_{\sigma''}] \arrow[u]
            \arrow[uul, bend left]
        & \kk[S_\lambda] \arrow[l, "(\phi'')^*"] \arrow[u, swap, "{(\phi')^*}"]
    \end{tikzcd}
    \end{center}
    In this diagram, the dashed arrow arises from the universal property of the fiber product in the (opposite) category of toric varieties, and the unlabelled solid arrows arise from the natural morphisms of semigroups. The composition of the dashed arrow with the arrow in the reverse direction is the unique endomorphism of $\kk[S_\sigma]$ satisfying the universal property. Since $\Sat(\pi(S_{\sigma'} \times S_{\sigma''})) \subseteq S_\sigma$, these arrows are isomorphisms. Hence $S_\sigma = \Sat(\pi (S_{\sigma'} \times S_{\sigma''}))$. \qedhere
\end{enumerate}
\end{proof}

We now describe the decorations. For each $\sigma = (\sigma', \sigma'') \in \Sigma$, let 
$$J^\sigma = \begin{cases}
    J_c^\sigma + ((\B'_{\sigma'} \times \{0\}) \cup (\{0\} \times \B_{\sigma''}'')) &\text{ if } a',a'' \neq 0 \\
    J_c^\sigma + (\B'_{\sigma'} \times \{0\}) &\text{ if } a'=0 \\
    J_c^\sigma + (\{0\} \times \B_{\sigma''}'') &\text{ if } a''=0
\end{cases}$$
and define $\B_\sigma = \B'_{\sigma'} \times_\C \B''_{\sigma''}$ as the minimal generating set of $J^\sigma$. We write $\B' \times_\C \B''$ for the decorations $\{\B'_{\sigma'} \times_\C \B''_{\sigma''}\}_{(\sigma', \sigma'') \in \Sigma}$.
\begin{lemma}
    The collection $\B = \B' \times_\C \B''$ defined above is $c$-admissible.
\end{lemma}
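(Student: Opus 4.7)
The plan is to verify each of the three conditions of Definition \ref{DecoratedFanDef} for $\B = \B' \times_\C \B''$, using the admissibility of $\B'$ and $\B''$ together with Proposition \ref{LocalizeProp}. Conditions (iii) and (i) are immediate: by construction $\B_\sigma$ is the minimal monomial generating set of $J^\sigma$, and in each of the three defining cases $J^\sigma$ contains $J_c^\sigma$, so every $x^m$ with $m \in S_\sigma - \ker c$ lies in $J^\sigma$ and is therefore divisible (in the sense of $\leq_\sigma$) by some element of $\B_\sigma$.

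The substantive step is condition (ii). My strategy is to prove the localization identity $(J^\sigma)_{x^{m_{\sigma,\tau}}} = J^\tau$ for every face $\tau$ of $\sigma$ in $\Sigma$, which by part (b) of the lemma characterizing admissibility in terms of ideals (immediately following Definition \ref{DecoratedFanDef}) is equivalent to condition (ii). Since localization commutes with sums of ideals, this reduces to verifying the identity for each constituent piece in the definition of $J^\sigma$: the $J_c^\sigma$ piece is handled by Proposition \ref{LocalizeProp}(b), and the ideal generated in $\kk[S_\sigma]$ by $\pi(\B'_{\sigma'} \times \{0\})$ must localize to the ideal generated by $\pi(\B'_{\tau'} \times \{0\})$ in $\kk[S_\tau]$, with the analogous statement for $\pi(\{0\} \times \B''_{\sigma''})$ when that piece is present.

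For the latter, I first note that every face $\tau$ of $\sigma = (\sigma', \sigma'')$ in the fiber product fan $\Sigma$ has product form $\tau = (\tau', \tau'')$ with $\tau' \leq \sigma'$ and $\tau'' \leq \sigma''$, as is standard for fiber products of fans. Choosing $m_{\sigma', \tau'} \in S_{\sigma'}$ and $m_{\sigma'', \tau''} \in S_{\sigma''}$ that cut out $\tau'$ in $\sigma'$ and $\tau''$ in $\sigma''$, the element $m_{\sigma, \tau} := \pi(m_{\sigma',\tau'}, 0) + \pi(0, m_{\sigma'',\tau''}) \in S_\sigma$ cuts out $\tau$ inside $\sigma$, and inverting $x^{m_{\sigma,\tau}}$ in $\kk[S_\sigma]$ corresponds, via the ring map $\kk[S_{\sigma'}] \to \kk[S_\sigma]$, $x^{m'} \mapsto x^{\pi(m', 0)}$, to the localization $\kk[S_{\sigma'}] \to \kk[S_{\tau'}]$. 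Hence the extension of $J'^{\sigma'}$ to $\kk[S_\sigma]$ localizes to the extension of $J'^{\tau'} = (J'^{\sigma'})_{x^{m_{\sigma',\tau'}}}$, which by $c'$-admissibility of $\B'$ is generated by $\pi(\B'_{\tau'} \times \{0\})$ in $\kk[S_\tau]$; an analogous argument handles the $\B''$ piece.

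The main obstacle will be justifying that the saturation step in $S_\sigma = \Sat(\pi(S_{\sigma'} \times S_{\sigma''}))$ does not obstruct the localization identity, and that faces in the fiber product fan are indeed products of faces. Once these structural facts are established, the three cases in the definition of $J^\sigma$ all follow the same pattern, with the $a' = 0$ or $a'' = 0$ cases simply omitting one of the two contributions.
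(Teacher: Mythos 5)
Your proposal is correct and takes essentially the same route as the paper: conditions (i) and (iii) are immediate from the construction of $J^\sigma$ and of $\B_\sigma$ as its minimal generating set, and condition (ii$'$) is verified by splitting $J^\sigma$ into $J^\sigma_c$ (handled by Proposition \ref{LocalizeProp}(b)) plus the ideals coming from $\B'_{\sigma'}$ and $\B''_{\sigma''}$ (handled by the admissibility of $\B'$ and $\B''$), which is exactly what your localization identity $(J^\sigma)_{x^{m_{\sigma,\tau}}}=J^\tau$ expresses via the lemma following Definition \ref{DecoratedFanDef}. The two structural points you flag (faces of fiber-product cones being products of faces, and the saturation step) are used silently in the paper as well and cause no real obstruction, since your argument only needs that the semigroup maps $S_{\sigma'}\to S_\sigma$ and $S_{\tau'}\to S_\tau$ are well defined and that $x^{\pi(m_{\sigma',\tau'},0)}$ becomes invertible in $\kk[S_\tau]$.
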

\begin{proof}
    It suffices to show that condition (ii$'$) holds. Let $\tau = (\tau', \tau'')$ be a face of $\sigma = (\sigma', \sigma'')$. Let $m \in J^\sigma$, so either $m \in J^\sigma_c$ or $m$ belongs to the ideal generated by $\B'_{\sigma'} \times \{0\}$ or $\{0\} \times \B_{\sigma''}''$. In the former case, condition (ii$'$) holds because $(J^\sigma_c)_{x^{m_{\sigma, \tau}}} = J^\tau_c$. In the latter case, we apply condition (ii$'$) of $\B'$ or $\B''$ to arrive at the result. The other half of the argument follows by the same reasoning.
\end{proof}

\begin{lemma}
    Let $\bar\psi' : N \to N'$ and $\bar\psi'' : N \to N''$ be the natural maps. Then $(\bar\psi', a'')$ and $(\bar\psi'', a')$ are compatible with the decorated fans.
\end{lemma}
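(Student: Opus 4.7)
The plan is to check the three conditions from the definition of compatibility with decorated fans for $(\bar\psi',a'')$; the case of $(\bar\psi'',a')$ will follow by the symmetric argument exchanging the roles of the primed and doubly-primed data. For fan compatibility, any cone $\sigma = (\sigma',\sigma'') \in \Sigma$ is by construction contained in $(\sigma' \times \sigma'') \cap N_\RR$, so its image under the first projection $\bar\psi'_\RR$ is contained in $\sigma' \in \Sigma'$. For the scalar relation, $\bar\psi'_\kk(c) = (a'')^2 c'$ is immediate from the definition $c = ((a'')^2 c', (a')^2 c'')$; one verifies in passing that this formula does define an element of $N_\kk$ since $\bar\phi'_\kk((a'')^2 c') = (a'a'')^2 d = \bar\phi''_\kk((a')^2 c'')$, using the hypotheses on $(\bar\phi',a')$ and $(\bar\phi'',a'')$.

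The main content is decoration compatibility in the case $a'' \neq 0$. I must show that for every $\sigma = (\sigma', \sigma'') \in \Sigma$ and every $b' \in \B'_{\sigma'}$, there exists some $b \in \B_\sigma$ with $b \leq_\sigma (\bar\psi')^*(b')$. Under the description $M = (M' \times M'')/\Sat((\bar\phi')^* \times (-\bar\phi'')^*(P))$ from the preceding lemma, the dual map $(\bar\psi')^*: M' \to M$ sends $b'$ to the class of $(b',0)$; this class actually lies in $S_\sigma$ because $\pi(S_{\sigma'} \times S_{\sigma''}) \subseteq S_\sigma$. By the defining formula for $J^\sigma$, whenever $a'' \neq 0$, whether or not $a' = 0$, the generating set of $J^\sigma$ contains $\B'_{\sigma'} \times \{0\}$ interpreted via $\pi$. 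Hence $(\bar\psi')^*(b') \in J^\sigma$, and since $\B_\sigma$ was chosen as a generating set for $J^\sigma$, some $b \in \B_\sigma$ divides $(\bar\psi')^*(b')$ in $S_\sigma$, i.e.\ $b \leq_\sigma (\bar\psi')^*(b')$.

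There is no genuine obstacle here: the parameter $c$ on the fiber product and the ideals $J^\sigma$ defining $\B = \B' \times_\C \B''$ were engineered precisely so that these compatibilities hold. The only subtle bookkeeping point is recognizing why the decorations of the \emph{first} factor appear in $J^\sigma$ exactly when $a'' \neq 0$ (rather than when $a' \neq 0$): the scalar $a''$ is what pairs with the projection $\bar\psi'$ in the scalar relation of condition (iii) of Lemma \ref{MorphismOfSupertoriLemma}, so decoration compatibility for $(\bar\psi',a'')$ is exactly controlled by the inclusion of $\B'_{\sigma'} \times \{0\}$ among the generators of $J^\sigma$.
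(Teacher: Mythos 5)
Your proof is correct and follows essentially the same route as the paper: decoration compatibility for $(\bar\psi', a'')$ reduces to the observation that when $a'' \neq 0$ the ideal $J^\sigma$ contains the ideal generated by $\B'_{\sigma'} \times \{0\} = (\bar\psi')^*(\B'_{\sigma'})$, so each $(\bar\psi')^*(b')$ is divisible by some element of the generating set $\B_\sigma$, i.e.\ $b \leq_\sigma (\bar\psi')^*(b')$ for some $b \in \B_\sigma$. Your pairing of the first-factor decorations with the condition $a'' \neq 0$ (and of the second factor with $a' \neq 0$) matches the displayed case definition of $J^\sigma$ and is exactly what the lemma requires.
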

\begin{proof}
Let $\sigma = (\sigma', \sigma'') \in \Sigma$. It suffices to show that $(\psi')^*(\xi' J^{\sigma'}) \subseteq \xi J^\sigma$ and $(\psi'')^*(\xi'' J^{\sigma''}) \subseteq \xi J^\sigma$. But the images of $J^{\sigma'}$ and $J^{\sigma''}$ are exactly the ideals generated by $\B'_{\sigma'} \times \{0\}$ and $\{0\} \times \B_{\sigma''}''$, respectively. These generators are present in $J^\sigma$ exactly when $a' \neq 0$ and $a'' \neq 0$, respectively, so the lemma follows.
\end{proof}

The above lemmas provide the data sufficient to describe the fiber product.

\begin{proposition}
    If $a'$ and $a''$ are not both 0, the fiber product is given by
    \begin{center}
    \begin{tikzcd}[row sep=large, column sep=large]
    (N, c, \Sigma, \B) \arrow[r, "{(\bar\psi', a'')}"] \arrow[d, swap, "{(\bar\psi'', a')}"] & (N', c', \Sigma', \B') \arrow[d, "{(\bar \phi', a')}"] \\
    (N'', c'', \Sigma'', \B'') \arrow[r, swap, "{(\bar \phi'', a'')}"]
    & (Q, d, \Lambda, \C)
    \end{tikzcd}
    \end{center}
    where
    \begin{align*}
        N &= N' \times_Q N'' \\
        c &= c' \times_d c'' \\
        \Sigma &= \Sigma' \times_\Lambda \Sigma'' \\
        \B &= \B' \times_\C \B'' \\
        \bar\psi' &: N \to N' \text{ and } \bar\psi'' : N \to N'' \text{ are the natural maps}.
    \end{align*}
\end{proposition}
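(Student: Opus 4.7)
The plan is to verify the universal property of a pullback in the category of decorated fans. I would first confirm that the proposed square commutes: $\bar\phi' \circ \bar\psi' = \bar\phi'' \circ \bar\psi''$ comes from the lattice fiber product construction, the scalars trivially satisfy $a' \cdot a'' = a'' \cdot a'$, and the required identity $\bar\phi'_\kk((a'')^2 c') = (a'a'')^2 d = \bar\phi''_\kk((a')^2 c'')$ uses that $(\bar\phi', a')$ and $(\bar\phi'', a'')$ are themselves morphisms. That $\B = \B' \times_\C \B''$ is $c$-admissible and that the projections $(\bar\psi', a'')$ and $(\bar\psi'', a')$ are morphisms of decorated fans have already been established in the two preceding lemmas, so the only remaining content is the universal property.

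For that, suppose we are given a decorated fan $(N''', c''', \Sigma''', \B''')$ with morphisms $(\bar f', b')$ and $(\bar f'', b'')$ into $(N', c', \Sigma', \B')$ and $(N'', c'', \Sigma'', \B'')$ whose compositions with $(\bar\phi', a')$ and $(\bar\phi'', a'')$ agree. Extracting the underlying equality yields $\bar\phi' \circ \bar f' = \bar\phi'' \circ \bar f''$ and the scalar equation $a'b' = a''b''$. The classical universal property of the fiber product of fans supplies a unique lattice map $\bar f : N''' \to N$ compatible with fans and satisfying $\bar\psi' \circ \bar f = \bar f'$ and $\bar\psi'' \circ \bar f = \bar f''$. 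Since $a'$ and $a''$ are not both zero, I define the scalar $b \in \kk$ by $b = b'/a''$ if $a'' \neq 0$ and otherwise $b = b''/a'$; when both are nonzero these formulas agree via $a'b' = a''b''$. By construction $a''b = b'$ and $a'b = b''$, from which $b$ is seen to be unique. For the pairing identity $\bar f_\kk(c''') = b^2 c$, I would compose with the injection $N_\kk \hookrightarrow N'_\kk \times N''_\kk$: on the first coordinate, $\bar\psi'_\kk(b^2 c) = (b a'')^2 c' = (b')^2 c' = \bar f'_\kk(c''') = \bar\psi'_\kk(\bar f_\kk(c'''))$, and similarly on the second.

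The principal obstacle is checking that $(\bar f, b)$ is compatible with the decorations of $(N, c, \Sigma, \B)$. If $b = 0$, no check is required beyond fan compatibility. Otherwise, for each $\sigma''' \in \Sigma'''$ and $\sigma = (\sigma', \sigma'') \in \Sigma$ with $\bar f_\RR(\sigma''') \subseteq \sigma$, I must show that each generator of $J^\sigma$ pulls back into $J'''_{\sigma'''}$. The generators fall into three families: elements of $J_c^\sigma$, elements $\pi(m', 0)$ with $m' \in \B'_{\sigma'}$ (present only when $a'' \neq 0$), and elements $\pi(0, m'')$ with $m'' \in \B''_{\sigma''}$ (present only when $a' \neq 0$). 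The first family is handled by Lemma \ref{JcPreservedLemma}, giving $\bar f^*(J_c^\sigma) \subseteq J_{c'''}^{\sigma'''} \subseteq J'''_{\sigma'''}$. For the second, whenever it is present we have $b' = a'' b \neq 0$, so the decoration-compatibility condition on $(\bar f', b')$ is nontrivial and applies, producing $\bar f^*(\pi(m', 0)) = (\bar f')^*(m') \in J'''_{\sigma'''}$. The third family is symmetric via $(\bar f'', b'')$. Uniqueness of the factorization follows from the uniqueness of $\bar f$ and $b$ already established, completing the verification of the universal property.
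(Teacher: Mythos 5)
Your proposal is correct and follows essentially the same route as the paper's proof: commutativity plus the two preceding lemmas, the classical fiber product of fans for the unique lattice map, the scalar defined by dividing by whichever of $a',a''$ is nonzero, the identity $\bar f_\kk(c''')=b^2c$ checked coordinatewise in $N'_\kk\times N''_\kk$, and decoration compatibility verified on the three families of generators of $J^\sigma$ via Lemma \ref{JcPreservedLemma} and the (nontrivial, since the relevant scalars are nonzero) decoration compatibility of the test morphisms. Your handling of the scalars ($b'=a''b\neq0$, etc.) is in fact slightly cleaner than the paper's phrasing of the same step.
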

\begin{proof}
    We have already seen that the diagram commutes in the category of decorated fans. Suppose we have a commutative diagram
    \begin{center}
    \begin{tikzcd}
        (\tilde N, \tilde c, \tilde \Sigma, \tilde \B)
        \arrow[drr, bend left,"{(\tilde \psi', \tilde a'')}"]
        \arrow[ddr, bend right,swap, "{(\tilde \psi'', \tilde a')}"] & & \\
        & (N,c,\Sigma,\B) \arrow[r, "{(\bar\psi', a'')}"] \arrow[d, swap, "{(\bar\psi'', a')}"]
        & (N', c', \Sigma', \B') \arrow[d, "{(\bar\phi', a')}"] \\
        & (N'', c'', \Sigma'', \B'') \arrow[r, swap, "{(\bar\phi'', a'')}"]
        & (Q,d,\Lambda,\C)
    \end{tikzcd}
    \end{center}
    so there is a unique map $\tilde\varphi : \tilde N \to N$ compatible with the fans $\tilde \Sigma$ and $\Sigma$.
    
    If $a' \neq 0$, then define $\tilde a = \frac{\tilde a'}{a'}$. Likewise if $a'' \neq 0$, we define $\tilde a = \frac{\tilde a''}{a''}$. If both are nonzero, we observe that $a' \tilde a'' = \tilde a' a''$ and hence $\tilde a$ is well-defined, and unique with respect to the property that $\tilde a a' = \tilde a'$ and $\tilde a a'' = \tilde a''$. Thus if $$(\tilde \varphi, \tilde a) : (\tilde N, \tilde c, \tilde \Sigma, \tilde \B) \to (N,c,\Sigma,\B)$$ is a morphism of decorated fans, it satisfies the universal property of the fiber product in the above diagram. We have
    \begin{align*}
        \tilde \varphi_\kk(\tilde c) &= ((\tilde a'')^2 c', (\tilde a')^2, c'') \\
        &= \tilde a^2 ((a'')^2 c', (a')^2, c'') \\
        &= \tilde a^2 c,
    \end{align*}
    so it remains to show that if $\tilde a \neq 0$, then $\tilde \varphi$ is compatible with the decorations.

    Suppose $\tilde \varphi_\RR(\tilde \sigma) \subseteq \sigma$. Looking at the affine patches defined by $\tilde \sigma$ and $\sigma = (\sigma', \sigma'')$, we obtain the diagram
    \begin{center}
    \begin{tikzcd}[row sep=large, column sep=large]
        \kk[S_{\tilde \sigma}, \tilde \xi \tilde \B_{\tilde \sigma}]
         & & \\
        & \kk[S_\sigma, \xi \B_\sigma]
        & \kk[S_{\sigma'}, \xi' \B'_{\sigma'}] \arrow[l, swap, "{(\psi')^*}"]
            \arrow[ull, swap, bend right, "{(\tilde \psi')^*}"]  \\
        & \kk[S_{\sigma''}, \xi'' \B''_{\sigma''}] \arrow[u, "{(\psi'')^*}"]
            \arrow[uul, bend left, "{(\tilde \psi')^*}"]
        & \kk[S_\lambda, \eta \C_\lambda] \arrow[l, "(\phi'')^*"] \arrow[u, swap, "{(\phi')^*}"]
    \end{tikzcd}
    \end{center}

    Let $m \in J^\sigma$, where $\tilde \varphi_\RR(\tilde \sigma) \subseteq \sigma$. Then either $m \in J^\sigma_c$, or $m \in (\B'_{\sigma'} \times \{0\})$ and $a''\neq 0$, or $m \in (\{0\} \times \B''_{\sigma''})$ and $a' \neq 0$. If $m \in J_c^\sigma$, then $\varphi^*(m) \in J_{\tilde c}^{\tilde \sigma}$ by Lemma \ref{JcPreservedLemma}. Otherwise we may assume without loss of generality that $m \in (\B'_{\sigma'} \times \{0\})$ and $a'' \neq 0$. Then there is $b \in \B'_{\sigma'}$ such that $b \leq_\sigma m$. It follows that $(\tilde \psi')^*(x^b \xi')$ is nonzero in $\kk[S_{\tilde \sigma}, \tilde \xi \tilde \B_{\tilde \sigma}]$ since $a'' \neq 0$. Hence $(\tilde \psi')^*(b) \leq_{\tilde \sigma} \tilde \varphi^*(m)$ and $(\tilde \psi')^*(b) \in J^{\tilde \sigma}$, so indeed $\tilde \varphi^*(m) \in J^{\tilde \sigma}$, as desired. Therefore $\tilde \varphi$ is compatible with the decorations, and we are finished.
\end{proof}

\begin{remark}
    If $a'=a''=0$, then the fiber product does not exist in general. In particular, commutativity of the diagram 
    \begin{center}
    \begin{tikzcd}
        (\tilde N, \tilde c, \tilde \Sigma, \tilde \B)
        \arrow[drr, bend left,"{(\tilde \psi', \tilde b')}"]
        \arrow[ddr, bend right,swap, "{(\tilde \psi'', \tilde b'')}"] & & \\
        & (N,c,\Sigma,\B) \arrow[r, "{(\bar\psi', b')}"] \arrow[d, swap, "{(\bar\psi'', b'')}"]
        & (N', c', \Sigma', \B') \arrow[d, "{(\bar\phi', 0)}"] \\
        & (N'', c'', \Sigma'', \B'') \arrow[r, swap, "{(\bar\phi'', 0)}"]
        & (Q,d,\Lambda,\C)
    \end{tikzcd}
    \end{center}
    does not guarantee the existence of $\tilde b$ such that $\tilde b' = \tilde b b'$ and $\tilde b'' = \tilde b b''$. 
\end{remark}

\bibliographystyle{hsiam}
\bibliography{refs}

\end{document}